\numberwithin{equation}{section}
\theoremstyle{plain}
\newtheorem{theorem}[equation]{Theorem}
\newtheorem{corollary}[equation]{Corollary}
\newtheorem{lemma}[equation]{Lemma}
\newtheorem{proposition}[equation]{Proposition}
\theoremstyle{definition}
\newtheorem{definition}[equation]{Definition}
\newtheorem*{remark*}{Remark}
\newtheorem{step}{Step}
\newtheorem{substep}{Step}[step]
\newcommand{\myquote}[1]{\refstepcounter{equation} \indent\parbox{\textwidth - 5\parindent}{\bigskip\em #1}\hfill(\theequation)\bigskip}
\newcommand\numberthis{\addtocounter{equation}{1}\tag{\theequation}} 
\newcommand{\RR}{\mathbb{R}}
\newcommand{\CC}{\mathbb{C}}
\newcommand{\QQ}{\mathbb{Q}}
\newcommand{\ZZ}{\mathbb{Z}}
\newcommand{\PP}{\mathbb{P}}
\newcommand{\BB}{\mathbb{B}}
\newcommand{\OO}{\mathcal{O}}
\newcommand{\GL}{\mathrm{GL}}
\renewcommand{\Im}[1]{\mathrm{Im\,} #1}
\renewcommand{\Re}[1]{\mathrm{Re\,} #1}
\newcommand{\vol}{\mathrm{vol}}
\newcommand{\HH}{\mathbb{H}}
\newcommand{\trace}{\mathrm{tr}}
\newcommand{\mult}{\mathrm{mult}}
\newcommand{\ta}{\mathrm{ta}}
\begin{document}

\title  {Curves on Compact Arithmetic Quotients of Hyperbolic 2-ball}
\author{Zhehao Li}
\date{}
\maketitle

\tableofcontents

\section{Introduction}

Complex hyperbolic $n$-ball $\BB^n$ is a higher dimensional analogue of the unit 1-ball $\BB^1$ in hyperbolic geometry. It is the domain $\BB^n=\{z\in\CC^n: |z|<1\}$, or equivalently $\PP V_-$ the projectivization of the negative cone $V_-=\{u\in V: h(u,u)<0 \}$ for $V\cong\CC^{n+1}$ equipped with a hermitian form $h$ with signature $(n,1)$. In this paper we study the following compact arithmetic quotient of $\BB^2$. We start from $K=\QQ(\sqrt{D})$ a quadratic field with discriminant $D>0$ and a CM extension $F=K(\sqrt{\alpha})$ for some totally negative $\alpha\in K$.  We have 2 pairs of embeddings of $F\to \CC$: $\sigma_1, \bar{\sigma}_1,\sigma_2, \bar{\sigma}_2$. Consider $(V,h)$ where $V\cong F^{\oplus 3}$ is an $F$-vector space equipped with a hermitian form $h$ with signature $(2,1)$ under the first pair of embeddings and definite under the other pair of embeddings. Then the special unitary group $\Gamma:=SU(h^{\sigma_1}, \OO_F)$ that preserves $h$ and has entries in $\OO_F$ gives an arithmetic lattice, and its action on $(F\otimes \CC, h^{\sigma_1})$ induces a compact arithmetic quotient of $\BB^2$. Denote $D'$ to be the norm of the relative discriminant of the CM extension $F/K$. We proved the following.

\begin{theorem}\label{thm:2_ball_geometry_ver}
 	For any genus $g$, there exists $D_0(g) >0$ and $D'_0 >0$ such that for $D>D_0(g)$ and $D' > D'_0$, the corresponding $\Gamma\backslash\BB^2$ defined as above has no complex curves of genus $g$. 
 \end{theorem}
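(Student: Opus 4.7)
The plan is to combine an Ahlfors--Schwarz type area bound for holomorphic curves with an arithmetic estimate on short elements of $\Gamma$ to derive a contradiction. The cases $g = 0, 1$ are immediate from Kobayashi hyperbolicity of $\BB^2$: any holomorphic map from $\PP^1$ or an elliptic curve into $X := \Gamma \backslash \BB^2$ is constant. So assume $g \geq 2$.

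\textbf{Geometric step.} The Bergman metric on $\BB^2$ has holomorphic sectional curvature bounded above by some $-c < 0$, so the metric induced on any holomorphic curve $C \subset X$ has Gaussian curvature $\leq -c$, and Gauss--Bonnet yields $\mathrm{Area}(C) \leq 2\pi(2g-2)/c$. By the Ahlfors--Schwarz lemma the hyperbolic metric on $C$ dominates the induced metric up to a constant, and Bers' theorem on short pants decompositions of a closed hyperbolic surface of genus $g$ produces a simple closed loop on $C$ whose image in $X$ has Bergman length $\leq L_g$, a constant depending only on $g$. A standard argument using that $\BB^2$ is Stein (so contains no compact positive-dimensional complex subvariety) rules out the loop being null-homotopic in $X$, so it corresponds to a nontrivial $\gamma \in \Gamma$ of Bergman translation length $\ell(\gamma) \leq L_g$.

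\textbf{Arithmetic step.} The bound $\ell(\gamma) \leq L_g$ controls the eigenvalues of $\gamma$ acting at the noncompact embedding $\sigma_1$, so $|\trace(\gamma)^{\sigma_1}| \leq M(g)$. Since $h^{\sigma_2}$ is positive definite, the eigenvalues of $\gamma^{\sigma_2}$ lie on the unit circle and $|\trace(\gamma)^{\sigma_2}| \leq 3$. Hence $\trace(\gamma) \in \OO_F$ has all four archimedean absolute values bounded by a constant $N(g)$. Since $\OO_F$ is a lattice in $F \otimes_\QQ \RR \cong \CC^2$ of covolume $\tfrac14 \sqrt{|\disc F|} = \tfrac14 D\sqrt{D'}$, Minkowski-type lattice-point counting shows that once $D\sqrt{D'}$ exceeds a threshold depending on $N(g)$, every such $\trace(\gamma)$ must lie in a proper subfield of $F$. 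A case analysis over the intermediate subfields of $F$ -- namely $K = \QQ(\sqrt{D})$ and the (at most two) imaginary quadratic subfields contained in $F$ -- together with parallel bounds on the remaining coefficients of the characteristic polynomial of $\gamma$, pins that polynomial down to a fixed finite set; for $D, D'$ above suitable thresholds, each resulting candidate $\gamma$ is ruled out by analyzing the hermitian lattice $(V, h, \OO_F)$.

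\textbf{Main obstacle.} The hard part is this arithmetic step. The separation of the $g$-dependent threshold $D_0(g)$ from the absolute $D'_0$ should reflect that $D$ governs the real quadratic contribution from $K$ (which scales with the size of $\trace(\gamma)$ and hence with $g$), while $D'$ controls how small the imaginary quadratic subfields of $F$ can be (an obstruction independent of $g$). Getting these two quantitative thresholds right requires careful bookkeeping in the Minkowski estimates, treating the two pairs of embeddings $\sigma_1, \bar\sigma_1$ and $\sigma_2, \bar\sigma_2$ asymmetrically. A secondary technical point is confirming that the short Bers loop indeed descends to a non-trivial element of $\Gamma$, which in the ball-quotient setting rests on the Stein property of $\BB^2$.
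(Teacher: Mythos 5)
Your proposal misses the central difficulty the paper is built around: $X = \Gamma\backslash\BB^2$ is an orbifold, and $C$ may pass through quotient singularities (elliptic points). Two of your key steps break down because of this.

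First, the Gauss--Bonnet estimate $\mathrm{Area}(C)\le 2\pi(2g-2)/c$ is false in general. The metric induced on $C$ from the Bergman metric acquires cone points wherever $C$ meets the singular locus of $X$, and the cone-angle defects contribute positively to the Gauss--Bonnet integral. The correct inequality (the paper's (5.1)) is $\frac{1}{4\pi}\vol_X(C) \le 2g-2 + \#\{\text{elliptic points on } C\}$, and a priori the number of elliptic points on $C$ could grow without bound, so the area is \emph{not} bounded by a function of $g$ alone. The paper spends Sections 3--4 precisely to bound $\#\{\text{elliptic points on } C\}$ by a small multiple of $\vol_X(C)$ (via repulsion of elliptic points, volume lower bounds near isolated elliptic points, and tubular-neighborhood estimates along elliptic complex lines), which then turns (5.1) into a genuine upper bound for $\vol_X(C)$ once $D$ is large.

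Second, and independently fatal, your arithmetic step cannot rule out the short element $\gamma$ being \emph{elliptic}. Elliptic elements of orders $2,3,4,6$ (e.g.\ $\mathrm{diag}(-1,-1,1)$ for diagonal $h$) lie in $\Gamma$ for every $D$, $D'$, have zero translation length, and have traces of absolute value $\le 3$ at all four archimedean places. The Minkowski argument pins $\trace(\gamma)$ to a finite set, but that set always contains the traces of these ever-present torsion elements, so "each resulting candidate is ruled out" cannot hold. The paper's actual argument avoids this trap: rather than trying to make $\Gamma$ torsion-free for large discriminants (impossible here), it uses the arithmetic to show that elliptic fixed loci \emph{repel} one another (Propositions \ref{prop:repulsion} and \ref{prop:repulsion_lines}) and that the loxodromic injectivity radius grows like $\cosh^{-1}(D^{1/4}/2)$ (Proposition \ref{prop:inj_radius_lox}), and then derives a \emph{lower} bound on $\vol_X(C)$ at a point far from the singular locus that contradicts the upper bound from Step 1.

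A secondary issue: the Bers short loop on $C$ is short in the uniformizing hyperbolic metric of $C$, not in the metric pulled back from $X$, so passing to a bound on $\ell(\gamma)$ needs an extra comparison (Schwarz only gives an inequality in one direction), and the loop could in any case map to a torsion or trivial element of $\Gamma$. The Stein argument you cite shows $\pi_1(C)\to\Gamma$ has infinite image, but not that the particular Bers loop has non-elliptic image.

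In short, the correct structure of the proof is (i) use volume estimates near the singular locus to convert the orbifold Gauss--Bonnet inequality into an upper bound on $\vol_X(C)$ for large $D$, and (ii) use injectivity-radius lower bounds away from the singular locus to get a contradicting lower bound; torsion in $\Gamma$ is managed geometrically (repulsion), not eliminated arithmetically.
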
 

This also has a moduli interpretation in terms of abelian sixfolds with $\OO_F$-endomorphism. See Section~\ref{sec:moduli_interpretation} for detail. We say that a skew-Hermitian form $T$ over $\CC$ has signature $(m,n)$ if the Hermitian form $i^{-1}T$ has signature $(m,n)$. 

\begin{theorem}\label{thm:2_ball_moduli_ver}
	Fix a skew-Hermitian form $T$ over $F$ such that $T$ is of signature $(2,1)$ under a pair of complex embeddings of $F$ and is definite under the other pair. For any complex curve $C$ of genus $g$, consider abelian schemes $f:\mathcal{A}\to C$ of dimension 6 with $\OO_F$-endomorphism of type ($\OO_F^3$, $T$). Then there exists $D_0(g) >0$ and $D'_0 >0$ such that for $D>D_0(g)$ and $D' > D'_0 $, such type of abelian schemes must be isotrivial.  
\end{theorem}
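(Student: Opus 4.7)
The plan is to derive Theorem~\ref{thm:2_ball_moduli_ver} from Theorem~\ref{thm:2_ball_geometry_ver} via the moduli interpretation discussed in Section~\ref{sec:moduli_interpretation}, which identifies the compact quotient $X := \Gamma \backslash \BB^2$ with the coarse moduli space of abelian sixfolds carrying an $\OO_F$-endomorphism of type $(\OO_F^3, T)$. Given the family $f : \mathcal{A} \to C$, I would first extract the classifying morphism $\phi : C \to X$ from the universal property of the coarse moduli space, and then case-split on $\dim \phi(C) \in \{0,1\}$.

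Suppose $\phi$ is constant. Then all geometric fibers of $\mathcal{A}$ lie in a single isomorphism class of polarized $\OO_F$-abelian sixfolds, and by rigidity of abelian varieties in characteristic zero the isomorphism scheme $\underline{\mathrm{Isom}}_C(\mathcal{A}, \mathcal{A}_0 \times_{\CC} C)$, for a chosen reference fiber $\mathcal{A}_0$, is finite étale over $C$ and surjective on geometric points. It therefore provides a finite étale cover of $C$ over which $\mathcal{A}$ becomes constant, so $\mathcal{A}$ is isotrivial. Suppose instead $\dim \phi(C) = 1$. Then $\phi(C) \subset X$ is an irreducible complex curve, and letting $Y \to \phi(C)$ be its normalization, the surjection $\phi \colon C \to \phi(C)$ lifts to a surjective morphism $C \to Y$, so the genus of $Y$ is at most $g$. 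Setting $\widetilde{D}_0(g) := \max_{0 \leq g' \leq g} D_0(g')$, a finite maximum over finitely many genera, Theorem~\ref{thm:2_ball_geometry_ver} rules out any complex curve on $X$ of genus at most $g$ whenever $D > \widetilde{D}_0(g)$ and $D' > D'_0$, contradicting the existence of $Y$. Hence $\phi$ must be constant, and $\mathcal{A}$ is isotrivial.

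The main expected obstacle is making this moduli interpretation fully precise at the scheme (as opposed to analytic) level. Because the PEL problem has nontrivial automorphisms arising from roots of unity in $\OO_F$, it naturally lives on a Deligne--Mumford stack, so descending to $X$ as a coarse space and verifying that \emph{``$\phi$ constant''} is genuinely equivalent to \emph{``$\mathcal{A}$ isotrivial''} (rather than a strictly weaker pointwise-isomorphism condition) must be argued with care. Both the algebraicity of $\phi$ and this equivalence should reduce to standard representability of the isomorphism functor for polarized abelian schemes as a finite unramified $C$-scheme in the PEL setting, with the detailed setup deferred to Section~\ref{sec:moduli_interpretation}.
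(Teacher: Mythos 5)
Your proposal is correct and takes essentially the same route as the paper: Theorem~\ref{thm:2_ball_moduli_ver} is deduced from Theorem~\ref{thm:2_ball_geometry_ver} together with the moduli interpretation established in Section~\ref{sec:moduli_interpretation} (in particular Lemma~\ref{lem:moduli_space}). The paper leaves the deduction implicit, so the details you supply — extracting the classifying map $\phi : C \to X$, passing to the normalization of $\phi(C)$ to bound its genus by $g$, replacing $D_0(g)$ by $\max_{g'\le g} D_0(g')$, and invoking rigidity of abelian schemes to upgrade "pointwise isomorphic" to "isotrivial" when $\phi$ is constant — are exactly the right ones to make the argument complete.
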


The proof uses the method of volume estimates, which has been used in, for example, \cite{hwang2002volumes,hwang2012injectivity,bakker2013frey,bakker2018geometric,bakker2019ax} to prove interesting results in algebraic geometry. In contrast to existing literature, we use it to understand the geometry of quotient singularities. Compared to similar objects such as Hilbert modular surfaces, notable new phenomena for arithmetic quotient of 2-balls include the existence of 1-dimensional quotient singularities given by some special curves, and non-cyclic quotient singularities at the intersection of these curves. This requires a volume estimate to give a bound on the intersection number between these stacky special curves and any other complex curves. Compared to the volume estimates in \cite{memarian2023volumes}, our work focuses on the geometry near the quotient singularities instead of the boundaries.

\section{Background}

\subsection{Complex Hyperbolic 2-ball}

In this section we define complex hyperbolic $n$-balls and summarize useful facts about the $n=2$ case. See \cite{goldman1999complex} or \cite{parker2003notes} for detail. 

\subsubsection{The projective model}

The unit ball in $\CC^n$ and the Bergman metric, which is a natural metric with constant negative holomorphic sectional curvature, give a model for complex hyperbolic $n$-space $\BB^n$. When $n=1$, this is the usual Poincar{\'e} unit disc model of the hyperbolic plane. We now give a brief review of $\BB^n$, especially the case where $n=2$.

A Hermitian form on $\CC^n$ is a $\CC$-valued $\RR$-bilinear form $h:\CC^n\times\CC^n\to \CC$ such that for any fixed $w\in\CC^n$, $v\mapsto h(v,w)$ is $\CC$-linear, and for any $v,w\in\CC^n$, $h(v,w)=\overline{h(w,v)}$. Alternatively, if we fix a basis of $\CC^n$, so $v,w$ can be considered as column vectors, then a Hermitian form $h$ is given by $(v,w)\mapsto v^H J_h w $ for some Hermitian matrix $J_h$. Here the superscript $H$ means the Hermitian conjugate, i.e. the transpose of the element-wise complex conjugation of the matrix, and thus a matrix $J$ is Hermitian if $J=J^H$. 

Let $V$ be $\CC^{n+1}$ equipped with a Hermitian form $\langle \cdot,\cdot\rangle$ of signature $(n,1)$. One example is the standard Hermitian form $h_{std}(u,v)=u_1\bar{v}_1+\cdots +u_n\bar{v}_n-u_{n+1}\bar{v}_{n+1}$. Since $\langle u,u\rangle$ is real for any $u\in V$, we can define subsets:
\begin{align*}
	&\text{negative} \ \ \  &V_-&=\{u\in V: \langle u,u\rangle < 0 \}  \\
	&\text{null} \ \ \      &V_0&=\{u\in V\backslash \{0\}: \langle u,u\rangle = 0 \}  \\
	&\text{positive} \ \ \  &V_+&=\{u\in V: \langle u,u\rangle > 0 \} 
\end{align*}
and for $u\in V$ we denote $u < 0$ (resp. $u>0$) if $u\in V_-$ (resp. $V_+$). Since $\langle \lambda u,\lambda u\rangle=|\lambda|^2 \langle u,u\rangle$ for any scalar $\lambda\in\CC^*$ and any $u\in V$, the projectivization map $\PP: \CC^{n+1}\to \PP_\CC^{n}$ preserves the sign of $\langle u,u\rangle$ and thus $\PP V_-$, $\PP V_0$, and $\PP V_+$ are well-defined. The projective model of complex hyperbolic $n$-space $\BB^n$ is defined to be the negative lines in $V$, i.e. $\BB^n := \PP V_-$. Thus, its boundary $\partial\BB^n = \PP V_0$. 

We can define a natural metric on $\BB^n$ called the Bergman metric. First we denote the ``tance'' function for any $u,v\in V_- \cup V_+$:
\[
	\ta(u,v) := \frac{\langle u,v\rangle \langle v,u\rangle}{\langle u,u\rangle \langle v,v\rangle}\in \RR
\]
It has the following useful properties:
\begin{enumerate}[label=(\roman*)]
	\item $\ta(u,v)=\ta(v,u)$ for any $u,v\in V_- \cup V_+$;
	\item $\ta(\lambda u,v)=\ta(u,v)$ for any $\lambda\in \CC^*$ and $u,v\in V_- \cup V_+$, so $\ta(\cdot,\cdot)$ is well-defined on $\PP V_- \cup \PP V_+$;
	\item $\ta(u,v) \ge 1$ if $u,v\in V_-$;  $\ta(u,v) \ge 0$ if $u,v\in V_+$; $\ta(u,v) \le 0$ if $u\in V_+$ and $v\in V_-$.
\end{enumerate}
For the proof, only $\ta(u,v) \ge 1$ for $u,v\in V_-$ is not trivial. It can be explicitly proved in the case of the standard Hermitian form and then extended to the general case by Cayley transforms which will be defined below. Now the distance function $d(\cdot,\cdot)$ on $\BB^n$ given by the Bergman metric is defined to be
\[\cosh^2\left(\frac{d(u,v)}{2} \right):= \ta(u,v) :=\frac{\langle u,v\rangle \langle v,u\rangle}{\langle u,u\rangle \langle v,v\rangle} \]
for $u,v\in \BB^n$, or alternatively, 
\[ds^2=\frac{-4}{\langle u,u\rangle^2}\det\begin{pmatrix}
	\langle u,u\rangle & \langle u,du\rangle \\ \langle du,u\rangle & \langle du,du\rangle 
\end{pmatrix}. \]
The $-4$ normalizes its holomorphic sectional curvature to be $-1$. 

The Bergman metric is invariant to the choice of the Hermitian form in the sense that for different choices of Hermitian forms, the complex hyperbolic $n$-spaces are isometric. More specifically, we have the standard Hermitian form $h_{std}(u,v)=u_1\bar{v}_1+\cdots +u_n\bar{v}_n-u_{n+1}\bar{v}_{n+1}$ corresponding to the diagonal matrix $J_{std}$ with diagonal entries $1,\cdots,1,-1$. Then consider $J$ any Hermitian matrix of signature $(n,1)$, so $J$ can be unitarily diagonalized with real eigenvalues, i.e. we can find a matrix $T\in \GL_{n+1}(\CC)$ such that $J=T^HJ_{std}T$. Then we have an isometry called Cayley transform $(\CC^{n+1}, J)\to(\CC^{n+1}, J_{std}), \ \ u\mapsto Tu$. When we consider $h_{std}(u,v)=u_1\bar{v}_1+\cdots +u_n\bar{v}_n-u_{n+1}\bar{v}_{n+1}$, any point $[u_1,\cdots,u_{n+1}]\in \PP V_-$ corresponds to a point $(u_1/u_{n+1},\cdots, u_n/u_{n+1})$ inside the open unit ball in $\CC^n$, so we also call $\BB^n$ the complex hyperbolic $n$-ball.

\subsubsection{Isometries}\label{section:classify_isometries}
Consider $V\cong\CC^{n+1}$ equipped with a Hermitian form $h(\cdot,\cdot)=\langle\cdot,\cdot\rangle$ of signature $(n,1)$. If we fix a basis of $V$, the Hermitian form is represented by a Hermitian matrix $J$, and linear transformations $M$ on $V$ preserving the Hermitian form satisfy:
\[ \langle Mu,Mv\rangle = v^HM^HJMu =\langle u,v\rangle = v^H J u. \]
for any $u,v\in V$, so it is equivalent to requiring $M^HJM=J$. Thus they define the unitary group:
\[U(J):=\{M\in M_{n+1}(\CC): \langle Mu,Mv\rangle =\langle u,v\rangle \ \ \forall u,v\in V \}\cong U(n,1). \]
The last isomorphism is induced by Cayley transform $T$: $U(J)\to U(J_{std})=:U(n,1), \ \ M\mapsto TMT^{-1}$. Since $\det(M^HJM)=\det(J)$, we also have $|\det(M)|=1$. Since scalar multiplication $\lambda I_{n+1}$ for any $\lambda\in\CC^*$ commutes with any $M\in U(J)$, $U(J)$ is well-defined on $\PP V$, and the kernel of this action on $\PP V$ is $\{\lambda I: |\lambda|=1 \}\cong U(1)\subset U(J)$. Then, we define the projective unitary group
\[\PP U(J):=U(J)/\{\lambda\cdot id: |\lambda|=1 \}=SU(J)/\{\lambda\cdot id\in SU(J): |\lambda|=1 \} \]
whose element always has a representative with determinant 1 in $SU(J)$. Indeed, $\BB^n$ is homogeneous:

\begin{lemma}[Lemma 3.1.3 of \cite{goldman1999complex}]\label{lem:transitive_action}

	$\PP U(J)$ acts transitively on $\BB^n$. 
\end{lemma}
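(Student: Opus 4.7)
The plan is to prove transitivity by exhibiting, for any two points of $\BB^n$, an element of $U(J)$ that carries one to the other; since $\PP U(J) = U(J)/\{\lambda I : |\lambda|=1\}$, the same then holds in $\PP U(J)$. The method is an orthonormal-basis extension argument for the indefinite Hermitian form.

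First I would normalize: given two points in $\BB^n$, pick representatives $u, v \in V_-$ with $\langle u, u\rangle = \langle v, v\rangle = -1$, which is possible because $\langle \lambda u, \lambda u\rangle = |\lambda|^2 \langle u, u\rangle$ ranges over all negative reals as $\lambda$ varies in $\CC^*$. It then suffices to find $M \in U(J)$ with $Mu = v$, since $M$ induces an element of $\PP U(J)$ sending $[u]$ to $[v]$.

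Next I would set $W_u := u^\perp = \{w \in V : \langle w, u\rangle = 0\}$ and $W_v := v^\perp$, and argue that $V = \CC u \oplus W_u$ is an orthogonal direct sum and that $\langle\cdot, \cdot\rangle|_{W_u}$ is positive definite of dimension $n$. The direct-sum decomposition is immediate from $\langle u, u\rangle = -1 \neq 0$, via $w = -\langle w, u\rangle u + \bigl(w + \langle w, u\rangle u\bigr)$. For the signature, since the decomposition is orthogonal, signatures add: $\CC u$ contributes $(0,1)$ and the ambient $V$ has signature $(n,1)$, so $W_u$ has signature $(n,0)$, i.e.\ it is positive definite. The same reasoning applies to $W_v$. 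I would then apply Gram--Schmidt on $(W_u, \langle\cdot, \cdot\rangle|_{W_u})$ and on $(W_v, \langle\cdot, \cdot\rangle|_{W_v})$ to obtain $\langle\cdot,\cdot\rangle$-orthonormal bases $\{e_1, \ldots, e_n\}$ and $\{f_1, \ldots, f_n\}$. The bases $\{u, e_1, \ldots, e_n\}$ and $\{v, f_1, \ldots, f_n\}$ of $V$ then both have Gram matrix $\mathrm{diag}(-1, 1, \ldots, 1)$, so the $\CC$-linear map $M$ sending the first to the second preserves $\langle\cdot, \cdot\rangle$; hence $M \in U(J)$ and $Mu = v$.

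The main obstacle is the signature computation for $W_u$; once one knows the orthogonal complement of a negative line in a form of signature $(n,1)$ is automatically positive definite of rank $n$, the rest is routine linear algebra. A more computational alternative would be to transport the problem via the Cayley transform of the previous subsection to the standard ball model and write down an explicit ``boost'' sending $0 \in \BB^n$ to a prescribed $a \in \BB^n$, but the coordinate-free basis-extension argument above seems cleaner and fits more naturally with the projective formulation $\BB^n = \PP V_-$ used throughout the paper.
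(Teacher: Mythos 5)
The paper states this as Lemma 3.1.3 of Goldman's book and gives no proof of its own, so there is nothing to compare against line by line. Your argument is correct and is the standard Witt-extension proof: normalize representatives so $\langle u,u\rangle=\langle v,v\rangle=-1$, observe that $u^\perp$ and $v^\perp$ are positive definite of rank $n$ by additivity of signature across the orthogonal splitting $V=\CC u\oplus u^\perp$, use Gram--Schmidt to produce orthonormal bases, and let $M\in U(J)$ be the change-of-basis map. This is essentially the argument Goldman uses and fits cleanly with the projective model $\BB^n=\PP V_-$ used in the paper.
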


Then we focus on the case $n=2$ and list some results about the classification of isometries on $\BB^2$. In the literature they are mostly proved for the case where $J=J_{std}$, but the statements also apply to the general case since they do not depend on the choice of basis. 

To state the classification, we first have the following trichotomy:
\begin{definition}
A holomorphic complex hyperbolic isometry is 
	\begin{enumerate}[label=(\roman*)]
		\item \emph{loxodromic} if it fixes exactly two points of $\partial\BB^2$;
		\item \emph{parabolic} if it fixes exactly one point of $\partial\BB^2$;
		\item \emph{elliptic} if it fixes at least one point of $\BB^2$. 
	\end{enumerate}
\end{definition} 

For $M\in SU(J)$, we have the following restriction on its characteristic polynomial, which is a consequence of $M^HJM=J$. 
\begin{lemma}[Lemma 6.2.5 of \cite{goldman1999complex}]\label{lem:eigenval_conj_inverse}
	If $M\in SU(J)$ and $\lambda_0$ is an eigenvalue of $M$, then $\bar{\lambda}_0^{-1}$ is also an eigenvalue of $M$. Therefore, if we denote $t:=\trace(M)$, then the characteristic polynomial of $M$ is $\lambda^3-t\lambda^2+\bar{t}\lambda-1 $. \qed
\end{lemma}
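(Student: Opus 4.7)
The plan is to read off everything from the defining identity $M^HJM=J$, which rearranges to $M^{-1}=J^{-1}M^HJ$. This tells us that $M^{-1}$ and $M^H$ are conjugate as matrices and hence share a characteristic polynomial, and in particular share eigenvalues (with multiplicity).

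First I would use this to prove the symmetry statement. The eigenvalues of $M^H$ are the complex conjugates of those of $M$ (since $\det(M^H-\bar\lambda I)=\overline{\det(M-\lambda I)}$), while the eigenvalues of $M^{-1}$ are the reciprocals of those of $M$. Combining with the conjugacy of $M^{-1}$ and $M^H$ above, the multisets $\{\bar\lambda_1,\bar\lambda_2,\bar\lambda_3\}$ and $\{\lambda_1^{-1},\lambda_2^{-1},\lambda_3^{-1}\}$ coincide. Equivalently, the map $\lambda\mapsto\bar\lambda^{-1}$ permutes the eigenvalues of $M$, which is exactly the claim that $\bar\lambda_0^{-1}$ is an eigenvalue whenever $\lambda_0$ is.

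Next I would compute the coefficients of the characteristic polynomial $\lambda^3-e_1\lambda^2+e_2\lambda-e_3$ via the elementary symmetric polynomials $e_i$ in $\lambda_1,\lambda_2,\lambda_3$. By definition $e_1=\lambda_1+\lambda_2+\lambda_3=t$. Since $M\in SU(J)$ implies $\det M=1$, we get $e_3=\lambda_1\lambda_2\lambda_3=1$. For $e_2$, I would use the standard identity
\[
e_2 \;=\; \lambda_1\lambda_2+\lambda_2\lambda_3+\lambda_1\lambda_3 \;=\; e_3\cdot\bigl(\lambda_1^{-1}+\lambda_2^{-1}+\lambda_3^{-1}\bigr) \;=\; \lambda_1^{-1}+\lambda_2^{-1}+\lambda_3^{-1},
\]
and then invoke the symmetry from the previous paragraph: $\sum\lambda_i^{-1}=\sum\bar\lambda_i=\overline{\sum\lambda_i}=\bar t$. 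Assembling $e_1=t$, $e_2=\bar t$, $e_3=1$ yields the stated characteristic polynomial $\lambda^3-t\lambda^2+\bar t\lambda-1$.

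There is essentially no obstacle here; the only point requiring a moment's care is ensuring the eigenvalue symmetry is read off as an equality of multisets (so that the symmetric-function manipulation is legitimate even when eigenvalues coincide), which is guaranteed by the conjugacy $M^{-1}=J^{-1}M^HJ$ giving equality of characteristic polynomials rather than merely of spectra.
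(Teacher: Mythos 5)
Your proof is correct and is the standard argument; the paper itself gives no proof, simply citing Lemma 6.2.5 of Goldman's book, and your derivation via $M^{-1}=J^{-1}M^H J$ together with $\det M=1$ is essentially the one given there. The careful remark about the multiset equality (rather than mere set equality) is exactly the right point to flag, since it is what licenses the symmetric-function computation of $e_2$.
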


Now we can state the classification of isometries.

\begin{theorem}[Theorem 6.2.4 of \cite{goldman1999complex}]\label{thm:classification_of_isometries}
	Consider $M\in SU(J)$.  
	\begin{enumerate}[label=(\roman*)]
		\item $M$ is loxodromic if and only if one of the eigenvalues $\lambda$ has $|\lambda|\ne 1$. In this case, if we denote $\lambda=re^{i\theta}$ for $r,\theta\in\RR$, then the three eigenvalues are $re^{i\theta}$, $r^{-1}e^{i\theta}$, $e^{-2i\theta}$. If they have eigenvectors $u,v,w$ respectively, then
		\[\langle u,u\rangle=\langle v,v\rangle=\langle u,w\rangle=\langle v,w\rangle=0, \ \ \langle w,w\rangle >0, \ \ \langle u,v\rangle\ne 0. \]
		\item $M$ is parabolic if and only if it has a repeated eigenvalue whose eigenspace is spanned by one null vector. It is called pure parabolic (resp. screw parabolic) if the dimension of this eigenspace is 3 (resp. 2). 
		\item $M$ is elliptic if and only if it can diagonalized and all the eigenvalues have norm 1. We have the following cases:
		\begin{enumerate}[label=(\alph*)]
			\item $M$ only has one fixed point in $\BB^2$. In this case, let the fixed point be given by a negative eigenvector $u$ with eigenvalue $\lambda_1$. Then either we have a repeated eigenvalue $\lambda_2=\lambda_3\ne \lambda_1$ corresponding to an eigenspace spanned by two positive vectors, or all eigenvalues $\lambda_1,\lambda_2,\lambda_3$ are distinct and each of $\lambda_2,\lambda_3$ corresponds to a positive eigenspace. $M$ is called a complex reflection about a point in the first case, and is regular elliptic in the second case. 
			\item $M$ is called a complex reflection about a line or boundary elliptic when the fixed points in $\BB^2$ form a complex line. In this case, this line corresponds to a 2-dimensional eigenspace spanned by both positive and negative vectors. In other words, we can find eigenvectors $u,v,w$ corresponding to eigenvalues $\lambda_1=\lambda_2\ne \lambda_3$ with $u<0$ and $v,w> 0$. 
			\item The last case is where $M$ corresponds to scalar multiplication by a third root of unity. 
		\end{enumerate}
	\end{enumerate}
\end{theorem}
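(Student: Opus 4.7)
The plan is to reduce the geometric classification to an algebraic classification of the eigenstructure of $M$ together with the signature of $h$ restricted to eigenspaces. A point $[u]\in\PP V$ is fixed by $M$ iff $u$ is an eigenvector of $M$, and the fixed point lies in $\BB^2$, on $\partial\BB^2$, or outside $\overline{\BB^2}$ according as $\langle u,u\rangle$ is negative, zero, or positive. So I first list all eigenvalue configurations allowed by Lemma~\ref{lem:eigenval_conj_inverse} and $\det M=1$, then pin down the signature of each eigenspace via the Hermitian form.

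The two workhorse algebraic facts I would establish first are: (i) if $Mu=\lambda u$ with $|\lambda|\ne 1$, then $\langle u,u\rangle=|\lambda|^2\langle u,u\rangle$ forces $u$ to be null; (ii) if $Mu=\lambda_1 u$ and $Mv=\lambda_2 v$ with $\lambda_1\overline{\lambda_2}\ne 1$, then $\langle u,v\rangle=\lambda_1\overline{\lambda_2}\langle u,v\rangle=0$. Combining these with Lemma~\ref{lem:eigenval_conj_inverse}, the eigenvalues of $M\in SU(J)$ fall into exactly two modulus patterns: either all three eigenvalues lie on the unit circle, or one pair is $\{re^{i\theta},r^{-1}e^{i\theta}\}$ with $r\ne 1$ and the third is forced to be $e^{-2i\theta}$ by $\det M=1$.

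For the loxodromic case ($r\ne 1$), facts (i) and (ii) immediately force the first two eigenvectors $u,v$ to be null and the third eigenvector $w$ to be orthogonal to each of them; a signature-$(2,1)$ form admits no totally null $2$-plane, so $\langle u,v\rangle\ne 0$, the plane $\mathrm{span}(u,v)$ has signature $(1,1)$, and $w$ must be positive. This gives the two boundary fixed points $[u],[v]$ and the claimed sign pattern. For the elliptic case, $M$ is diagonalizable with unit-modulus eigenvalues, and I would split on the multiplicities: if the eigenvalues are distinct, exactly one eigenvector can be negative (else two negative lines in a signature-$(2,1)$ space forces inconsistency), yielding the regular elliptic case and the complex reflection about a point when two eigenvalues coincide with positive-definite $2$-eigenspace; if a $2$-dimensional eigenspace has indefinite restriction $(1,1)$, its projectivization is a complex line of fixed points, giving the boundary elliptic case. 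The third subcase, $M=\zeta I$ with $\zeta^3=1$, is forced by $\det=1$ when all eigenvalues coincide.

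The main obstacle is the parabolic case, which requires ruling out diagonalizability. I would analyze the Jordan form: a repeated eigenvalue $\lambda$ with a nontrivial block $Mv=\lambda v+u$, $Mu=\lambda u$ (so $|\lambda|=1$, since otherwise (i) applied to $u$ combined with the pairing $\lambda\leftrightarrow \bar\lambda^{-1}$ contradicts the Jordan structure in dimension three). Expanding $\langle Mv,Mv\rangle=\langle v,v\rangle$ and $\langle Mu,Mv\rangle=\langle u,v\rangle$ using $|\lambda|=1$ yields successively $\langle u,u\rangle=0$ and a real constraint on $\mathrm{Re}(\bar\lambda\langle u,v\rangle)$, from which I can conclude that the fixed eigenspace is spanned by a single null vector. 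The pure/screw dichotomy is then the dimension of the full eigenspace of $\lambda$, controlled by whether the Jordan block is a single $3\times 3$ block or a $2\times 2$ block plus a $1\times 1$ block (for the latter, the remaining eigenvalue is forced to be $\bar\lambda^{-2}$ and gives a positive eigenvector by the same orthogonality argument as in the loxodromic case). Finally, I would note that the three cases are mutually exclusive because they are distinguished by the pattern of eigenvalue moduli and the diagonalizability of $M$.
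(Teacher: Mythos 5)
The paper does not prove this theorem itself; it is imported verbatim from Goldman's book (Theorem 6.2.4). Your plan --- reduce the geometric trichotomy to the moduli of the eigenvalues together with the signature of $h$ restricted to eigenspaces and generalized eigenspaces --- is the standard argument and is essentially the route Goldman takes, so I will assess it on its own merits. Your two workhorse facts are both correct and fully carry the loxodromic case (including the signature-$(1,1)$ structure of $\mathrm{span}(u,v)$ and the positivity of $w$) and, with the orthogonal eigenspace decomposition, all three elliptic subcases. The observation that ``if and only if'' follows once the three forward implications are established and the eigenvalue-modulus/diagonalizability patterns are seen to be mutually exclusive and exhaustive is also correct, although you assert rather than verify that exhaustiveness.

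The genuine gap is in the parabolic case. You enumerate only two Jordan types: a single $3\times 3$ block, and $J_2(\lambda)\oplus J_1(\mu)$ with $\mu\ne\lambda$. There is a third possibility you do not treat, $J_2(\lambda)\oplus J_1(\lambda)$ with the \emph{same} eigenvalue (hence $\lambda^3=1$), and this genuinely occurs in $SU(J)$: e.g.\ $M=\left(\begin{smallmatrix}1&1&0\\0&1&0\\0&0&1\end{smallmatrix}\right)$ lies in $SU(J)$ for $J=\left(\begin{smallmatrix}0&i&0\\-i&0&0\\0&0&1\end{smallmatrix}\right)$, which has signature $(2,1)$. In this case the $\lambda$-eigenspace is two-dimensional, spanned by $u$ and an extra eigenvector $w$; after your computation showing $\langle u,u\rangle=0$ and $\Re(\bar\lambda\langle u,v\rangle)=0$ you still need the additional steps $\langle u,w\rangle=0$ (from $\langle Mw,Mv\rangle=\langle w,v\rangle$) and $\langle w,w\rangle>0$ (since $u^\perp/\CC u$ is positive definite for null $u$), which together show the eigenspace still meets $\overline{\BB^2}$ only at $[u]$. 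Relatedly, your stated pure/screw criterion --- ``whether the Jordan block is a single $3\times 3$ block or a $2\times 2$ block plus a $1\times 1$ block'' --- misclassifies this third type: $J_2(\lambda)\oplus J_1(\lambda)$ is pure parabolic (all eigenvalues equal, projectively unipotent) despite not being a single $3\times 3$ block. The distinction in the theorem is governed by the algebraic multiplicity of the repeated eigenvalue (three versus two), not by the number of Jordan blocks, nor by the dimension of the geometric eigenspace (which is $1$ for both $J_3(\lambda)$ and the screw case). Finally, a small slip: in the screw case the third eigenvalue is $\lambda^{-2}=\bar\lambda^2$ (since $|\lambda|=1$), not $\bar\lambda^{-2}=\lambda^2$.
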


By Lemma~\ref{lem:eigenval_conj_inverse}, the trace of an isometry determines the characteristic polynomial, so we have the following by computation of the resultant.

\begin{theorem}[Theorem 6.2.4 of \cite{goldman1999complex}]\label{thm:classification_of_isometries_resultant}
 	Let $f(\tau)=|\tau|^4-8\Re(\tau^3)+18|\tau|^2-27 $. Let $M\in SU(J)$. Then
 	\begin{enumerate}[label=(\roman*)]
 		\item $M$ is loxodromic if and only if $ f(\trace(M)) >0$; 
 		\item $M$ has distinct eigenvalues if and only if $f(\trace(M)) <0$;
 		\item $M$ has repeated eigenvalues if and only if $ f(\trace(M)) =0$. 
 	\end{enumerate}
 \end{theorem}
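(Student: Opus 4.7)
The plan is to recognize $f(\trace(M))$ as the discriminant of the characteristic polynomial of $M$, and then determine its sign in the loxodromic and regular elliptic cases using the explicit eigenvalue descriptions from Theorem~\ref{thm:classification_of_isometries}.

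By Lemma~\ref{lem:eigenval_conj_inverse} the characteristic polynomial is $p(\lambda)=\lambda^3 - t\lambda^2 + \bar t\lambda - 1$ with $t=\trace(M)$. First I would apply the standard cubic discriminant formula $\mathrm{disc}(\lambda^3+a\lambda^2+b\lambda+c)=18abc-4a^3c+a^2b^2-4b^3-27c^2$ with $(a,b,c)=(-t,\bar t,-1)$; a short calculation produces
\[ \mathrm{disc}(p) = |t|^4 - 4(t^3 + \bar t^3) + 18|t|^2 - 27 = f(t). \]
Since $\mathrm{disc}(p) = \prod_{i<j}(\lambda_i-\lambda_j)^2$ vanishes iff $p$ has a repeated root, statement (iii) is immediate.

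Next I would plug in the explicit eigenvalues from Theorem~\ref{thm:classification_of_isometries}. In the loxodromic case one has $\lambda_1=re^{i\theta}$, $\lambda_2=r^{-1}e^{i\theta}$, $\lambda_3=e^{-2i\theta}$ with $r>0$, $r\neq 1$. A short direct calculation yields $(\lambda_1-\lambda_2)^2=(r-r^{-1})^2 e^{2i\theta}$ and $(\lambda_1-\lambda_3)(\lambda_2-\lambda_3)=e^{-i\theta}\bigl(2\cos 3\theta - (r+r^{-1})\bigr)$, so all phase factors cancel and
\[ f(t) = (r-r^{-1})^2 \bigl(2\cos 3\theta - (r+r^{-1})\bigr)^2. \]
This is strictly positive: $r\neq 1$ makes the first factor positive, while $r+r^{-1}>2\geq 2\cos 3\theta$ makes the second factor nonzero. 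This proves (i).

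In the regular elliptic case (distinct eigenvalues with $M$ not loxodromic), the three eigenvalues lie on the unit circle, say $\lambda_j=e^{i\theta_j}$, and $\prod\lambda_j=1$ forces $\theta_1+\theta_2+\theta_3\equiv 0\pmod{2\pi}$. Using $\lambda_i-\lambda_j=2ie^{i(\theta_i+\theta_j)/2}\sin((\theta_i-\theta_j)/2)$, the accumulated phase $e^{i\cdot 2(\theta_1+\theta_2+\theta_3)}$ equals $1$, and the discriminant simplifies to
\[ f(t) = -64\,\sin^2\!\tfrac{\theta_1-\theta_2}{2}\,\sin^2\!\tfrac{\theta_1-\theta_3}{2}\,\sin^2\!\tfrac{\theta_2-\theta_3}{2}, \]
which is strictly negative when the three $\theta_j$ are distinct modulo $2\pi$, proving (ii). The entire argument is essentially computational once the discriminant has been identified; the only real care required is tracking the cancellation of the complex phase factors in each case so that $f(t)$, a priori a complex expression, emerges as a real quantity with the claimed sign.
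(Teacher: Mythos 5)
The paper does not supply its own proof of this statement: it cites Goldman's Theorem~6.2.4 and remarks only that the result follows ``by computation of the resultant'' once the characteristic polynomial is pinned down by Lemma~\ref{lem:eigenval_conj_inverse}. Your discriminant computation is the precise implementation of that remark (the discriminant of $p$ is the resultant of $p$ and $p'$ up to normalization), and the algebra checks out: the cubic discriminant formula applied to $\lambda^3-t\lambda^2+\bar t\lambda-1$ does yield $f(t)$; the phase factors $e^{2i\theta}$ and $e^{-2i\theta}$ cancel in the loxodromic factorization; and in the regular elliptic case the constraint $\theta_1+\theta_2+\theta_3\equiv 0\ (\mathrm{mod}\ 2\pi)$ makes $e^{i\sum_{i<j}(\theta_i+\theta_j)}=e^{2i(\theta_1+\theta_2+\theta_3)}=1$, so $f(t)=-64\prod_{i<j}\sin^2\frac{\theta_i-\theta_j}{2}<0$.

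Two minor remarks, neither a real gap. First, you prove only one direction of each equivalence; the converses follow because loxodromic, regular elliptic, and ``has a repeated eigenvalue'' are mutually exclusive and jointly exhaustive for $M\in SU(J)$. The exhaustiveness deserves a sentence: distinct eigenvalues imply diagonalizability, and if moreover all three lie on the unit circle then, since $\langle u,v\rangle=\lambda\bar\mu\langle u,v\rangle$ for eigenvectors with eigenvalues $\lambda,\mu$, the eigenspaces are pairwise orthogonal, so exactly one must be negative and $M$ is elliptic. Second, you correctly read (ii) as ``$M$ is regular elliptic'' rather than the paper's literal ``$M$ has distinct eigenvalues,'' which as written would conflict with (i) since loxodromic elements also have distinct eigenvalues; your reading matches Goldman's original formulation.
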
 

When $M\in SU(J)$ is diagonalizable, we can pick some eigenvectors as a basis of $\CC^3$. Using the orthogonal relations above we have the following useful identities. 

\begin{corollary}\label{cor:classification_of_isometries}
Consider $M\in SU(J)$.
	\begin{enumerate}[label=(\roman*)]
		\item When $M$ is loxodromic, denote the three eigenvalues to be $re^{i\theta}$, $r^{-1}e^{i\theta}$, $e^{-2i\theta}$ and their respective eigenvectors $v_1,v_2,v_3$. Then for any $u\in (\CC^3,J)$, 
		\begin{equation}\label{eq:elliptic_expand}
			u=\frac{\langle u, v_2\rangle}{\langle v_1, v_2\rangle}v_1 + \frac{\langle u, v_1\rangle}{\langle v_2, v_1\rangle}v_2 + \frac{\langle u, v_3\rangle}{\langle v_3, v_3\rangle}v_3.
		\end{equation}
		and thus
		\begin{equation}\label{eq:elliptic_sum_one}
			1=\frac{\langle u, v_2\rangle\langle v_1, u\rangle}{\langle v_1, v_2\rangle\langle u, u\rangle} + \frac{\langle u, v_1\rangle\langle v_2, u\rangle}{\langle v_2, v_1\rangle\langle u, u\rangle} + \frac{\langle u, v_3\rangle\langle v_3, u\rangle}{\langle v_3, v_3\rangle\langle u, u\rangle}
			=:w+\bar{w}+\ta(u,v_3)
		\end{equation}
		if we denote $w:=\frac{\langle u, v_2\rangle\langle v_1, u\rangle}{\langle v_1, v_2\rangle\langle u, u\rangle}$. 
		\item When $M$ is regular elliptic or a complex reflection about a point (resp. boundary elliptic), we can find eigenvectors $v_1,v_2,v_3$ corresponding to eigenvalues $\lambda_1,\lambda_2,\lambda_3$ respectively such that $\lambda_1\ne\lambda_2$ and $\lambda_1\ne\lambda_3$ (resp. $\lambda_1=\lambda_2\ne\lambda_3$), and
		\[
			\langle v_1,v_1\rangle <0, \ \ \langle v_2,v_2\rangle, \langle v_3,v_3\rangle >0, \ \ \langle v_1,v_2\rangle=\langle v_1,v_3\rangle=\langle v_2,v_3\rangle=0
		\]
		and consequently, for any $u\in (\CC^3,J)$, 
		\begin{equation}\label{eq:lox_expand}
			u=\frac{\langle u, v_1\rangle}{\langle v_1, v_1\rangle}v_1 + \frac{\langle u, v_2\rangle}{\langle v_2, v_2\rangle}v_2 + \frac{\langle u, v_3\rangle}{\langle v_3, v_3\rangle}v_3.
		\end{equation}
		and thus
		\begin{equation}\label{eq:lox_sum_one}
			1=\frac{\langle u, v_1\rangle\langle v_1, u\rangle}{\langle v_1, v_1\rangle\langle u, u\rangle} + \frac{\langle u, v_2\rangle\langle v_2, u\rangle}{\langle v_2, v_2\rangle\langle u, u\rangle} + \frac{\langle u, v_3\rangle\langle v_3, u\rangle}{\langle v_3, v_3\rangle\langle u, u\rangle}
			=:\sum^3_{j=1} \ta(u,v_j)
		\end{equation}
	\end{enumerate}
\end{corollary}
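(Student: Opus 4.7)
The plan is to reduce both parts to a single template: in each case Theorem~\ref{thm:classification_of_isometries} supplies eigenvectors $v_1,v_2,v_3$ that form a basis of $\CC^3$ in which the Hermitian form $\langle\cdot,\cdot\rangle$ is sufficiently ``sparse'' that the coefficients of any $u\in\CC^3$ in this basis can be read off by pairing $u$ against a single $v_j$. Once the expansion is established, the sum-to-one identities \eqref{eq:elliptic_sum_one} and \eqref{eq:lox_sum_one} follow by pairing the expansion of $u$ against $u$ itself and dividing through by $\langle u,u\rangle$.

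For part~(i), I would first note that in the loxodromic case the three eigenvalues $re^{i\theta}$, $r^{-1}e^{i\theta}$, $e^{-2i\theta}$ have pairwise distinct moduli $r,r^{-1},1$ (since $r\ne 1$), so $\{v_1,v_2,v_3\}$ is indeed a basis. Writing $u=c_1v_1+c_2v_2+c_3v_3$, I would pair with $v_2$: the vanishing $\langle v_2,v_2\rangle=\langle v_3,v_2\rangle=0$ from Theorem~\ref{thm:classification_of_isometries}(i) kills the $c_2$ and $c_3$ contributions, leaving $c_1=\langle u,v_2\rangle/\langle v_1,v_2\rangle$. Pairing with $v_1$ gives $c_2$ by symmetry, and pairing with $v_3$ gives $c_3=\langle u,v_3\rangle/\langle v_3,v_3\rangle$ since $\langle v_1,v_3\rangle=\langle v_2,v_3\rangle=0$. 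The $w+\bar w$ shorthand in \eqref{eq:elliptic_sum_one} is then just the observation that the first two summands, when pulled out of the expansion, are complex conjugates of each other.

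For part~(ii), the key preliminary step is to upgrade Theorem~\ref{thm:classification_of_isometries}(iii) to a fully orthogonal eigenbasis with the stated signs. I would first establish the general orthogonality principle: if $Mu=\lambda u$ and $Mv=\mu v$ with $|\lambda|=|\mu|=1$ and $\lambda\ne\mu$, then $\langle u,v\rangle=\langle Mu,Mv\rangle=\lambda\bar\mu\langle u,v\rangle$, and $\lambda\bar\mu=\lambda\mu^{-1}\ne 1$ forces $\langle u,v\rangle=0$. In the regular-elliptic case this pairwise orthogonality, combined with the sign information from Theorem~\ref{thm:classification_of_isometries}(iii)(a), gives the claimed basis directly. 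In the reflection-about-a-point case, the same argument handles orthogonality between $v_1$ and the repeated eigenspace; inside that eigenspace the restriction of the form is positive definite, so any orthogonal basis will do. After that, the pairing argument of part~(i) applies verbatim, now using $\langle v_j,v_k\rangle=0$ for all $j\ne k$.

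The one subtle subcase I expect to be the main obstacle is the reflection-about-a-line case, where the repeated eigenspace is two-dimensional but the restricted form has signature $(1,1)$: there one must verify that an orthogonal pair $v_1<0$, $v_2>0$ can actually be found inside the eigenspace. This is a standard fact about indefinite Hermitian forms on a plane (pick any negative vector $v_1$ and take its orthogonal complement within the eigenspace, which is forced to be positive by Sylvester), but it is the one genuine linear-algebra input beyond the spectral orthogonality above. Combined with the orthogonality of $v_3$ to the repeated eigenspace, this completes the basis and the expansion/sum-to-one identities follow as before.
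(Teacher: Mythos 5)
Your proof is correct and follows exactly the route the paper has in mind: the paper prefaces the corollary with ``when $M\in SU(J)$ is diagonalizable, we can pick some eigenvectors as a basis of $\CC^3$; using the orthogonal relations above we have the following useful identities,'' and your argument simply carries out that plan in detail, using Theorem~\ref{thm:classification_of_isometries} for the non-orthogonal pairing structure in the loxodromic case and the invariance identity $\langle u,v\rangle=\lambda\bar\mu\langle u,v\rangle$ to produce the fully orthogonal eigenbasis in the elliptic cases. The only omission is the (implicit) hypothesis $\langle u,u\rangle\ne 0$ needed to divide in \eqref{eq:elliptic_sum_one} and \eqref{eq:lox_sum_one}, which the paper also leaves unstated.
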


\subsubsection{Complex Lines and Distance Formulas}
One can classify the totally geodesic subspaces of $\BB^n$ using the general theory of symmetric spaces:
\begin{theorem}[Section 3.1.11 of \cite{goldman1999complex}]
	The only totally geodesic subspaces of $\BB^n$ are either complex linear subspaces or totally real totally geodesic submanifolds. 
\end{theorem}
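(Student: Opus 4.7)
The plan is to use the general theory of symmetric spaces, since $\BB^n = SU(n,1)/S(U(n)\times U(1))$ is a rank-one Hermitian symmetric space of non-compact type. The key tool is the classical correspondence: through any point $p\in\BB^n$, totally geodesic submanifolds are in bijection with \emph{Lie triple systems} in $T_p\BB^n$, i.e.\ real subspaces $V$ closed under $(X,Y,Z)\mapsto R(X,Y)Z$, where $R$ denotes the Riemannian curvature tensor at $p$. By Lemma~\ref{lem:transitive_action}, $\PP U(J)$ acts transitively, so it suffices to carry out the analysis at a single basepoint, for example the origin in the unit ball model.

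First I would write down the curvature tensor of $\BB^n$ at the origin. Identifying $T_p\BB^n\cong\CC^n$ with its natural complex structure $J$ and Bergman metric $g$ (normalized so that the holomorphic sectional curvature equals $-1$), a standard computation gives
\[
R(X,Y)Z \;=\; -\tfrac{1}{4}\bigl(g(Y,Z)X - g(X,Z)Y + g(JY,Z)JX - g(JX,Z)JY - 2g(JX,Y)JZ\bigr).
\]
The first two summands always lie in any real subspace containing $X,Y,Z$; the content of the theorem is forced by the three $J$-terms.

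Next I would classify the Lie triple systems $V\subset T_p\BB^n$ using this formula. Suppose first there exist $X_0,Y_0\in V$ with $g(JX_0,Y_0)\ne 0$. Feeding $(X_0,Y_0,Z)$ into the fifth term for arbitrary $Z\in V$ forces $JZ\in V$, so $JV\subset V$ and $V$ is a complex subspace; conversely any complex subspace is manifestly closed under $R$. Otherwise $g(JX,Y)=0$ for all $X,Y\in V$, which means $JV\perp V$, i.e.\ $V$ is totally real; in that case the third, fourth, and fifth terms vanish identically on $V$, so $V$ is automatically a Lie triple system. This dichotomy is the heart of the argument.

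Finally I would translate back to submanifolds: the complex case integrates to a linearly embedded sub-ball $\BB^k \hookrightarrow \BB^n$ realized as a complex linear subspace in the projective model, while the totally real case integrates to a totally real totally geodesic submanifold, which in the ball model is isometric to a real hyperbolic space of the same real dimension. The main obstacle is the case analysis of the curvature tensor above: one has to be careful to rule out any ``mixed'' Lie triple system that is neither $J$-invariant nor totally real, and the precise coefficient structure of $R$ is exactly what excludes such intermediate possibilities.
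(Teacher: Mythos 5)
The paper does not actually prove this statement; it is quoted verbatim from Section~3.1.11 of Goldman's book, so there is no in-paper proof to compare against. That said, your route---reducing to a classification of Lie triple systems at a basepoint via transitivity, then examining the three $J$-terms in the constant-holomorphic-sectional-curvature form of $R$---is precisely the standard argument that underlies Goldman's statement, and the overall structure is sound.

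One step is too quick as written. In the case $g(JX_0,Y_0)\ne 0$, you claim that feeding $(X_0,Y_0,Z)$ into ``the fifth term'' forces $JZ\in V$ for arbitrary $Z\in V$. But the Lie triple condition only gives that the \emph{sum}
\[
g(JY_0,Z)\,JX_0 \;-\; g(JX_0,Z)\,JY_0 \;-\; 2\,g(JX_0,Y_0)\,JZ
\]
lies in $V$, not that each summand does, so you cannot isolate $JZ$ directly. The fix is to first specialize: taking $Z=X_0$ and using $g(JX_0,X_0)=0$ and $g(JY_0,X_0)=-g(JX_0,Y_0)$, the $J$-part collapses to $-3\,g(JX_0,Y_0)\,JX_0$, which forces $JX_0\in V$; the symmetric choice $Z=Y_0$ gives $JY_0\in V$. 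Only after that does the displayed sum, for general $Z\in V$, reduce modulo $V$ to $-2\,g(JX_0,Y_0)\,JZ$, whence $JZ\in V$ and $V$ is $J$-invariant. With that repair, the dichotomy (either some pair has $g(JX,Y)\ne0$ and $V$ is complex, or $g(JX,Y)\equiv0$ on $V$ and $V$ is totally real, in which case all three $J$-terms vanish identically) is complete, and the translation back to complex linear sub-balls versus totally real totally geodesic submanifolds is as you describe.
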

When $n=2$, they are complex lines and totally real Lagrangian planes. Complex lines are the usual complex lines in $\CC^2$ that intersect the open unit 2-ball when we consider $\BB^2$ as the open unit 2-ball. Equivalently, a complex line $L\subset \BB^2$ is the projectivization of the intersection between a 2-dimensional linear subspace of $V=(\CC^3, J)$ and the negative cone $V_-$. Therefore, it can be determined by its polar vector: $L=\{z\in\BB^2:\langle z,n\rangle=0 \}$ for some $n\in V_+$. Totally real Lagrangian planes are embedded copies of the real hyperbolic space $\HH^2_\RR$. When we consider the standard Hermitian form, they are the images under a $\PP U(J_{std})$-action of the space given by those points in $\BB^2$ with real coordinates. 

Then we list some distance formulas related to complex lines. 
\begin{proposition}[Corollary 6.7 of \cite{parker2003notes}]\label{prop:dist_point_line}
	Let $L$ be a complex line with polar vector $n$. For any $z\in \BB^2$, the distance $d(z,L)$ from $z$ to $L$ is given by
	\[
		\cosh^2\left(\frac{d(z,L)}{2}\right)=1-\ta(z,n)\ge 1. 
	\]
\end{proposition}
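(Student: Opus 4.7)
The plan is to use the orthogonal decomposition of $V$ afforded by the polar vector $n$ to identify the foot of the perpendicular from $z$ to $L$ explicitly, and then compute the Bergman distance directly via the tance function.

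Since $\langle n,n\rangle > 0$, the ambient Hermitian space splits orthogonally as $V = n^\perp \oplus \CC n$, where $n^\perp$ is a 2-dimensional subspace on which the restricted form has signature $(1,1)$ and whose projectivization inside $V_-$ is exactly $L$. I would write $z = z_L + \lambda n$ with $\lambda = \langle z,n\rangle / \langle n,n\rangle$, so that $\langle z_L, n\rangle = 0$. Orthogonality immediately gives
\[
    \langle z, z\rangle \;=\; \langle z_L, z_L\rangle + |\lambda|^2 \langle n, n\rangle,
\]
and since $\langle z,z\rangle < 0$ while $|\lambda|^2 \langle n, n\rangle \ge 0$, this forces $\langle z_L, z_L\rangle < 0$, so $[z_L] \in L$.

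Next I would show that $z_L$ realizes the distance. For any $z' \in V_-$ with $\langle z', n\rangle = 0$, the same orthogonality yields $\langle z, z'\rangle = \langle z_L, z'\rangle$, hence
\[
    \ta(z, z') \;=\; \ta(z_L, z') \cdot \frac{\langle z_L, z_L\rangle}{\langle z, z\rangle}.
\]
The second factor depends only on $z$ and $n$, and the first factor satisfies $\ta(z_L, z') \ge 1$ by property (iii) of the tance function applied inside the signature $(1,1)$ subspace $n^\perp$, with equality iff $z'$ is a scalar multiple of $z_L$. Since $\cosh^2(d/2) = \ta$ is strictly increasing in $d$, this minimization gives $d(z, L) = d(z, z_L)$, and therefore
\[
    \cosh^2\!\Bigl(\tfrac{d(z,L)}{2}\Bigr) \;=\; \frac{\langle z_L, z_L\rangle}{\langle z, z\rangle} \;=\; 1 - \frac{|\langle z, n\rangle|^2}{\langle z, z\rangle \, \langle n, n\rangle} \;=\; 1 - \ta(z, n),
\]
where the middle equality uses $\langle z_L, z_L\rangle = \langle z, z\rangle - |\lambda|^2\langle n,n\rangle$ together with $|\lambda|^2 \langle n,n\rangle = |\langle z, n\rangle|^2 / \langle n,n\rangle$.

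The main obstacle is verifying that orthogonal projection onto $n^\perp$ actually produces the metric-nearest point on $L$. This relies on the reverse Cauchy--Schwarz phenomenon for negative vectors in a signature $(1,1)$ form, which is packaged cleanly in property (iii) of the tance function. Some care with signs is required here, since $\langle z,z\rangle$ and $\langle z',z'\rangle$ are negative whereas $\langle n,n\rangle$ is positive, so one must check that the inequality $\ta(z_L,z') \ge 1$ indeed corresponds to minimizing the distance at $z' = z_L$ rather than maximizing it.
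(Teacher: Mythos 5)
Your proof is correct, and it is the standard argument for this formula. The paper does not supply its own proof here—it simply cites Corollary 6.7 of Parker's notes—but the argument you give (orthogonal decomposition $V = n^{\perp}\oplus\CC n$, identification of the foot of the perpendicular as the projection $z_L$, multiplicativity $\ta(z,z')=\ta(z_L,z')\cdot\langle z_L,z_L\rangle/\langle z,z\rangle$ for $z'\perp n$, and then minimization using $\ta(z_L,z')\ge 1$ with equality iff $[z']=[z_L]$) is precisely the proof one finds in Parker and Goldman. All the sign checks go through: both $\langle z,z\rangle$ and $\langle z_L,z_L\rangle$ are negative so the constant factor is positive, $\ta(z,n)\le 0$ by property (iii) since $z<0$ and $n>0$, and the equality case of $\ta\ge 1$ on the signature-$(1,1)$ slice is exactly $[z']=[z_L]$ because $\ta=\cosh^2(d/2)=1$ forces $d=0$.
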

\begin{proposition}[Section 3.3.2 of \cite{goldman1999complex} or Proposition 6.8 of \cite{parker2003notes}]
	Let $L_1,L_2$ be complex lines with polar vector $n_1,n_2$. Then
	\begin{enumerate}[label=(\roman*)]
		\item If $\ta(n_1,n_2) > 1$, then $L_1$ and $L_2$ do not intersect in $\BB^2 \cup \partial\BB^2$ (which is called ultraparallel) and
		\[
			\cosh^2\left(\frac{d(L_1,L_2)}{2}\right)=\ta(n_1,n_2) >1.
		\]
		\item If $\ta(n_1,n_2) = 1$, then $L_1$ and $L_2$ intersect on $\partial\BB^2$ (which is called asymptotic) or coincide. 
		\item If $\ta(n_1,n_2) < 1$, then $L_1$ and $L_2$ intersect in $\BB^2$. 
	\end{enumerate}
\end{proposition}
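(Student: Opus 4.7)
The plan is to use the duality between complex lines in $\BB^2$ and positive vectors in $V$ via the polar map. Given $n_1, n_2 \in V_+$ (assumed linearly independent, else $L_1 = L_2$), the set-theoretic intersection $L_1 \cap L_2$ viewed inside $\PP V$ corresponds to the projectivization of the 1-dimensional orthogonal complement $W^\perp$, where $W := \mathrm{span}_\CC(n_1, n_2)$. Whether this point lies in $\BB^2$, on $\partial\BB^2$, or outside $\overline{\BB^2}$ is dictated by the sign of $\langle z, z\rangle$ for any $z$ spanning $W^\perp$, which in turn is determined by the signature of $h|_W$ since $h$ has signature $(2,1)$ on $V$.

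The first main computation is the Gram determinant
\[ \det \begin{pmatrix} \langle n_1, n_1\rangle & \langle n_1, n_2\rangle \\ \langle n_2, n_1\rangle & \langle n_2, n_2\rangle \end{pmatrix} = \langle n_1, n_1\rangle \langle n_2, n_2\rangle\bigl(1 - \ta(n_1, n_2)\bigr). \]
Since $\langle n_i, n_i\rangle > 0$, the sign of this determinant (and hence the signature type of $h|_W$) is governed by the sign of $1 - \ta(n_1, n_2)$: positive, zero, or negative gives respectively $h|_W$ positive definite (forcing $h|_{W^\perp}$ negative, intersection in $\BB^2$), degenerate (null $W^\perp$, intersection on $\partial\BB^2$, or coincident lines in the degenerate rank-one case), or of signature $(1,1)$ (positive $W^\perp$, ultraparallel). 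This establishes the trichotomy (i)--(iii).

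For the distance formula in the ultraparallel case, I would apply Proposition~\ref{prop:dist_point_line} pointwise: $\cosh^2(d(z, L_2)/2) = 1 - \ta(z, n_2)$ for $z \in L_1 \cap \BB^2$, and minimizing this over $z \in L_1 \cap \BB^2$ gives $d(L_1, L_2)$. Decompose $n_2 = \tfrac{\langle n_2, n_1\rangle}{\langle n_1, n_1\rangle} n_1 + m$ with $m \perp n_1$; then $\langle z, n_2\rangle = \langle z, m\rangle$ for $z \in L_1$, and a direct computation yields $\langle m, m\rangle = \langle n_2, n_2\rangle(1 - \ta(n_1, n_2)) < 0$, so $m$ itself represents a point of $L_1 \cap \BB^2$, namely the foot of the common perpendicular. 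Writing a general $z \in L_1$ as $\alpha m + \beta m'$ with $m'$ a positive vector orthogonal to $m$ inside $n_1^\perp$ reduces the problem to a one-variable optimization in the ratio $|\beta/\alpha|^2$, whose minimum is attained at $\beta = 0$, i.e., $z = m$; substituting back gives $\ta(m, n_2) = 1 - \ta(n_1, n_2)$, hence $\cosh^2(d(L_1, L_2)/2) = \ta(n_1, n_2)$.

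The main technical point is the final optimization: although elementary, one has to verify that the infimum over $z \in L_1 \cap \BB^2$ is actually attained at the interior point $z = m$ rather than escaping to $\partial L_1$, and check that the resulting value is symmetric in the roles of $L_1$ and $L_2$ (which it is, since the answer depends only on $\ta(n_1, n_2)$). The trichotomy itself is essentially linear algebra and requires no serious obstacle beyond the signature bookkeeping above.
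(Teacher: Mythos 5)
Your proof is correct, and it follows the same standard argument as the cited references (Goldman \S 3.3.2, Parker Prop.~6.8); the paper itself does not prove this proposition but only cites it, so there is no internal proof to compare against. For the record, both key computations check out: the Gram determinant identity $\langle n_1,n_1\rangle\langle n_2,n_2\rangle(1-\ta(n_1,n_2))$ combined with Sylvester's criterion (the $(1,1)$-entry $\langle n_1,n_1\rangle>0$ pins down the sign pattern) correctly yields the signature trichotomy for $h|_W$ and hence for $h|_{W^\perp}$; and in the ultraparallel case, writing $z=\alpha m+\beta m'$ with $m\perp n_1$, $m'\perp n_1$, $m'\perp m$, $\langle m,m\rangle<0$, $\langle m',m'\rangle>0$, one gets $\langle z,n_2\rangle=\alpha\langle m,m\rangle$ and $\langle z,z\rangle=|\alpha|^2\langle m,m\rangle+|\beta|^2\langle m',m'\rangle$, from which $1-\ta(z,n_2)$ is monotone in $|\beta/\alpha|^2$ and is minimized at $\beta=0$, giving exactly $\ta(n_1,n_2)$ and an interior minimizer $\PP m$ as you claim. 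The only point worth emphasizing explicitly in case~(ii) is that when the Gram matrix is singular but $n_1,n_2$ are independent, the null vector $v\in W$ with $v\perp W$ lies in the $1$-dimensional $W^\perp$ and hence spans it, forcing $\langle v,v\rangle=0$; you gesture at this but it is the crux of placing the intersection on $\partial\BB^2$ rather than merely noting degeneracy.
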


\subsection{Arithmetic Quotient of Type I}

Let $K$ be a totally real number field of degree $m$. Consider a CM extension $F:=K(\sqrt{\alpha})$ for some totally negative $\alpha\in K$. The embeddings come in complex conjugate pairs: $\sigma_1,\bar{\sigma}_1,\cdots,\sigma_m,\bar{\sigma}_m$. Then, take a Hermitian form $h(\cdot,\cdot)=\langle \cdot,\cdot\rangle$ in $n+1$ variables with coefficients in $F$, i.e.
\[\langle z,w\rangle := w^H J z, \ \ z,w\in F^{n+1} \]
for some Hermitian matrix $J=J^H\in M_{n+1}(F)$, such that for the first pair of embeddings $(\sigma_1,\bar{\sigma}_1)$, $h^{\sigma_1}$ and $h^{\bar{\sigma}_1}$ are of signature $(n,1)$, and for all other pairs of embeddings $(\sigma, \bar{\sigma})=(\sigma_2,\bar{\sigma}_2),\cdots,(\sigma_m,\bar{\sigma}_m)$, $h^\sigma$ and $h^{\bar{\sigma}}$ are definite. Then under the first pair of embeddings, these define an arithmetic lattice $\Gamma:=SU(h^{\sigma_1},\OO_F):=SU(h^{\sigma_1})\cap SL_3(\OO_F) $ and the Shimura variety $X:=\BB^n / \Gamma $. We will focus on the case where $K=\QQ(\sqrt{D})$ is a real quadratic field with discriminant $D > 0$. We identify $F$ with $\sigma_1(F)$. 

The lattices constructed in this way are cocompact since $SU(n+1)$ is compact and then we can apply \cite[Theorem 5.2]{mcreynolds2011arithmetic} or \cite[Lemma 8.1.3]{maclachlan2013arithmetic}. Therefore, by Godement's compactness criterion, $\Gamma$ has no nontrivial unipotent elements. Furthermore, this implies that there is no parabolic element in $\Gamma$ by Theorem~\ref{thm:classification_of_isometries_resultant}.

\section{Geometry of Elliptic Points}\label{sec:ell_pts}

Now we will focus on the situation in the main theorem. We fix the setting and notation as follows:

\myquote{
	$K:=\QQ(\sqrt{D})$ with discriminant $D>0$. $F:=K(\sqrt{\alpha})$ a CM extension with the norm of the relative discriminant $D_2 > D'_0$ for some fixed $D'_0$ such that there is no non-totally real integer $z\in\OO_F$ with $|z|\le 3$ in all embeddings. Denote its complex embeddings $\sigma_1,\bar{\sigma}_1,\sigma_2,\bar{\sigma}_2$. Take a Hermitian form $h(\cdot,\cdot)=\langle \cdot,\cdot\rangle$ represented by a matrix $J$ on $F^3$ with coefficients in $F$ such that $h^{\sigma_1}$ and $h^{\bar{\sigma}_1}$ are of signature $(2,1)$, and $h^{\sigma_2}$ and $h^{\bar{\sigma}_2}$ are definite. Consider the arithmetic lattice $\Gamma:=SU(h^{\sigma_1},\OO_F):=SU(h^{\sigma_1})\cap SL_3(\OO_F) $ and the arithmetic quotient $X:=\Gamma\backslash \BB^2 $. 
	}\label{notation}

\subsection{Repulsion of Elliptic Points}\label{subsec:repulsion_ell_pts}

This section is dedicated to show repulsion of elliptic points, for example, isolated elliptic points tend to be far apart when the discriminant $D$ of the real quadratic field defining the space is big. Such repulsion phenomena for Shimura varieties have been shown in certain cases in terms of level structures. For instance, \cite{bakker2016p} showed the repulsion of cusps and special subvarieties for products of modular curves in terms of the level structure, and \cite{bakker2018geometric} proved similar results for cusps and points of Hilbert modular varieties. 

Note that in this section we are showing the repulsion of the preimages of elliptic points of $X$ in $\BB^2$, i.e. the points in $\BB^2$ stabilized by some elements in $\Gamma$. The repulsion can descend to $X$ since the action of $\Gamma$ is properly discontinuous, and it is useful when we derive the multiplicity bounds in Section~\ref{sec:multiplicity_bounds}. 

Now, we call points (resp. complex lines) in $\BB^2$ (or their images in $X$) stabilized by an elliptic element inside the arithmetic lattice \emph{elliptic points} (resp. \emph{elliptic complex lines}), and an elliptic point $u$ is an \emph{isolated elliptic point} if for some elliptic element $M$ inside its stabilizer group, $u$ is an isolated point in the locus that $M$ stabilizes. The precise statement of repulsion of isolated elliptic points is the following.

\begin{proposition}\label{prop:repulsion}
Follow the notation in (\ref{notation}). For any given $R>0$, there exists $D_0(R)>0$ such that when $D>D_0(R)$, if $u$ and $v$ are distinct isolated elliptic points on $\BB^2$, then their distance $d_{\BB^2}(u,v) > R$. 
\end{proposition}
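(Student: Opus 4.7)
My plan is a Zassenhaus-style arithmetic rigidity argument. Suppose for contradiction that $u\ne v$ are distinct isolated elliptic points with $d_{\BB^2}(u,v)\le R$. By Theorem~\ref{thm:classification_of_isometries} pick elliptic stabilizers $M_u,M_v\in\Gamma$, each regular elliptic or a complex reflection about a point, so that $u$ (resp.\ $v$) is the only fixed point of $M_u$ (resp.\ $M_v$) in $\BB^2$. In particular $M_v u\ne u$, and since $M_v$ is an isometry fixing $v$ the triangle inequality gives $d(u,M_v u)\le 2d(u,v)\le 2R$.

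First I would exploit arithmeticity at the level of traces. The element $\sigma_1(M_v)$ being elliptic has eigenvalues of modulus one, so $|\sigma_1(\trace M_v)|\le 3$; and since $h^{\sigma_2}$ is definite, $\sigma_2(M_v)$ lies in the compact unitary group of $h^{\sigma_2}$, so likewise $|\sigma_2(\trace M_v)|\le 3$. Thus $\trace(M_v)\in\OO_F$ has absolute value at most $3$ in all four archimedean embeddings. The standing assumption in~\eqref{notation} then forces $\trace(M_v)\in\OO_K$; writing $\trace(M_v)=a+b\sqrt D$, the inequalities $|a\pm b\sqrt D|\le 3$ yield $|b|\le 3/\sqrt D$, which for $D$ larger than an absolute constant forces $b=0$, and so $\trace(M_v)\in\ZZ\cap[-3,3]$. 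By Theorem~\ref{thm:classification_of_isometries_resultant} this restricts $\sigma_1(M_v)$ to a short list of conjugacy classes (of orders $2,3,4$ or $6$), in each of which $1$ appears as an eigenvalue.

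The trace argument does not yet localize the fixed point, so the next step is to manufacture an algebraic integer $z\in\OO_F$ that vanishes exactly when $u$ is an eigenline of $M_v$ (equivalently $M_v u=u$ in $\BB^2$), is quantitatively small in the $\sigma_1$-embedding in terms of $d(u,M_v u)$, and is bounded by an absolute constant in the $\sigma_2$-embedding. After fixing an integral lift $\tilde u$ of $u$ in $\OO_{F'}^3$ for the splitting field $F'$ of $M_u$'s characteristic polynomial, a candidate is a $2\times 2$ minor of $[\tilde u\mid M_v\tilde u]$, descended to $\OO_F$ by pairing with the other eigenvectors of $M_u$ or by taking a Galois norm down to $F$. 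Under $\sigma_1$ the distance formula
\[
\cosh^2\!\bigl(d(u,M_v u)/2\bigr)=\ta(u,M_v u)
\]
forces $|\sigma_1(z)|$ to tend to $0$ with $R$; under $\sigma_2$, Cauchy--Schwarz for the definite form $h^{\sigma_2}$ together with compactness of its unitary group bound $|\sigma_2(z)|$ by a constant depending only on $h^{\sigma_2}$ and the chosen lift. For $R$ small and $D$ large in terms of $R$, both bounds sit below $3$, and the same $D'_0$/$D_0$ dichotomy applied now to $z$ forces $z\in\OO_K$, then $z\in\ZZ$, and finally $|\sigma_1(z)|<1$ forces $z=0$. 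Hence $u$ is an eigenline of $M_v$; the only eigenline of $M_v$ in $\BB^2$ is $v$, so $u=v$, contradicting our assumption.

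The main obstacle is producing the integral witness $z$ of the previous paragraph. Naive constructions from $u$ and $M_v$ land in $\bar F$ rather than $\OO_F$, because $u$ is only an eigenvector of $M_u$ over the splitting field of its characteristic polynomial; the descent to $\OO_F$ (by pairing with the other $M_u$-eigenvectors or by a Galois norm) must be performed carefully, and the $\sigma_2$-bound must be kept within the ``$\le 3$'' regime calibrated in~\eqref{notation}. The match between this cutoff ``$3$'' and the elliptic trace bound strongly suggests the author works exclusively with such ``trace-scale'' quantities, and the interplay with the distance formulas and sum-to-one identities of Corollary~\ref{cor:classification_of_isometries} plays a central role in converting the geometric input $d(u,v)\le R$ into the required archimedean bound on $\sigma_1(z)$.
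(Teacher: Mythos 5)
Your instincts about arithmetic rigidity and bounding integral quantities at all archimedean places point in the right direction, but the proposal has a genuine gap exactly at the place you flag as ``the main obstacle'': producing an algebraic integer $z\in\OO_F$ that witnesses $M_v u\ne u$ and is bounded in all embeddings. Your candidate --- a $2\times 2$ minor of $[\tilde u\mid M_v\tilde u]$ for an integral lift $\tilde u$, descended by pairing or by Galois norm --- cannot give the required $\sigma_2$-bound by an absolute constant. The lift $\tilde u$ of an eigenvector of $M_u$ has no a priori size control: its entries depend on $M_u$, which varies with $D$. The minor is therefore integral, but its $\sigma_2$-modulus scales like $|\tilde u|^2$, not like a fixed constant, and a Galois-norm descent makes this worse by multiplying several such uncontrolled factors. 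Without a $D$-independent $\sigma_2$-bound, you cannot run the ``integral and bounded at all places hence rational hence small hence zero'' step, and the argument does not close.

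The paper avoids this by taking the integral invariant to be $\trace(M_1M_2)$, the trace of the \emph{product} of the two stabilizers. This is automatically in $\OO_F$ (since $M_1M_2\in\Gamma$) and automatically bounded at each archimedean place (since $M_1M_2$ lies in the corresponding special unitary group there, definite at $\sigma_2$). The decisive identity \eqref{eq:trace_sum_of_ta},
\[
\trace(M_1M_2)=\sum_{j,k=1}^3\lambda_j\tilde\lambda_k\,\ta(u_j,v_k),
\]
combined with the sum-to-one relations \eqref{eq:expand}, couples the trace directly to $\ta(u_3,v_3)=\cosh^2(d(u,v)/2)$, so bounding $\trace(M_1M_2)$ \emph{does} localize the fixed point --- you correctly observed that $\trace(M_v)$ alone does not, but the missing move was to pass to $\trace(M_uM_v)$. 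The paper then uses the $D'$ hypothesis to force $\trace(M_1M_2)\in\RR$, deduces symmetry relations on the $\ta(u_j,v_k)$, and runs a case analysis over which eigenvectors are negative to conclude $\ta(u_3,v_3)=1$, i.e.\ $u=v$. Two further structural points your sketch omits: the paper first reduces to the case where $M_1$ and $M_2$ have the same eigenvalue triple and recovers the general case by a triangle-inequality trick; and the final contradiction is not $z=0$ but a direct forcing of the tance to its minimum value.
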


We first assume that $u$ and $v$ are stabilized by $M_1$ and $M_2$ in $\Gamma$ respectively, where $M_1$ and $M_2$ have the same set of eigenvalues. First we identify possible orders of elliptic elements in this context.

\begin{lemma}\label{lem:ellclass}
For $K:=\QQ(\sqrt{D})$ with discriminant $D>0$ and $F:=K(\sqrt{\alpha})$ a CM extension, the maximal possible order of elliptic elements is $18$ if $D>21$. If we further assume that $F\ne K(\sqrt{\alpha})$ for $\alpha=-1,-2,-3,-7$, then the only possible triples of eigenvalues are $(1,-1,-1), (1,\omega, \omega^2),(1,i,-i),(1,-\omega, -\omega^2)$.
\end{lemma}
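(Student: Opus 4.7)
The plan is to combine Kronecker's theorem on algebraic integers with unit-circle absolute values and Kronecker--Weber to constrain which roots of unity can appear as eigenvalues of elliptic $M\in\Gamma$, then translate into bounds on orders.

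First I would verify that every eigenvalue of an elliptic $M\in\Gamma$ is a root of unity. The eigenvalues are algebraic integers since $M\in SL_3(\OO_F)$. Under $\sigma_1$, the matrix $M^{\sigma_1}$ is elliptic and diagonalisable with eigenvalues of modulus $1$ by Theorem~\ref{thm:classification_of_isometries}. Crucially, since $h^{\sigma_2}$ is definite, $M^{\sigma_2}$ preserves a positive-definite Hermitian form and lies in a compact unitary group, so its eigenvalues also have modulus $1$. Together with complex conjugation this shows every archimedean absolute value of each eigenvalue (and every Galois conjugate) equals $1$, so by Kronecker's theorem each eigenvalue is a root of unity.

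Next I would bound the orders. For an eigenvalue of order $n$, its minimal polynomial over $F$ divides the cubic characteristic polynomial of $M$, so
\[
[F(\zeta_n):F] \;=\; \frac{\phi(n)}{[F\cap\QQ(\zeta_n):\QQ]} \;\leq\; 3.
\]
I would case-split on $L:=F\cap\QQ(\zeta_n)$, a subfield of the quartic CM field $F$. When $L=\QQ$ we get $\phi(n)\leq 3$ and $n\in\{1,2,3,4,6\}$. When $L$ is quadratic, $\phi(n)\leq 6$; the real quadratic field $K=\QQ(\sqrt{D})$ has conductor $D>21$ and so cannot lie in $\QQ(\zeta_n)$ for any $n$ with $\phi(n)\leq 6$ (every fundamental discriminant $D>21$ has $\phi(D)\geq 8$), so $L$ must be an imaginary quadratic subfield of $F$; pairing Kronecker--Weber with $\phi(n)\leq 6$ leaves only $L\in\{\QQ(i),\QQ(\sqrt{-2}),\QQ(\sqrt{-3}),\QQ(\sqrt{-7})\}$, corresponding to $\alpha\in\{-1,-2,-3,-7\}$ up to squares, with maximal $n$ equal to $12,\,8,\,18,\,14$ respectively. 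When $L=F$, i.e.\ $F\subseteq\QQ(\zeta_n)$, a direct enumeration over $n$ with $\phi(n)\in\{8,12\}$ shows that every quartic CM subfield of $\QQ(\zeta_n)$ has real part with $D\leq 21$, contradicting the hypothesis. Thus the eigenvalue order is at most $18$; since when one eigenvalue has minimal polynomial of degree $3$ over $F$ all three eigenvalues are $F$-conjugate of the same order, and otherwise the remaining eigenvalue lies in $F$ and is a root of unity of order at most $6$, the order of $M$ itself is at most $18$.

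For the additional statement assume $\alpha\notin\{-1,-2,-3,-7\}$. The case analysis above collapses to $L=\QQ$, so every eigenvalue has order in $\{1,2,3,4,6\}$ and lies in $\{\pm 1,\pm\omega,\pm\omega^2,\pm i\}$. The hypothesis from (\ref{notation}) that $\OO_F$ contains no non-totally-real integer of absolute value $\leq 3$ in all embeddings, applied to $\trace(M)$ (a sum of three unit-modulus numbers in each archimedean embedding), forces $\trace(M)\in K$. Then $\bar t=t$ and the characteristic polynomial factors as $(x-1)\bigl(x^2+(1-t)x+1\bigr)$, automatically producing $1$ as an eigenvalue and the other two as a pair $\lambda,\lambda^{-1}$ with $\lambda$ of order in $\{1,2,3,4,6\}$; enumerating $\lambda$ yields exactly the four listed non-identity triples. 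The main obstacle is the bookkeeping in the quadratic-intersection case, which requires systematically pairing $\phi(n)\leq 6$ with Kronecker--Weber conductor data to pin down the $(\alpha,n)$ list precisely, together with the degree-$4$ intersection case where all CM quartic subfields of the finitely many relevant cyclotomic fields must be checked individually.
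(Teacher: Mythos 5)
Your overall strategy matches the paper's: eigenvalues of an elliptic $M$ are roots of unity (the paper gets this from properly discontinuity giving finite order, rather than Kronecker, but both are fine), and one bounds the order via $[F(\zeta_n):F]\le 3$ because the splitting field of the cubic characteristic polynomial over $F$ has abelian Galois group; your case split on $L=F\cap\QQ(\zeta_n)$ is just a reorganization of the paper's split on $[F(\{\lambda_j\}):\QQ]\in\{4,8,12\}$.

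There is, however, a genuine gap in your passage from ``eigenvalue order $\le 18$'' to ``order of $M$ $\le 18$.'' You set $n$ to be the order of a single eigenvalue, and in the case where that eigenvalue has degree $2$ over $F$ you claim the third eigenvalue lies in $F$ with order $m\le 6$ and then immediately conclude the order of $M$ is $\le 18$. But $\mathrm{ord}(M)=\mathrm{lcm}(n,m)$, and e.g.\ $n=8$, $m=3$ gives $\mathrm{lcm}=24$; this scenario is not obviously excluded by $D>21$ alone, since $F=\QQ(\sqrt{-2},\sqrt{-3})$ has $K=\QQ(\sqrt 6)$ with $D=24>21$. What kills it is the determinant constraint $\lambda_1\lambda_2\lambda_3=1$: the third eigenvalue equals $(\lambda_1\lambda_2)^{-1}$, which is itself an $n$-th root of unity, so $m\mid n$ and the lcm is just $n$. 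You never invoke this. The paper sidesteps the issue by taking $n$ to be the order of $M$ from the outset (so $\QQ(\zeta_n)\subset F(\{\lambda_j\})$ automatically) and applying the same degree bound directly to that $n$; you should either do the same or explicitly use $\prod\lambda_j=1$ when combining eigenvalue orders. Two smaller points: in the $L=F$ case you should also include $\phi(n)=4$ (i.e.\ $F=\QQ(\zeta_n)$ with $n\in\{5,8,10,12\}$, each forcing $D\le 12$); and in the final triple classification you appeal to the extra hypothesis from (\ref{notation}) on non-totally-real integers of small absolute value, which is not among the stated hypotheses of this lemma --- it is also unnecessary, since once the eigenvalue orders are confined to $\{1,2,3,4,6\}$ and $\alpha\notin\{-1,-2,-3\}$ one has $F\cap\QQ(\zeta_{12})=\QQ$, so $\trace(M)\in\OO_F\cap\ZZ[\zeta_{12}]=\ZZ$ by itself, which together with $\lambda_1\lambda_2\lambda_3=1$ yields the four listed triples (this is what the paper does).
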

\begin{proof}
Since $\Gamma$ acts properly discontinuously, the order $n$ of an elliptic element $M\in\Gamma$ is finite, and the eigenvalues $\{\lambda_1,\lambda_2,\lambda_3\}$ of $M$ are n-th roots of unity. Therefore, the splitting field $F(\{\lambda_j\}_j)$ of the characteristic polynomial of $M$ over $F$ should include the n-th cyclotomic field, but the possible values of $[F(\{\lambda_j\}_j):\QQ]$ are 4,8,12, so $n$ can only be a divisor of them and now there are only finitely many cases. When $[F(\{\lambda_j\}_j):\QQ]$ is 8 or 12, $F(\{\lambda_j\}_j)=\QQ(\zeta_n)$ must contain $K:=\QQ(\sqrt{D})$, which is impossible when $D>21$ ($n=21$ is the maximal odd integer with $[\QQ(\zeta_n):\QQ]\le 12$). Assuming $[\QQ(\zeta_n):\QQ]$ is 4 or 6 and $D>9$, $\QQ(\zeta_n)$ adjoining $\sqrt{D}$ will be a nontrivial field extension and it must give $F(\{\lambda_j\}_j)$, so $\QQ(\zeta_n)$ must contain $\sqrt{\alpha}$. Now we can check each case: $n$ cannot be 5 or 10 as $\QQ(\zeta_5)$ contains no imaginary quadratic field; $n$ can be 8 only when $F=K(\sqrt{-2})$ or $K(i)$; $n$ can be 12 only when $F=K(\sqrt{-3}) or K(i)$; $n$ can be 7 or 14 only when $F=K(\sqrt{-7})$; $n$ can be 9 or 18 only when $F=K(\sqrt{-3})$. 

Thus, when $F\ne K(\sqrt{\alpha})$ for $\alpha=-1,-2,-3,-7$, the only possible values of $n$ are 2,3,4,6. Applying $\lambda_1\lambda_2\lambda_3=1$ and $\lambda_1+\lambda_2+\lambda_3\in\OO_F$, we finish the classification of possible triples of eigenvalues. 
\end{proof}

From \ref{section:classify_isometries}, we know that when the triple of eigenvalues is $(1,-1,-1)$ and there is a negative eigenvector corresponds to eigenvalue $-1$, the elliptic element will stabilize a complex line given by the $(-1)$-eigenspace. Except this case, the other types of elliptic elements correspond to isolated elliptic points. 

Come back to the setting of Proposition~\ref{prop:repulsion}. By Theorem~\ref{thm:classification_of_isometries}, we can let $\{u_j\}_{j=1,2,3} \in  F(\{\lambda_j\}_{j=1,2,3})$ (resp. $\{v_j\}_{j=1,2,3}$) be the pairwise orthogonal eigenvectors of the elliptic element $M_1$ (resp. $M_2$), and $\{\lambda_j\}_{j=1,2,3}$ (resp. $\{\tilde{\lambda}_j\}_{j=1,2,3}$) are the corresponding eigenvalues. The arithmetic of the Hermitian form gives the following.

\begin{lemma}\label{lem:trace_bound_3}
Follow the notation in (\ref{notation}). For any given $R>0$, there exists $D_0(R)>0$ such that when $D>D_0(R)$, $|\trace(M_1M_2)|^2$ and $2\Re(\trace(M_1M_2))$ are rational integers, and $\Re(\trace(M_1M_2)) \le |\trace(M_1M_2)| \le 3$. 
\end{lemma}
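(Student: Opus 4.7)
The lemma will be used inside the proof of Proposition~\ref{prop:repulsion}, so I may assume throughout that $d_{\BB^2}(u,v) \le R$. Write $z := \trace(M_1 M_2) \in \OO_F$. The plan is to bound $|z^{\sigma}|$ for each of the four complex embeddings $\sigma$ of $F$ by a constant depending only on $R$, and then, for $D$ large, force the relative norm $n := N_{F/K}(z)$ and relative trace $t := \mathrm{Tr}_{F/K}(z)$ to lie in $\ZZ$. The sharp bound $|z^{\sigma_1}| \le 3$ then comes almost for free, because once $n \in \ZZ$ its two embeddings agree and $|z^{\sigma_2}|^2 \le 9$ is automatic.

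The bounds at $\sigma_2,\bar{\sigma}_2$ are immediate: $SU(h^{\sigma_2}) \cong SU(3)$ is compact, so $M_1^{\sigma_2} M_2^{\sigma_2}$ has all eigenvalues on the unit circle, giving $|z^{\sigma_2}|,|z^{\bar{\sigma}_2}| \le 3$. For the bound at $\sigma_1$, I would conjugate by an element of $SU(h^{\sigma_1})$ sending $u$ to the origin of $\BB^2$; this preserves the trace. Then $M_1$ lies in the compact stabilizer $S(U(2)\times U(1))$ of the origin, $v$ lies in the compact closed ball $\overline{B}(0,R)$, and $M_2$ lies in the stabilizer of $v$, itself a compact subgroup of $SU(2,1)$ varying continuously with $v$. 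Hence $M_1 M_2$ ranges over a compact subset of $SU(2,1)$, so $|z^{\sigma_1}| = |z^{\bar{\sigma}_1}| \le C(R)$ for some constant depending only on $R$.

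Now set $n := z\bar z \in \OO_K$ and $t := z+\bar z \in \OO_K$, with $\bar{\cdot}$ the nontrivial element of $\mathrm{Gal}(F/K)$. For each real embedding $\sigma_j$ of $K$ we have $n^{\sigma_j} = |z^{\sigma_j}|^2$ and $t^{\sigma_j} = 2\Re(z^{\sigma_j})$, so both embeddings of $n$ are bounded by $\max(C(R)^2,9)$ and both of $t$ by $\max(2C(R),6)$. On the other hand, any $w \in \OO_K \setminus \ZZ$ satisfies $|w^{\sigma_1} - w^{\sigma_2}| \ge \sqrt{D}$, because $w - w^{\sigma_2}$ is a nonzero integer multiple of $\sqrt{D}$. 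Choosing $D_0(R)$ large enough that $\sqrt{D_0(R)}$ exceeds the sum of the two bounds, for $D > D_0(R)$ I obtain $n,t \in \ZZ$, which is exactly (1) and (2). Then $n \in \ZZ$ forces $|z^{\sigma_1}|^2 = n^{\sigma_1} = n^{\sigma_2} = |z^{\sigma_2}|^2 \le 9$, so $|\trace(M_1 M_2)| \le 3$; the inequality $\Re(z) \le |z|$ is trivial.

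The only nontrivial step is the compactness estimate $|z^{\sigma_1}| \le C(R)$: without the hypothesis $d(u,v) \le R$ the product $M_1^{\sigma_1} M_2^{\sigma_1}$ may be loxodromic in $SU(2,1)$ and its trace unbounded. Once this geometric input is in place, the rationality of $n$ and $t$ is a routine discreteness argument from the fact that two distinct real embeddings of an element of $\OO_K = \OO_{\QQ(\sqrt D)}$ must differ by at least $\sqrt{D}$.
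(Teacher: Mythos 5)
Your proof is correct and reaches the same conclusion by the same overall strategy as the paper: bound $z:=\trace(M_1M_2)\in\OO_F$ under every complex embedding, then use the discreteness of $\OO_K$ inside $\RR\times\RR$ to force $z\bar z$ and $z+\bar z$ into $\ZZ$ once $D$ is large, and finally read off $|\trace(M_1M_2)|\le 3$ from the compact embedding. The single step where you depart from the paper is the bound at $\sigma_1$: the paper diagonalizes $M_1$ and $M_2$ via their orthogonal eigenvector bases, expresses $\trace(M_1M_2)=\sum_{j,k}\lambda_j\tilde\lambda_k\,\ta(u_j,v_k)$, and then uses the identity $\sum_j\ta(v_k,u_j)=1$ together with the sign pattern of the tance function to get the explicit bound $|\trace(M_1M_2)|\le 4\ta(u_3,v_3)-1\le 4\cosh^2(R/2)-1$. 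You instead observe that, after conjugating $u$ to the origin, both $M_1$ and $M_2$ move the origin by at most $2R$ in $\BB^2$, hence lie in the compact subset $\{g\in SU(2,1):d(0,g\cdot 0)\le 2R\}$, and therefore $|\trace(M_1M_2)|$ is bounded by some $C(R)$. This soft argument is shorter and equally valid for the lemma, since the only thing needed downstream for the discreteness step is some uniform bound depending on $R$. What the paper's computational route buys is that the explicit formula $\trace(M_1M_2)=\sum_{j,k}\lambda_j\tilde\lambda_k\,\ta(u_j,v_k)$ and the identity $(\ref{eq:expand})$ are reused immediately in the proof of Proposition~\ref{prop:repulsion} to analyze the possible sign configurations of $\ta(u_j,v_k)$ case by case; the tance expansion is really the workhorse of that section, and deriving it while proving the lemma avoids redoing the computation. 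Your compactness argument, while cleaner for the lemma in isolation, would not supply those identities.
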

\begin{proof}
Since $\{u_j\}_{j=1,2,3}$ (resp. $\{v_j\}_{j=1,2,3}$) give a pairwise orthogonal basis, we can apply (\ref{eq:elliptic_expand}) to $v_k$ (resp. $u_k$ ) to get the following useful equation for any fixed $k\in\{1,2,3\}$:
\begin{equation}\label{eq:expand}
\sum_{j=1}^3 \ta(v_k, u_j) = \sum_{j=1}^3\frac{\langle v_k,u_j\rangle \langle u_j,v_k \rangle }{\langle u_j,u_j\rangle \langle v_k,v_k \rangle }=1  \ \ \  \left(resp. \sum_{j=1}^3 \ta(u_k, v_j)=\sum_{j=1}^3\frac{\langle u_k,v_j\rangle \langle v_j,u_k \rangle }{\langle v_j,v_j\rangle \langle u_k,u_k \rangle }=1\right).
\end{equation}

To compute $\trace(M_1M_2)$, first note that the orthogonal relations give the following:
\begin{align*}
\left(u_1\ u_2\ u_3\right)^H J \left(u_1\ u_2\ u_3\right) &= \begin{pmatrix}\langle u_1,u_1\rangle & & \\ & \langle u_2,u_2\rangle & \\ & & \langle u_3,u_3\rangle \\ \end{pmatrix} \\
\implies \left(u_1\ u_2\ u_3\right)^{-1} &= \begin{pmatrix}1/\langle u_1,u_1\rangle & & \\ & 1/\langle u_2,u_2\rangle & \\ & & 1/\langle u_3,u_3\rangle \\ \end{pmatrix} \left(u_1\ u_2\ u_3\right)^H J \\
\end{align*}
Using this we can rewrite the diagonalized $M_1$ and $M_2$ and get:
\begin{align*}
&\trace(M_1M_2)\\ =&\trace\left( \left(u_1\ u_2\ u_3\right)\begin{pmatrix}\lambda_1 & & \\ & \lambda_2 & \\ & & \lambda_3 \\ \end{pmatrix} \left(u_1\ u_2\ u_3\right)^{-1}\left(v_1\ v_2\ v_3\right)\begin{pmatrix}\tilde{\lambda}_1 & & \\ & \tilde{\lambda}_2 & \\ & & \tilde{\lambda}_3 \\ \end{pmatrix} \left(u_1\ u_2\ u_3\right)^{-1}  \right) \\
=& \trace\Bigg( \begin{pmatrix}\frac{\lambda_1}{\langle u_1,u_1\rangle} & & \\ & \frac{\lambda_2}{\langle u_2,u_2\rangle} & \\ & & \frac{\lambda_3}{\langle u_3,u_3\rangle} \\ \end{pmatrix} \left(u_1\ u_2\ u_3\right)^H J \left(v_1\ v_2\ v_3\right) \\
& \qquad  \begin{pmatrix}\frac{\tilde{\lambda}_1}{\langle v_1,v_1\rangle} & & \\ & \frac{\tilde{\lambda}_2}{\langle v_2,v_2\rangle} & \\ & & \frac{\tilde{\lambda}_3}{\langle v_3,v_3\rangle} \\ \end{pmatrix} \left(v_1\ v_2\ v_3\right)^H J \left(u_1\ u_2\ u_3\right) \Bigg)\\
=&\sum_{j,k=1}^3 \lambda_j\tilde{\lambda}_k \ta(u_j, v_k) \numberthis \label{eq:trace_sum_of_ta}   \\
\end{align*}

Until the end of this lemma, let $u_3$ (resp. $v_3$) denote the negative eigenvector, so the other two eigenvectors are positive, and $\ta(u_3, v_3)$ gives the distance between the elliptic points. This gives:
\begin{align*}
|\trace(M_1M_2)| &= \left|\sum_{j,k=1}^3 \lambda_j\tilde{\lambda}_k \ta(u_j, v_k)\right| \le \sum_{j,k=1}^3 \left| \ta(u_j, v_k)\right| & \\
&= \ta(u_3, v_3) + \sum_{j,k=1}^2 \ta(u_j, v_k) - \sum_{j=1}^2 (\ta(u_j, v_3)+\ta(u_3, v_j)) & \\
&= \ta(u_3, v_3) + (\ta(u_3, v_3)+1) - 2(1-\ta(u_3, v_3))  &\mathrm{by\ }(\ref{eq:expand}) \\
&= 4\ta(u_3, v_3)-1
\end{align*}

In our setting, we can extend the embeddings $F\xhookrightarrow{} \CC$ to $F(\lambda_j)\xhookrightarrow{}\CC$ while fixing the embeddings of the eigenvalues, and we use the same notation for these extended embeddings. Then the argument above fits into the embedding $\sigma_2$ that gives the definite hermitian form:
\begin{align*}
|\trace(M_1M_2)^{\sigma_2}| &= \left|\sum_{j,k=1}^3 \lambda_j\tilde{\lambda}_k \ta^{\sigma_2}(u_j^{\sigma_2}, v_k^{\sigma_2})\right| \le \sum_{j,k=1}^3 \left| \ta^{\sigma_2}(u_j^{\sigma_2}, v_k^{\sigma_2})\right| = 3 
\end{align*}
where the last equality is due to (\ref{eq:expand}).

Since $\ta(u_3, v_3)$ is bounded by $\cosh^{-1}(R/2)$, $|\trace(M_1M_2)|^2$ is an integral element in $\OO_{\QQ(\sqrt{D})}$ with both embeddings in $\RR$ nonnegative and bounded by fixed numbers. When $D$ is big, this is possible only when $|\trace(M_1M_2)|^2\in \ZZ$, and thus $|\trace(M_1M_2)|^2 = |\trace(M_1M_2)^{\sigma_2}|^2 \le 9 $. The same argument applies to $2\Re(\trace(M_1M_2))$. 

\end{proof}

Further investigation of $\trace(M_1M_2)$ will finish the proof. 

\begin{proof}[Proof of Proposition \ref{prop:repulsion}]
Since $|\trace(M_1M_2)|^2$ is a rational integer bounded by 9, when the norm of the relative discriminant of $F/K$ is large enough to make sure there is no non-totally real integer $z\in \OO_F$ with $|z|\le 3$ in all embeddings, $\trace(M_1M_2)$ will be real. Note that this can be independent from $D$. We first look at the case where $(\lambda_1, \lambda_2, \lambda_3)=(\tilde{\lambda}_1, \tilde{\lambda}_2, \tilde{\lambda}_3)=(\lambda, \bar{\lambda}, 1)$ for $e^{i\theta}=\lambda\not\in\RR$. Denote $t=\lambda+\bar{\lambda}$. Since $\ta(u_j,v_k)\in\RR$ for $j,k=1,2,3$, $\trace(M_1M_2)\in\RR$ means:
\begin{align*}
\Im(\trace(M_1M_2)) &= \sum_{j,k=1}^3 \Im(\lambda_j\tilde{\lambda}_k) \ta(u_j,v_k) \\
&= \sin(\theta)(\ta(u_1,v_3)+\ta(u_3,v_1))-\sin(\theta)(\ta(u_2,v_3)+\ta(u_3,v_2)) \\
&\qquad +\sin(2\theta)\ta(u_1,v_1)-\sin(2\theta)\ta(u_2,v_2)\\
&= \sin(\theta)(\cos(\theta)-1)(\ta(u_1,v_1)-\ta(u_2,v_2))=0 \\
\end{align*}
Since $\lambda\not\in\RR$, this implies that $a:=\ta(u_1,v_1)=\ta(u_2,v_2)$. Apply the same argument to $\Im(\trace(M_1^{-1}M_2))=0$, we obtain $b:=\ta(u_1,v_2)=\ta(u_2,v_1)$. Therefore, $\ta(u_j, v_3)$ and $\ta(u_3, v_j)$ are all equal to $1-a-b$. 

Now we check each case of possible pairs of negative eigenvectors. First, $(u_1, v_3)$ cannot be both negative, since otherwise $\ta(u_1, v_3)=\ta(u_2, v_3)=1-a-b$ should have opposite signs by definition, but they cannot be 0 as $\ta(u_1, v_3) \ge 1$. This also applies to $(u_2, v_3)$, $(u_3, v_1)$, and $(u_3, v_2)$. Then, suppose that $(u_1, v_1)$ are negative eigenvectors. It means that $a=\ta(u_1,v_1)\ge 1$, $b<0$, $\ta(u_3, v_3)=2a+2b-1\ge 0$, and $\ta(u_1, v_3)=\ta(u_2, v_3)=1-a-b$ should have opposite signs. Thus, $1-a-b=0$, and we have
\begin{align*}
\trace(M_1^{-1}M_2) &= \ta(u_3,v_3) + \ta(u_1,v_1) + \ta(u_2,v_2) + \bar{\lambda}^2\ta(u_1,v_2) + \lambda^2\ta(u_2,v_1) \\
&=(2a+2b-1)+2a+(\bar{\lambda}^2 + \lambda^2)(1-a) \\
&= (t^2-1) + (4-t^2)a\\
&\ge (t^2-1) + (4-t^2) = 3 
\end{align*}
Since $|\trace(M_1^{-1}M_2)|\le 3$, we have $\trace(M_1^{-1}M_2)=3$ and thus $M_1^{-1}M_2=I$. This argument also applies to $(u_1, v_2)$, $(u_2, v_1)$, and $(u_2, v_2)$. The last case left is when $(u_3, v_3)$ are negative eigenvectors. Then
\begin{align*}
&\ \ \trace(M_1M_2) + \trace(M_1^{-1}M_2) + \trace(M_1M_2^{-1}) + \trace(M_1^{-1}M_2^{-1}) & \\
 &= 4\ta(u_3,v_3) + 2t(\ta(u_1,v_3 + \ta(u_2,v_3) + \ta(u_3,v_1) + \ta(u_3,v_2)) & \\
 &\qquad +(\bar{\lambda}^2 + \lambda^2+2)(\ta(u_1,v_1) + \ta(u_2,v_2) + \ta(u_1,v_2) + \ta(u_2,v_1)) &\mathrm{by}\ (\ref{eq:trace_sum_of_ta}) \\
&=4\ta(u_3,v_3) + 4t(1-\ta(u_3,v_3)) + t^2(\ta(u_3,v_3)+1)  &\mathrm{by}\ (\ref{eq:expand}) \\
&= (t-2)^2 \ta(u_3,v_3) + t^2+4t
\end{align*}
Since $u_3$ and $v_3$ correspond to eigenvalue 1, $\ta(u_3,v_3)\in F$. Moreover, the computation above shows that $(t-2)^2 \ta(u_3,v_3)\in \OO_F \cap \RR = \OO_K $ and it is bounded in both real embeddings of $K=\QQ(\sqrt{D})$, so when $D$ is big, $(t-2)^2 \ta(u_3,v_3)\in \ZZ$. However, this means that $1\le \ta(u_3,v_3)=\ta^{\sigma_2}(u_3^{\sigma_2},v_3^{\sigma_2})\le 1$, so $\ta(u_3,v_3)=1$. 

In conclusion, in all the possible cases, both negative eigenvectors in fact represent the same point. 

Now the only case left is the case where $(\lambda_1, \lambda_2, \lambda_3)=(\tilde{\lambda}_1, \tilde{\lambda}_2, \tilde{\lambda}_3)=(-1,-1, 1)$ and $u_3,v_3 < 0$, but then
\begin{align*}
\trace(M_1M_2) &= \ta(u_3, v_3) + \sum_{j,k=1}^2 \ta(u_j, v_k) - \sum_{j=1}^2 (\ta(u_j, v_3)+\ta(u_3, v_j)) \\
&= \ta(u_3, v_3) + (\ta(u_3, v_3)+1) - 2(1-\ta(u_3, v_3)) \\
&= 4\ta(u_3, v_3)-1 \numberthis \label{eq:minus1trace} \\
&\ge 4-1=3
\end{align*}
so $|\trace(M_1M_2)|\le 3$ gives that $M_1M_2=I$. 

\vskip 1cm

Now we have proved the case where the respective stabilizers $M_1$ and $M_2$ for $u$ and $v$ have the same set of eigenvalues. For the general case, we first obtain the bound $D_0(R)$ of $D$ such that the distance between isolated elliptic points with conjugate stabilizers has a lower bound $2R$. Then for isolated elliptic points $u$ and $v$ with stabilizers $M_1$ and $M_2$, since $2R<d(v,M_1v)\le d(v,u) + d(u,M_1v)=2d(u,v)$, we have $d(u,v)>R$.

\end{proof}

The proposition above only applies to isolated elliptic points. For distinct elliptic elements which stabilize complex lines, it is possible that these two stacky lines intersect. This happens in one of the simplest examples: consider $J=diag(1,1,-\sqrt{D})$ and elliptic elements $M_1=diag(1,-1,-1)$ and $M_2=diag(-1,1,-1)$ representing flipping over the two coordinate axes, and then the loci stabilized by them intersect at the origin $[0,0,1]$. This example also shows that the isotropy groups are not necessarily cyclic in this context. However, we still have the following repulsion result in this case.

\begin{proposition}\label{prop:repulsion_lines}
Follow the notation in (\ref{notation}). For any given $R>0$, there exists $D_0(R)>0$ such that when $D>D_0(R)$, if $M_1$ and $M_2$ are elliptic elements with eigenvalues $(-1,-1,1)$ stabilizing distinct complex lines, then either their distance $d_{\BB^2}(L_1,L_2) > R$, or they intersect at an isolated elliptic point. 
\end{proposition}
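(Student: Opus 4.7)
The plan is to follow the template of Proposition~\ref{prop:repulsion}: identify a trace that encodes the relative position of $L_1$ and $L_2$, then use arithmetic constraints to force it into a short list of values and analyze case by case. Following Theorem~\ref{thm:classification_of_isometries}(iii)(b), I pick pairwise orthogonal eigenvectors $u_1,u_2,u_3\in F^3$ of $M_1$ with $u_1<0$, $u_2>0$ (both spanning the $(-1)$-eigenspace), and $u_3>0$ the $(+1)$-eigenvector; the latter is precisely a polar vector of $L_1$. An analogous basis $v_1,v_2,v_3$ is chosen for $M_2$. By the distance formulas of Section~\ref{section:classify_isometries}, $\ta(u_3,v_3)$ controls whether the two lines meet in $\BB^2$ and, when they do not, $\cosh^2(d_{\BB^2}(L_1,L_2)/2)=\ta(u_3,v_3)$. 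Substituting the eigenvalues into (\ref{eq:trace_sum_of_ta}) and using (\ref{eq:expand}) to collapse the partial sums $\sum_{j=1}^2\ta(u_j,v_3)$, $\sum_{k=1}^2\ta(u_3,v_k)$, and $\sum_{j,k=1}^2\ta(u_j,v_k)$ in terms of $\ta(u_3,v_3)$, I expect the clean identity
\[
\trace(M_1M_2)=4\ta(u_3,v_3)-1.
\]

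Since $u_3,v_3\in F^3$ with eigenvalue $1\in F$, the numerator $\langle u_3,v_3\rangle\langle v_3,u_3\rangle$ is a norm from $F$ to $K$ and the denominator is Galois-fixed, so $\ta(u_3,v_3)\in K$ and $\trace(M_1M_2)\in\OO_F\cap K=\OO_K$. Running the same trace expansion under the second pair of embeddings (where the Hermitian form is definite and every $\ta^{\sigma_2}\ge 0$), exactly as in Lemma~\ref{lem:trace_bound_3}, yields $|\trace(M_1M_2)^{\sigma_2}|\le 3$. The hypothesis $d_{\BB^2}(L_1,L_2)\le R$ bounds $\ta(u_3,v_3)$, and hence $|\trace(M_1M_2)^{\sigma_1}|$, in terms of $R$. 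For $D>D_0(R)$ large enough, any element of $\OO_K$ with both embeddings bounded must be a rational integer, so $\trace(M_1M_2)\in\ZZ$ with $|\trace(M_1M_2)|\le 3$. Combined with $\ta(u_3,v_3)\ge 0$ (both polar vectors are positive), this leaves $\trace(M_1M_2)\in\{-1,0,1,2,3\}$.

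The values $\trace=3$ and $\trace\in\{0,1,2\}$ are quick to dispose of. If $\trace=3$, Lemma~\ref{lem:eigenval_conj_inverse} gives characteristic polynomial $(\lambda-1)^3$, and since $\Gamma$ has no parabolic elements, $M_1M_2=I$, hence $M_2=M_1^{-1}=M_1$ and $L_1=L_2$, contradicting distinctness. If $\trace\in\{0,1,2\}$, the characteristic polynomial $\lambda^3-t\lambda^2+t\lambda-1$ factors with three distinct roots, so $M_1M_2$ is regular elliptic with a unique fixed point in $\BB^2$; this point is fixed by both $M_1$ and $M_2$, so it must be $p:=L_1\cap L_2$, which is therefore an isolated elliptic point.

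The hardest case is $\trace=-1$, where $M_1M_2$ has the same eigenvalue triple $(-1,-1,1)$ as $M_1,M_2$ and could a priori stabilize a third complex line through $p$. To resolve it, I observe $(M_1M_2)^2$ has characteristic polynomial $(\lambda-1)^3$, so the no-parabolics argument again forces $(M_1M_2)^2=I$, whence $M_1M_2=(M_1M_2)^{-1}=M_2M_1$ and the two elements commute. Simultaneously diagonalizing the commuting involutions in an orthogonal eigenbasis $e_1,e_2,e_3$ produces distinct sign-patterns with eigenvalues $(-1,-1,1)$: the two $(+1)$-positions are the polar vectors of $L_1$ and $L_2$ (both positive), while the remaining basis vector is the unique negative one and must lie in both $(-1)$-eigenspaces, hence equal $p$. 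Then $M_1M_2$ has $+1$-eigenvector $p$ and $(-1)$-eigenspace spanned by two positive vectors, so by Theorem~\ref{thm:classification_of_isometries}(iii)(a) it is a complex reflection about $p$, making $p$ its isolated fixed point. Thus in every surviving case the lines meet at an isolated elliptic point, completing the proof.
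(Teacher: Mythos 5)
Your proof is correct, but it takes a genuinely different route from the paper's. The paper splits immediately on whether $L_1$ and $L_2$ intersect. In the intersecting case it uses a pure classification argument needing no arithmetic at all: the intersection point $p$ is the unique negative eigenvector of $M_1M_2$ and carries eigenvalue $(-1)(-1)=1$, so by Theorem~\ref{thm:classification_of_isometries}(iii) $M_1M_2$ is either the identity (forcing $L_1=L_2$) or fixes $p$ as an isolated point. In the non-intersecting case the paper shows $8\ta(u_3,v_3)=2\trace(M_1M_2)+2\in\OO_K\setminus\ZZ$ (since $\ta>1\ge\ta^{\sigma_2}$ forces irrationality), and a bounded quadratic irrational integer must have large conjugate gap, giving $d>R$ for $D$ large. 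You instead do the arithmetic up front to pin $\trace(M_1M_2)$ to $\{-1,0,1,2,3\}$ and then treat each value. Your $\trace=3$ case in fact gives a cleaner justification than the paper's ``since the space is compact they cannot be asymptotic'': the no-parabolics argument directly shows $\ta(u_3,v_3)=1$ forces $M_1=M_2$. The one place you overwork is $\trace=-1$: once you know $\ta(u_3,v_3)=0$, the lines intersect at a negative vector $p$ on which $M_1M_2$ acts with eigenvalue $1$, the \emph{non-repeated} eigenvalue of the triple $(-1,-1,1)$, so Theorem~\ref{thm:classification_of_isometries}(iii)(a) already says $M_1M_2$ is a complex reflection about the point $p$ with no need for the detour through $(M_1M_2)^2=I$, commutativity, and simultaneous diagonalization. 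In fact any $\trace<3$ puts you in the paper's intersecting case, where its one-line classification argument disposes of $\{-1,0,1,2\}$ uniformly; your separate treatments of $\{0,1,2\}$ and $-1$, while correct, rediscover that argument in pieces.
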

\begin{proof}
Denote $L_1,L_2$ the complex lines stabilized by $M_1,M_2$ respectively. We can assume that $(\lambda_1, \lambda_2, \lambda_3)=(\tilde{\lambda}_1, \tilde{\lambda}_2, \tilde{\lambda}_3)=(-1,-1, 1)$ and $u_1,v_1 < 0$. 

If $L_1\cap L_2 \ne \emptyset$, then we can pick $u_1=v_1$ to be the intersection point. Note that $M_1M_2$ should stabilize $u_1$, and the corresponding eigenvalue is $(-1)^2=1$. Thus, $M_1M_2$ is either the identity which means $M_1=M_2^{-1}=M_2$, or an elliptic element stabilizing an isolated elliptic point.

If $L_1\cap L_2 = \emptyset$, then since the space is compact, they cannot be asymptotic and thus $\ta(u_3,v_3) > 1$. However, $\ta(u_3,v_3)^{\sigma_2} \le 1$, so $\ta(u_3,v_3) \not\in\QQ$. From (\ref{eq:minus1trace}) and Lemma~\ref{lem:trace_bound_3} we see that $8\ta(u_3,v_3) = 2\trace(M_1M_2) +2  \in \OO_F \cap \RR = \OO_K $. Thus, $8\ta(u_3,v_3)\in \OO_K \backslash \ZZ$. Then if we pick $D_0$ with $\sqrt{D_0}\ge 8\cosh^2(R/2)$, then 
\[8\cosh^2\left(\frac{d(L_1,L_2)}{2}\right)=8\ta(u_3,v_3) > 8\ta(u_3,v_3)-8\ta(u_3,v_3)^{\sigma_2} \ge \sqrt{D} > 8\cosh^2\left(\frac{R}{2}\right). \]

\end{proof}

As a corollary, the proofs of Proposition \ref{prop:repulsion} and \ref{prop:repulsion_lines} above give strong restrictions on possible stabilizer groups:

\begin{corollary}\label{cor:classify_torsion_groups}
	Follow the notation in (\ref{notation}). Then for any given $R>0$, there exists $D_0(R)>0$ such that when $D>D_0(R)$, the order of possible stabilizer groups of an isolated elliptic point is no more than $48$. 
\end{corollary}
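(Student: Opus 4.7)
The plan is to realize the stabilizer $G_p$ of an isolated elliptic point $p=[u]\in\BB^2$ as a finite subgroup of $U(2)$ and to bound its order via Lemma~\ref{lem:ellclass} together with the classification of finite subgroups of $SO(3)$.

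First, any element of $SU(h^{\sigma_1})$ fixing $p$ preserves the orthogonal decomposition $\CC u \oplus u^\perp$, acting on $u^\perp$ by some $A$ in the unitary group of $h|_{u^\perp}$ (which is isomorphic to $U(2)$ since $h|_{u^\perp}$ is positive definite) and on $\CC u$ by the scalar $(\det A)^{-1}$ dictated by $\det M=1$. The map $M\mapsto A$ is therefore a faithful embedding $G_p\hookrightarrow U(2)$.

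Next, since $D>D_0$ enforces $D>21$ and $D'>D'_0$ rules out $F=K(\sqrt{\alpha})$ for $\alpha\in\{-1,-2,-3,-7\}$, Lemma~\ref{lem:ellclass} restricts the eigenvalue triple of every non-identity $M\in G_p$ to one of
\[
(1,-1,-1),\quad (1,\omega,\omega^2),\quad (1,i,-i),\quad (1,-\omega,-\omega^2).
\]
In particular every element of $G_p$ has order in $\{1,2,3,4,6\}$.

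Then I would pin down the center: a scalar $\zeta I_2\in G_p$ corresponds under the above embedding to $\mathrm{diag}(\zeta,\zeta,\zeta^{-2})\in\Gamma$, whose unordered eigenvalue triple must belong to the list. A brief case check — $\zeta^3=1$ gives the forbidden $(\zeta,\zeta,\zeta)$, $\zeta=\pm i$ gives $\{\pm i,\pm i,-1\}$ (not in the list), and scalars of order $6$ or $12$ introduce entries outside $\{\pm 1,\pm\omega,\pm\omega^2,\pm i\}$ — forces $\zeta\in\{\pm 1\}$. Hence $Z(G_p)\subseteq\{\pm I\}$ and $|Z(G_p)|\le 2$.

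Finally, the quotient $G_p/Z(G_p)$ embeds in $PU(2)\cong SO(3)$ as a finite subgroup whose elements still have orders in $\{1,2,3,4,6\}$. Among the classical finite subgroups of $SO(3)$ — $C_n$, $D_n$, $A_4$, $S_4$, $A_5$ — the order-$5$ rotations of $A_5$ are forbidden, while the remaining options have order at most $|S_4|=24$. Combining, $|G_p|\le 2\cdot 24=48$.

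The main obstacle is the center computation: ruling out scalar $\zeta I_2$ for $\zeta\ne\pm 1$ rests entirely on the rigid list of Lemma~\ref{lem:ellclass}, and this is where the ``strong restrictions'' from the repulsion proofs enter the argument. Everything else is a direct bookkeeping use of the $SO(3)$ classification.
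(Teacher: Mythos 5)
Your proof is correct, and it takes a genuinely different route from the paper's. The paper isolates the subgroup $G_1:=\{M\in G_p : M \text{ acts by } 1 \text{ on } u\}$, observes that $\det M=1$ then forces $M|_{u^\perp}\in SU(2)$ so that $G_1\hookrightarrow SU(2)$, bounds $|G_1|\le 24$ by the classification of finite subgroups of $SU(2)$ with element orders in $\{1,2,3,4,6\}$, and separately shows $[G_p:G_1]\le 2$ using the case analysis (the facts $(\ast)$) extracted from the proof of Proposition~\ref{prop:repulsion}. You instead embed all of $G_p$ in $U(2)$ at once, push down to $PU(2)\cong SO(3)$, and bound the scalar kernel by $2$ and the image by $|S_4|=24$. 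Your route is more streamlined: it bypasses the $(\ast)$ restrictions from the repulsion argument and needs only the eigenvalue list of Lemma~\ref{lem:ellclass}. What the paper's route buys in exchange is finer structural information (e.g.\ the explicit dichotomy between cyclic stabilizers and $SU(2)$-type stabilizers with a possible index-$2$ extension) that the bare bound does not require.

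One terminological slip should be corrected. The subgroup you compute is not the group-theoretic center $Z(G_p)$ but the subgroup of \emph{scalar} elements $G_p\cap Z(U(2))$, which is exactly the kernel of the composite $G_p\hookrightarrow U(2)\to PU(2)$. In general $Z(G_p)$ can be much larger than $G_p\cap Z(U(2))$ (for instance if $G_p$ is cyclic of order $3$ then $Z(G_p)=G_p$, certainly not contained in $\{\pm I\}$), and $G_p/Z(G_p)$ need not embed in $PU(2)$ at all. The correct statement is that $G_p/(G_p\cap Z(U(2)))$ embeds in $SO(3)$; your case check correctly shows $|G_p\cap Z(U(2))|\le 2$, and with that substitution the conclusion $|G_p|\le 2\cdot 24=48$ goes through.
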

\begin{proof}
	We first summarize what we got in the proof of Proposition~\ref{prop:repulsion} in this situation: for $M_1,M_2\in\Gamma$ stabilizing $u\in\BB^2$ with the same triple of eigenvalues $(\lambda, \bar{\lambda}, 1)$ where $\lambda \in\{-1,\omega,i,-\omega\} $:
	\begin{enumerate}
	 	\item When $\lambda\ne -1$, the eigenvalues of $M_1,M_2$ corresponding to $u$ cannot be respectively 1 and $\lambda$ (or $\bar{\lambda}$). \hfill $(\ast)$
	 	\item If eigenvalues of $M_1,M_2$ corresponding to $u$ are the same non-real number (resp. complex conjugate non-real numbers), then $M_1=M_2$ (resp. $M_1=M_2^{-1}$).
	 	\item When both eigenvalues of $M_1,M_2$ corresponding to $u$ are 1, nothing was proved. 
	\end{enumerate}
	Now we generalize these to the case where $M_1,M_2\in\Gamma$ stabilizing $u\in\BB^2$ with probably different triples of eigenvalues. Let the eigenvalues of $M_1,M_2$ corresponding to $u$ be $\lambda_1,\lambda_2$ respectively.
	\begin{enumerate}
		\item $\lambda_1,\lambda_2$ cannot be respectively 1 and $\lambda$ for $\lambda\ne \pm 1$: otherwise, note that $M_1M_2$ and $M_2$ are elliptic elements whose eigenvalue corresponding to $u$ is $\lambda$, so by $(\ast)$, $M_1M_2=M_2$ and thus $M_1=I$. 
		\item When one of $\lambda_1,\lambda_2$ is not real, $M_1,M_2$ should be in the same cyclic group. Let $\lambda_j$ be a primitive $n_j$-th root of unity for $j=1,2$. We check each case as follows. If $(n_1,n_2)=(3,4)$ or $(4,6)$, then $M_1M_2$ is an elliptic element of order at least 12, which contradicts Lemma~\ref{lem:ellclass}. If $(n_1,n_2)=(2,4)$, then $M_1M_2$ and $M_2^{-1}$ are elliptic elements with the same eigenvalue corresponding to $u$, so by $(\ast)$, $M_1M_2=M_2^{-1}$, and $M_1$ is in the cyclic group generated by $M_2$. Similar argument also works when $(n_1,n_2)=(2,6),(3,6),(2,3)$.
		\item When $\lambda_1,\lambda_2\in\{\pm 1\}$, $M_1,M_2$ may not be in the same cyclic group. However, consider $u^{\perp}:=\{v\in V: \langle u,v\rangle=0 \}$. The restriction of the hermitian form on $u^{\perp}$ is positive definite, so the subgroup $G_1$ consisting of elliptic elements whose eigenvalues corresponding to $u$ are 1 now descends to lie in a copy of $SU(2)$ given by $(u^{\perp}, \langle\cdot,\cdot\rangle)$. By the classification of finite subgroup of $SU(2)$ and our constraints on orders of elements, $G_1$ must be among cyclic groups $\ZZ/n\ZZ$ for $n=2,3,4,6$, dihedral groups $D_n$ of order $2n$ for $n=2,3,4,6$, tetrahedral group $A_4$, and octahedral group $S_4$. For the full stabilizer group $G$ of $u$, if $G\backslash G_1$ is nonempty, it should consist of elliptic elements whose eigenvalues corresponding to $u$ are -1, i.e. they stabilize complex lines passing through $u$. In this case, for any $M\in G\backslash G_1$, $M^2=I$ and $M^{-1}G_1M$ consists of elliptic elements whose eigenvalues corresponding to $u$ are 1, so $M^{-1}G_1M=G_1$. Therefore, the index of $G_1$ in $G$ is at most 2.
	\end{enumerate}
	In conclusion, if $u\in\BB^2$ has a nontrivial stabilizer group, then it is either a cyclic group of order $n$ whose eigenvalues corresponding to $u$ are those $n$-th roots of unity for $n\in\{2,3,4,6\}$, or it is isomorphic to a finite subgroup of $SU(2)$ of order at most 24 (or its extension of index 2). 
\end{proof}

For a specific choice of the CM extension $F=K(\alpha)$, this bound may be improved by further analysis on possible finite subgroups of $SU(2)$. See \cite{beauville2009finitesubgroupspgl2k} for detail. 

For stabilizer groups of an elliptic complex line, this classification is trivial in our case: if two nontrivial elliptic elements stabilize the same complex line, then they have the same 2-dimensional $(-1)$-eigenspace and thus the same orthogonal $1$-eigenspace, so they have to be the same.

\subsection{Injectivity Radius}\label{subsec:inj_radius}
Recall that the injectivity radius $\rho_X(x)$ of a point $x\in X$ is defined as half of the length of the shortest closed geodesic through $x$. In our setting, we do not have a uniform lower bound of injectivity radii since the closed geodesics of points close to an elliptic point can be arbitrarily small. However, if we only consider loxodromic elements, we do have the following lower bound of injectivity radii. 

\begin{proposition}\label{prop:inj_radius_lox}
Follow the notation in (\ref{notation}). Let the injectivity radius of a point $x\in X$ given by loxodromic elements be
\[
	\rho_X^{Lox}(x) := \min_{\mathrm{M\ loxodromic}} d(x,Mx)/2. 
\]
Then there exists $D_0>0$ such that when $D>D_0$,  $\rho_X^{Lox}(x)\ge\displaystyle \cosh^{-1}\left(\frac{D^{1/4}-1}{2} \right) $.

\end{proposition}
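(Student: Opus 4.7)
The plan is to reduce the lower bound on $\rho_X^{Lox}(x)$ to a lower bound on the translation length of every loxodromic $M\in\Gamma$, then extract that from an arithmetic analysis of $\trace(M)$. Let $M\in\Gamma$ be loxodromic with eigenvalues $re^{i\theta},\,r^{-1}e^{i\theta},\,e^{-2i\theta}$ ($r>1$) under $\sigma_1$, as in Theorem~\ref{thm:classification_of_isometries}. The restriction of $M$ to the complex line spanned by the two null eigenvectors is a hyperbolic loxodromic of multiplier $r^2$, so its translation length is $\ell(M)=2\log r$, and the standard fact $d_{\BB^2}(x,Mx)\ge\ell(M)$ gives $\cosh(d(x,Mx)/2)\ge(r+r^{-1})/2$. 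On the other hand, the triangle inequality applied to $\trace(M)=(r+r^{-1})e^{i\theta}+e^{-2i\theta}$ yields $|\trace(M)|\le r+r^{-1}+1$. Hence it suffices to prove $|\trace(M)|\ge D^{1/4}$.

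Put $t=\trace(M)\in\OO_F$ and $N=N_{F/K}(t)=t\bar t\in\OO_K$. Since $h^{\sigma_2}$ is definite, $M^{\sigma_2}$ lies in a compact copy of $SU(3)$ and hence has unimodular eigenvalues, so $|t^{\sigma_2}|\le 3$ and $N^{\sigma_2}\le 9$. The key step is to show $|t|^2>9$ once $D$ is large. Suppose instead $|t|\le 3$; then $|t^{\sigma_j}|\le 3$ for $j=1,2$, so by the $D'_0$-hypothesis in~(\ref{notation}), $t\in\OO_K$ is totally real. The characteristic polynomial then factors as $(\lambda-1)(\lambda^2-(t-1)\lambda+1)$, and its quadratic factor has reciprocal roots. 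Loxodromicity under $\sigma_1$ forces $(t^{\sigma_1}-1)^2>4$, which together with $|t^{\sigma_1}|\le 3$ yields $t^{\sigma_1}\in[-3,-1)$. Under $\sigma_2$ these roots lie on the unit circle, forcing $(t^{\sigma_2}-1)^2\le 4$, i.e., $t^{\sigma_2}\in[-1,3]$. Since these intervals are disjoint, $t^{\sigma_1}\ne t^{\sigma_2}$, so $t\in\OO_K\setminus\ZZ$; the standard $\sqrt{D}$-separation of the two real embeddings then gives $\sqrt{D}\le|t^{\sigma_1}-t^{\sigma_2}|\le 6$, a contradiction as soon as $D_0>36$.

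Thus $|t|^2>9\ge N^{\sigma_2}$, so $N\notin\ZZ$. Applying the same $\sqrt{D}$-gap to $N\in\OO_K\setminus\ZZ$ yields
\[|t|^2=N^{\sigma_1}\ge N^{\sigma_2}+\sqrt{D}\ge\sqrt{D},\]
since the opposite case $N^{\sigma_1}<N^{\sigma_2}$ would force $N^{\sigma_1}\le 9-\sqrt{D}<0$ once $D>81$. Hence $|t|\ge D^{1/4}$, and combining with the reduction above gives $\rho_X^{Lox}(x)\ge\log r\ge\cosh^{-1}((D^{1/4}-1)/2)$ for any $D_0\ge 81$.

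The main obstacle lies in the second paragraph: ruling out loxodromic elements with $|\trace(M)|\le 3$, so that $N_{F/K}(\trace(M))$ fails to be rational and the $\sqrt{D}$-gap becomes available. This uses both the $D'_0$-hypothesis (to force $t$ to be totally real when small) and the factorization of the characteristic polynomial in the totally real case, so that one can pin down the two embeddings $t^{\sigma_1},t^{\sigma_2}$ to disjoint intervals and extract the necessary contradiction.
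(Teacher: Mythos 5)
Your proof is correct, and it is a genuinely different route from the paper's in two places. The paper bounds $\cosh(d(x,Mx)/2)=\left|\frac{\langle Mu,u\rangle}{\langle u,u\rangle}\right|$ directly, by decomposing $u$ against the eigenbasis of $M$ via the identity $1=w+\bar w+\ta(u,v_3)$ from Corollary~\ref{cor:classification_of_isometries}, and then using $\ta(u,v_3)\le 0$ (so $\Re(w)\ge 1/2$) to obtain $\left|\frac{\langle Mu,u\rangle}{\langle u,u\rangle}\right|\ge\Re(w)(|\lambda|+|\lambda|^{-1}-2)+1\ge(|\trace(M)|-1)/2$. You instead invoke the standard CAT(0) fact that $d(x,Mx)\ge\ell(M)=2\log r$, i.e.\ the displacement of an axial isometry is minimized on the axis, plus the easy triangle-inequality bound $|\trace(M)|\le r+r^{-1}+1$. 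Both give the same inequality $\cosh(d(x,Mx)/2)\ge(|\trace(M)|-1)/2$; the paper's argument is essentially a self-contained proof of this instance of the standard fact, while yours is shorter but relies on a fact you state without proof or citation (it would be worth citing, e.g., Bridson--Haefliger II.6.8, or at least checking that the Bergman normalization in the paper makes complex lines isometric to curvature $-1$ planes so that $\ell(M)=2\log r$).

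The second, more significant divergence is in the arithmetic step. The paper asserts that if $|\trace(M)|^2$ is a rational integer $\le 9$ then $M$ is elliptic, ``by the proof of Proposition~\ref{prop:repulsion}.'' But what that proof actually supplies (via the $D'_0$ hypothesis) is only that such a trace $t$ must lie in $\OO_K$; it does not by itself rule out a real loxodromic trace. You fill exactly this gap: for real $t$ the characteristic polynomial factors as $(\lambda-1)(\lambda^2-(t-1)\lambda+1)$, loxodromicity at $\sigma_1$ forces $t^{\sigma_1}\in[-3,-1)$, compactness at $\sigma_2$ forces $t^{\sigma_2}\in[-1,3]$, and the disjointness of these intervals together with the $\sqrt D$-separation of the two real embeddings of $\OO_K\setminus\ZZ$ yields a contradiction for $D>36$. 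This is more careful than the paper's terse reference and is the right way to justify $|\trace(M)|^2\in\OO_K\setminus\ZZ$; the remaining $\sqrt D$-gap step to reach $|\trace(M)|\ge D^{1/4}$ matches the paper's. One cosmetic point: your final line ``$\rho_X^{Lox}(x)\ge\log r\ge\cosh^{-1}((D^{1/4}-1)/2)$'' should be read with $r$ varying over all loxodromic $M$, i.e., the bound holds for each $\log r_M$ separately and hence for the infimum.
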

\begin{proof}
For a loxodromic element $M$, by Theorem~\ref{thm:classification_of_isometries} we write its eigenvalues as $\lambda$, $\bar{\lambda}^{-1}$ and $\bar{\lambda}/\lambda$ with $|\lambda| >1$, and if their respective eigenvectors are $v_1,v_2,v_3$, then $1 = w+\bar{w}+\ta(u,v_3)$ for any $u\in V\backslash V_0$ and $w:= \frac{\langle u, v_2\rangle\langle v_1, u\rangle}{\langle v_1, v_2\rangle\langle u, u\rangle}$. Suppose that $x\in X$ is represented by $u\in V_-$. Since $u\in V_-$ and $v_3\in V_+$, $\ta(u,v_3) \le 0$, and thus $\Re(w) \ge 1/2$. 

Before we compute the injectivity radius, first note that when the discriminant $D$ grows, $|\lambda|$ will become large as follows: $\trace(M)=\lambda+\bar{\lambda}^{-1}+\bar{\lambda}/\lambda$ satisfies $\trace(M) \in \OO_F$ and $|\trace(M)^{\sigma_2}| \le 3$ since $M^{\sigma_2}\in U(3)$ only has eigenvalues of norm 1. Thus, $|\trace(M)|^2$ is either a rational integer bounded by 9, or an integer in $\OO_K \backslash \ZZ$, but by the proof of Proposition~\ref{prop:repulsion}, the first case implies that $M$ is elliptic, so $|\trace(M)|^2 \in \OO_K \backslash \ZZ$. This gives that $|\trace(M)|^2 = (|\trace(M)|^2 - |\trace(M)^{\sigma_2}|^2) + |\trace(M)^{\sigma_2}|^2 \ge \sqrt{D} $. 

Now we compute the injectivity radius. Note that $\ta(u, Mu)=\frac{\langle Mu, u\rangle\langle u, Mu\rangle}{\langle Mu, Mu\rangle\langle u, u\rangle}=\left|\frac{\langle Mu, u\rangle}{\langle u, u\rangle} \right|^2$. Then since $Mu= \lambda\frac{\langle u, v_2\rangle}{\langle v_1, v_2\rangle}v_1 + \bar{\lambda}^{-1}\frac{\langle u, v_1\rangle}{\langle v_2, v_1\rangle}v_2 + \frac{\bar{\lambda}}{\lambda}\frac{\langle u, v_3\rangle}{\langle v_3, v_3\rangle}v_3$, 
\begin{align*}
\left|\frac{\langle Mu, u\rangle}{\langle u, u\rangle} \right| =& \left| \lambda w + \bar{\lambda}^{-1} \bar{w} + \bar{\lambda}/\lambda (1-w-\bar{w}) \right| \\
\ge & \left| |\lambda w + \bar{\lambda}^{-1} \bar{w}| - |\bar{\lambda}/\lambda (1-w-\bar{w})| \right| =  |\lambda w + \bar{\lambda}^{-1} \bar{w}| - (w+\bar{w}-1) \\
\ge & \Re(|\lambda| w + |\bar{\lambda}^{-1}| \bar{w})- (w+\bar{w}-1) = \Re(w)(|\lambda|+|\lambda|^{-1}-2) +1 \\
\ge & (|\trace(M)|-3)/2 +1 \\
\ge & \frac{D^{1/4}-1}{2} .
\end{align*}

\end{proof}

If the closed geodesics of a point given by an elliptic element fixing one single point is small, then this point should be close to this isolated elliptic point. This geometric intuition can be made precise as follows.

\begin{proposition}\label{prop:inj_radius_ell}
	For any given $R>0$, if $d_{\BB^2}(u,Mu)<R$ for some $u\in\BB^2$ and some elliptic element $M$ of order 2, 3, 4, or 6, with isolated fixed point $v_1\in\BB^2$, then $d_{\BB^2}(u,v_1)< R$.
\end{proposition}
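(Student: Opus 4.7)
The plan is to prove the sharper statement $d_{\BB^2}(u,v_1)\le d_{\BB^2}(u,Mu)$, from which the proposition follows immediately. Via the identity $\cosh^2(d/2)=\ta$ and the monotonicity of $\cosh$, this reduces to proving the inequality $\ta(u,v_1)\le \ta(u,Mu)$.

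To set up coordinates, I invoke Corollary~\ref{cor:classification_of_isometries}(ii) to obtain pairwise orthogonal eigenvectors $v_1<0$, $v_2,v_3>0$ of $M$ with unit complex eigenvalues $\lambda_1,\lambda_2,\lambda_3$, where $v_1$ is the prescribed isolated fixed point. I then replace $M$ by $\lambda_1^{-1}M$---which acts identically on $\PP V$ and therefore leaves $\ta(u,Mu)$ unchanged---so that the eigenvalue at $v_1$ becomes $1$ and the other two eigenvalues become $\mu_j:=\lambda_j/\lambda_1$ for $j=2,3$. The isolated-fixed-point hypothesis forces $\mu_2,\mu_3\ne 1$: otherwise the $1$-eigenspace would contain $v_1<0$ and a positive vector, have signature $(1,1)$, and produce a fixed complex line through $v_1$. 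Since $M$ has order $n\in\{2,3,4,6\}$, the $\mu_j$ are $n$-th roots of unity distinct from $1$, and a case check of the finitely many possibilities gives the key bound $\Re(\mu_j)\le 1/2$, with equality only at $\zeta_6^{\pm 1}$.

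Next, I expand in the eigenbasis via \eqref{eq:lox_expand}. Writing $a_j:=\ta(u,v_j)$, the sign conventions give $a_1\ge 1$, $a_2,a_3\le 0$, and $a_1+a_2+a_3=1$ by \eqref{eq:lox_sum_one}. A direct computation yields
\[
  \frac{\langle Mu,u\rangle}{\langle u,u\rangle}=a_1+\mu_2 a_2+\mu_3 a_3.
\]
Introducing $b_j:=-a_j\ge 0$ so that $a_1=1+b_2+b_3$, the right-hand side becomes $1+(1-\mu_2)b_2+(1-\mu_3)b_3$. Since the rescaled $M$ still preserves $\langle\cdot,\cdot\rangle$, $\ta(u,Mu)$ equals the squared modulus of this expression, and expanding gives
\[
  \ta(u,Mu)-\ta(u,v_1)=\bigl(1-2\Re(\mu_2)\bigr)b_2+\bigl(1-2\Re(\mu_3)\bigr)b_3+\bigl|(1-\mu_2)b_2+(1-\mu_3)b_3\bigr|^2.
\]
Each summand is non-negative: the two linear terms by the bound $\Re(\mu_j)\le 1/2$ combined with $b_j\ge 0$, and the last term as a modulus squared. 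This proves the desired inequality.

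The main obstacle is the normalization step: to exploit the root-of-unity structure it is essential to rescale $M$ out of $\SU$, which is justified only because both the projective action and $\ta(u,Mu)$ are invariant under scalar multiplication of $M$. Once that is done, the eigenvalue restriction coming from the orders $\{2,3,4,6\}$ is exactly what is needed to decompose $\ta(u,Mu)-\ta(u,v_1)$ into a non-negative linear and a non-negative quadratic piece.
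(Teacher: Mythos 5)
Your proof is correct, and it proceeds by essentially the same mechanism as the paper's: expand $\langle Mu,u\rangle/\langle u,u\rangle$ in the eigenbasis as $\sum_j \lambda_j\ta(u,v_j)$, normalize so the eigenvalue at the fixed point is $1$, and exploit the fact that the remaining two (necessarily non-trivial) $n$-th roots of unity for $n\in\{2,3,4,6\}$ all have real part at most $1/2$. Your observation that the isolated-fixed-point hypothesis rules out $\mu_j=1$ matches the logic implicit in the paper's setup.

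The difference is in the last estimate. You square out $\bigl|1+(1-\mu_2)b_2+(1-\mu_3)b_3\bigr|^2$ exactly and decompose $\ta(u,Mu)-\ta(u,v_1)$ into a manifestly non-negative linear part plus a square, giving the clean statement $d(u,v_1)\le d(u,Mu)$. The paper instead lower-bounds the modulus by its real part to get the linear inequality $\cosh(d(u,Mu)/2)=\sqrt{\ta(u,Mu)}\ge(1-k)\ta(u,v_1)+k$ with $k=\max_j\Re(\mu_j)$. These are not the same inequality: for the worst case $k=1/2$, the paper's gives $\cosh^2(d(u,v_1)/2)\le 2\cosh(d(u,Mu)/2)-1$, which is strictly sharper than your $\cosh^2(d(u,v_1)/2)\le\cosh^2(d(u,Mu)/2)$ (the gap is exactly $(\cosh(d(u,Mu)/2)-1)^2\ge 0$). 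So your argument proves a slightly weaker intermediate bound but still comfortably delivers $d(u,v_1)<R$; the paper's linear bound buys a marginally better constant (and, for $n=2$, even recovers the midpoint relation $d(u,v_1)\le d(u,Mu)/2$), at the cost of a less symmetric final statement. Both are valid proofs of the proposition.
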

\begin{proof}
	Let $v_1,v_2,v_3$ be pairwise orthogonal eigenvectors of $M$ and $\lambda_1,\lambda_2,\lambda_3$ be their corresponding eigenvalues. Apply (\ref{eq:elliptic_expand}) to $u$ and we have
	\[
		Mu= \lambda_1\frac{\langle u, v_1\rangle}{\langle v_1, v_1\rangle}v_1 + \lambda_2\frac{\langle u, v_2\rangle}{\langle v_2, v_2\rangle}v_2 + \lambda_3\frac{\langle u, v_3\rangle}{\langle v_3, v_3\rangle}v_3 
	\]
	and then
	\begin{align*}
		\left|\frac{\langle Mu,u\rangle}{\langle u,u\rangle}\right|&=\left| \lambda_1\ta(u,v_1)+\lambda_2\ta(u,v_2)+\lambda_3\ta(u,v_3) \right| \\
		&\ge |\Re(\ta(u,v_1) + \lambda_2/\lambda_1 \ta(u,v_2) + \lambda_3/\lambda_1 \ta(u,v_3)) | 
	\end{align*}
	Since $\sum_{j=1}^3 \ta(u,v_j)=1$, $\ta(u,v_2),\ta(u,v_3)\le 0$ and $|\lambda_2/\lambda_1|=|\lambda_3/\lambda_1|=1$, we have $\Re(\ta(u,v_1) + \lambda_2/\lambda_1 \ta(u,v_2) + \lambda_3/\lambda_1 \ta(u,v_3)) \ge \ta(u,v_1) +\ta(u,v_2)+\ta(u,v_3) = 1$. Thus,
	\begin{align*}
		\left|\frac{\langle Mu,u\rangle}{\langle u,u\rangle}\right|&\ge \Re(\ta(u,v_1) + \lambda_2/\lambda_1 \ta(u,v_2) + \lambda_3/\lambda_1 \ta(u,v_3)) \\
		&= \ta(u,v_1) + \Re(\lambda_2/\lambda_1) \ta(u,v_2) + \Re(\lambda_3/\lambda_1) \ta(u,v_3) \\
		&\ge (1-k) \ta(u,v_1)+k
	\end{align*}
	where $k$ is the largest real part of nontrivial $n$-th roots of unity, i.e. $k=-1,-1/2,0,1/2$ for $n=2,3,4,6$ respectively. Therefore, 
	\[
		\cosh(d_{\BB^2}(u,v_1))=2\ta(u,v_1)-1\le 2\frac{\left|\frac{\langle Mu,u\rangle}{\langle u,u\rangle}\right|-k}{1-k}-1 < 4\cosh(R/2)-3 \le \cosh(R).
	\]
\end{proof}

Similarly, we have the following for elliptic complex lines. 

\begin{proposition}\label{prop:inj_radius_ell_line}
	For any given $R>0$, if $d_{\BB^2}(u,Mu)<R$ for some $u\in\BB^2$ and some elliptic element $M$ stabilizing a complex line $L$ with normal vector $n$, then $d_{\BB^2}(u,L)< R/2$.
\end{proposition}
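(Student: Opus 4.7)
The plan is to mirror the strategy of Proposition~\ref{prop:inj_radius_ell}: expand $u$ in an eigenbasis of $M$, compute $\ta(u,Mu)$ in terms of $\ta(u,n)$, and then invoke Proposition~\ref{prop:dist_point_line} to convert back to $d_{\BB^2}(u,L)$. Since $M$ stabilizes the complex line $L$, by Theorem~\ref{thm:classification_of_isometries} $M$ is a complex reflection about $L$: it has a two-dimensional eigenspace coinciding with $L$ (eigenvalue $\lambda_1$) and a one-dimensional eigenspace spanned by the polar vector $n$ (eigenvalue $\lambda_3 \ne \lambda_1$). In the arithmetic context of (\ref{notation}), Lemma~\ref{lem:ellclass} restricts the only eigenvalue triple with a repeated eigenvalue to $(1,-1,-1)$, so after relabelling $\lambda_1 = -1$ and $\lambda_3 = 1$.

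Concretely, I would first decompose $u = u_L + c\,n$, where $u_L \in L$ lies in the $(-1)$-eigenspace and $c = \langle u,n\rangle/\langle n,n\rangle$. Applying $M$ yields $Mu = -u_L + c\,n = -u + 2c\,n$, so
\[
\frac{\langle Mu,u\rangle}{\langle u,u\rangle} \;=\; -1 + 2\,\ta(u,n) \;=\; 2t-1, \qquad t := \ta(u,n) \le 0,
\]
where $t\le 0$ is forced by $u\in V_-$ and $n\in V_+$. Using $\langle Mu,Mu\rangle = \langle u,u\rangle$, this gives
\[
\cosh^2\!\left(\frac{d_{\BB^2}(u,Mu)}{2}\right) \;=\; \ta(u,Mu) \;=\; \left|\frac{\langle Mu,u\rangle}{\langle u,u\rangle}\right|^2 \;=\; (1-2t)^2,
\]
so $\cosh(d_{\BB^2}(u,Mu)/2) = 1-2t$ (positive since $t\le 0$). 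Independently, Proposition~\ref{prop:dist_point_line} yields $\cosh^2(d_{\BB^2}(u,L)/2) = 1-t$, hence $\cosh(d_{\BB^2}(u,L)) = 2(1-t)-1 = 1-2t$.

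Equating the two produces the sharp identity $d_{\BB^2}(u,L) = d_{\BB^2}(u,Mu)/2$, which immediately implies $d_{\BB^2}(u,L) < R/2$. The main conceptual point — and the obstacle one would encounter in trying to weaken the hypotheses — is that for a general complex reflection of rotation angle $\theta = \arg(\lambda_3/\lambda_1)$ the analogous computation gives $\sinh(d_{\BB^2}(u,Mu)/2) = |\sin(\theta/2)|\,\sinh(d_{\BB^2}(u,L))$, and the clean halving identity only materialises when $\theta = \pi$. Thus the result is genuinely pinned to the case $(\lambda_1,\lambda_3) = (-1,1)$ supplied by Lemma~\ref{lem:ellclass}.
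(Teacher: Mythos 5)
Your proof is correct and follows essentially the same route as the paper: expand $u$ in an eigenbasis of $M$, compute $\langle Mu,u\rangle/\langle u,u\rangle = 2\ta(u,n)-1$, and translate via Proposition~\ref{prop:dist_point_line}. You sharpen the statement to the exact identity $d_{\BB^2}(u,L) = d_{\BB^2}(u,Mu)/2$, which is a nice refinement but the same argument; your closing remark correctly identifies that the halving identity degenerates to $\sinh(d(u,Mu)/2) = |\sin(\theta/2)|\sinh(d(u,L))$ for a general complex reflection, and that the arithmetic hypothesis via Lemma~\ref{lem:ellclass} is what pins down $\theta=\pi$.
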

\begin{proof}
	Let $v_1,v_2,n$ be pairwise orthogonal eigenvectors of $M$ and $-1,-1,1$ be their corresponding eigenvalues. Then
	\[
		\left|\frac{\langle Mu,u\rangle}{\langle u,u\rangle}\right|=\left| -\ta(u,v_1)-\ta(u,v_2)+\ta(u,n) \right|=1-2\ta(u,n)<\cosh(R/2).
	\]
	Thus, 
	\[
		\cosh^2\left(\frac{d_{\BB^2}(u,L)}{2}\right) = 1-\ta(u,n)<\frac{\cosh(R/2)+1}{2}=\cosh^2\left(\frac{R}{4}\right).
	\]
\end{proof}

\section{Multiplicity Bounds}\label{sec:multiplicity_bounds}

In this section, we will conclude two kinds of multiplicity bounds for any curve $C\subset X=\Gamma\backslash \BB^2$ inside our compact quotient of the hyperbolic 2-ball. The first one is the multiplicity bound of $C$ at a point $x\in C\subset X$ in terms of the volume of $C$ in a geodesic ball centered at $x$. This is a corollary of a theorem by Hwang and To in \cite{hwang2002volumes}:

\begin{theorem}\label{thm:point_mult_bound}
Follow the notation in (\ref{notation}). Then for any complex analytic curve $C\subset X$ and an isolated elliptic point $x\in C$ with stabilizer group $G_x$, let $B_{x,r}$ be the geodesic ball centered at $x$ with radius $r$. Then there exists $D_0(r)>0$ such that when $D>D_0(r)$, 
	\[
		\vol(C\cap B_{x,r}) \ge \frac{4\pi}{|G_x|} \sinh^2\left(\frac{r}{2}\right)\mult_{x}(C).
	\]
\end{theorem}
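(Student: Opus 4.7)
The plan is to reduce to the classical Hwang-To volume estimate~\cite{hwang2002volumes} on the genuine ball $\BB^2$ via a local orbifold uniformization of $B_{x,r}$. Concretely, I would show that for $D$ large enough in terms of $r$, there is an isometry $B_{x,r}\cong B_{\tilde{x},r}/G_x$ where $\tilde{x}\in\BB^2$ is a chosen preimage of $x$; once this is in hand, Hwang-To applied to the preimage curve and division by $|G_x|$ deliver the asserted bound.

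To produce the uniformization, I would rule out every $\gamma\in\Gamma\setminus G_x$ capable of identifying two points of $B_{\tilde{x},r}$ once $D$ is sufficiently large. Loxodromic $\gamma$ are expelled by Proposition~\ref{prop:inj_radius_lox}, whose displacement lower bound $2\cosh^{-1}((D^{1/4}-1)/2)$ exceeds $2r$ for $D$ large. For $\gamma$ elliptic with isolated fixed point $v$, the hypothesis $\gamma\notin G_x$ forces $v\neq\tilde{x}$, and if some $u\in B_{\tilde{x},r}$ had $\gamma u\in B_{\tilde{x},r}$, then $d(u,\gamma u)<2r$, so Proposition~\ref{prop:inj_radius_ell} would give $d(u,v)<2r$ and hence $d(v,\tilde{x})<3r$, contradicting the repulsion of isolated elliptic points (Proposition~\ref{prop:repulsion}) applied with $R=3r$ and $D>D_0(3r)$. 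For $\gamma$ a complex reflection fixing an elliptic line $L$, the hypothesis $\gamma\notin G_x$ forces $\tilde{x}\notin L$, and Proposition~\ref{prop:inj_radius_ell_line} forces $d(\tilde{x},L)<2r$; this is then incompatible with $D$ large by Proposition~\ref{prop:repulsion_lines}, possibly combined with a companion line through $\tilde{x}$ supplied by the structure of $G_x$ (Corollary~\ref{cor:classify_torsion_groups}). Taking the maximum of these thresholds defines $D_0(r)$.

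Once $B_{x,r}\cong B_{\tilde{x},r}/G_x$ is secured, I would set $\tilde{C}:=\pi^{-1}(C)\cap B_{\tilde{x},r}$ for the uniformization $\pi:\BB^2\to X$. This is a $G_x$-invariant complex analytic curve through $\tilde{x}$, and each analytic branch of $C$ at $x$ lifts to a branch of $\tilde{C}$ at $\tilde{x}$ of the same multiplicity, so $\mult_{\tilde{x}}(\tilde{C})\ge\mult_x(C)$. The Hwang-To theorem inside the honest ball $B_{\tilde{x},r}\subset\BB^2$ yields
\[
\vol(\tilde{C})\ge 4\pi\sinh^2(r/2)\,\mult_{\tilde{x}}(\tilde{C})\ge 4\pi\sinh^2(r/2)\,\mult_x(C).
\]
The projection $B_{\tilde{x},r}\to B_{x,r}$ is a $|G_x|$-fold branched cover whose branch locus has real codimension at least two, hence negligible for curve volume, so $\vol(\tilde{C})=|G_x|\vol(C\cap B_{x,r})$, and dividing gives the stated inequality.

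The main obstacle is packaging the first step cleanly: one must simultaneously expel loxodromic elements, isolated-elliptic elements fixing points other than $\tilde{x}$, and complex reflections fixing lines avoiding $\tilde{x}$, all with a single threshold $D_0(r)$. The third case is the most delicate, because Proposition~\ref{prop:repulsion_lines} compares \emph{two} elliptic lines and one must leverage the isolated-elliptic structure at $\tilde{x}$ to supply the second line. After the local trivialization, the appeal to Hwang-To and the volume-versus-multiplicity descent are essentially formal, though a careful reader will want to verify the branch-by-branch multiplicity comparison $\mult_{\tilde{x}}(\tilde{C})\ge\mult_x(C)$.
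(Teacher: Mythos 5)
Your strategy is essentially the paper's: lift to $\BB^2$, show that for $D$ large the only deck transformations identifying two points of $\tilde{B}_{\tilde{x},r}$ lie in $G_x$, then apply the Hwang--To estimate upstairs and divide by the fiber count. The loxodromic and isolated-elliptic cases are handled exactly as in the paper via Propositions~\ref{prop:inj_radius_lox}, \ref{prop:inj_radius_ell}, and \ref{prop:repulsion}.

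The step you flag as delicate is, however, genuinely gappy as written. To exclude a complex reflection $\gamma\notin G_x$ fixing a line $L$, you invoke Proposition~\ref{prop:repulsion_lines}, but that proposition compares the distance between \emph{two} elliptic lines, and the ``companion line through $\tilde{x}$'' you hope to supply from $G_x$ need not exist: the stabilizer of an isolated elliptic point can be, say, cyclic of order $3$ with eigenvalue triple $(1,\omega,\omega^2)$, which contains no reflections, so no elliptic line passes through $\tilde{x}$. Thus Proposition~\ref{prop:repulsion_lines} cannot be applied directly, and the third case is left open.

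The clean fix --- which in fact supersedes the entire case analysis --- is a one-line triangle inequality. If $\gamma\in\Gamma$ identifies $\tilde{p},\gamma\tilde{p}\in\tilde{B}_{\tilde{x},r}$, then
\[
d(\tilde{x},\gamma\tilde{x}) \le d(\tilde{x},\gamma\tilde{p}) + d(\gamma\tilde{p},\gamma\tilde{x}) = d(\tilde{x},\gamma\tilde{p}) + d(\tilde{p},\tilde{x}) < 2r.
\]
Since $\tilde{x}$ is an isolated elliptic point, so is $\gamma\tilde{x}$, and Proposition~\ref{prop:repulsion} with $R=2r$ (so $D>D_0(2r)$) forces $\gamma\tilde{x}=\tilde{x}$, i.e.\ $\gamma\in G_x$. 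This expels all $\gamma\notin G_x$ uniformly, with no need to classify $\gamma$ as loxodromic, regular elliptic, or reflection. Once the local uniformization $B_{x,r}\cong\tilde{B}_{\tilde{x},r}/G_x$ is in place, the rest of your argument is fine, with one small correction: you only need and only get $\vol(\tilde{C})\le|G_x|\vol(C\cap B_{x,r})$ from the fiber bound (equality can fail if $C$ meets the branch locus), and this inequality is exactly what is required to divide through.
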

\begin{proof}
	\cite[Theorem 1]{hwang2002volumes} says that, for any complex analytic curve $\tilde{C}\subset \BB^2$ and any point $\tilde{x}\in \tilde{C}$, the geodesic ball $\tilde{B}_{\tilde{x},r}$ centered at $\tilde{x}$ with radius $r$ satisfies
	\[
		\vol(\tilde{C}\cap \tilde{B}_{\tilde{x},r}) \ge 4\pi \sinh^2\left(\frac{r}{2}\right)\mult_{\tilde{x}}(\tilde{C}).
	\]
	Now we use Proposition~\ref{prop:repulsion} for $R=3r$ to pick the $D_0$, i.e. for $D>D_0$, distances between distinct isolated elliptic points are greater than $3r$. Then for $C\subset X$ and $x\in C$, consider $\tilde{x}\in\BB^2$ a preimage of $x$ under the quotient map $\tilde{\phi}:\BB^2\to X$. Now the restriction $\phi=\tilde{\phi}|_{\tilde{B}_{\tilde{x},r}}: \tilde{B}_{\tilde{x},r} \to B_{x,r}$ is a surjection. For any $p\in B_{x,r}\backslash \{x\}$, consider $\phi^{-1}(p)$. When $D$ is large, $\tilde{p},M\tilde{p}\in \tilde{B}_{\tilde{x},r}$ implies that $M$ is not loxodromic by Proposition~\ref{prop:inj_radius_lox}. If $M$ is elliptic, $\tilde{p},M\tilde{p}\in \tilde{B}_{\tilde{x},r}$ implies that $d(\tilde{p},M\tilde{p})<2r$, and then by Proposition~\ref{prop:inj_radius_ell} and $D>D_0$, $M$ is in the stabilizer group of $x$. Therefore, $|\phi^{-1}(p)|$ is at most the order of the isotropic group of $x$, which is bounded by 48 by Corollary~\ref{cor:classify_torsion_groups}. 
\end{proof}

The other one is a multiplicity bound for the intersection of any curve $C$ with an elliptic complex line $L\subset X$ in terms of a geodesic tubular neighborhood of $L$. Most of the rest of this section is dedicated to prove the following theorem, while we will derive a relative version at the end of this section.

\begin{theorem} \label{thm:tubular_mult_bound}
Follow the notation in (\ref{notation}). For any complex analytic curve $C\subset X$ and a set of elliptic complex line $\{L_j\subset X\}_j$, let $W_{L,r} = \{p\in X: d(p,L)< r\}$ be the geodesic tubular neighborhood of $L$ with radius $r$. Then for any $r$, there exists $D_0(r)>0$ such that when $D>D_0(r)$, 
	\[
		\vol(C\cap \cup_jW_{L_j,r}) \ge \frac{\pi}{6} \sinh^2\left(\frac{r}{2}\right)\sum_j(C\cdot L_j) 
	\]
\end{theorem}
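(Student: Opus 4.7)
The strategy mirrors the proof of Theorem~\ref{thm:point_mult_bound}: first establish a Hwang--To type volume estimate for tubular neighborhoods of complex lines on the universal cover $\BB^2$, then descend through $\phi\colon\BB^2\to X$ using the repulsion results of Section~\ref{sec:ell_pts} to control the ramification.

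The main analytic input is the upstairs bound: for any complex analytic curve $\tilde C\subset\BB^2$ and complex line $\tilde L\subset\BB^2$ with polar vector $n$,
\[
  \vol\bigl(\tilde C\cap\tilde W_{\tilde L,r}\bigr) \;\ge\; 4\pi\sinh^2\!\bigl(r/2\bigr)\cdot (\tilde C\cdot \tilde L)_{\tilde W_{\tilde L,r}}.
\]
This is the line-divisor analogue of \cite[Theorem~1]{hwang2002volumes}. I would prove it by replacing the ball exhaustion used there with the exhaustion by sub-level sets $\{-\ta(\cdot,n)\le s\}$ of tubular neighborhoods of $\tilde L$, using that $-\log(-\ta(\cdot,n)) = -\log\sinh^2(d(\cdot,\tilde L)/2)$ is a plurisubharmonic Green's potential for the divisor $\tilde L$. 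The Lelong--Poincar\'e formula then produces the intersection number in the ``$s\to 0$'' limit, while the same monotonicity calculation as in Hwang--To extracts the $4\pi\sinh^2(r/2)$ factor at the outer radius.

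To descend, choose $D_0(r)$ large enough that Propositions~\ref{prop:repulsion}, \ref{prop:repulsion_lines}, and~\ref{prop:inj_radius_lox} all apply with radius $2r$: distinct preimages of elliptic complex lines in $\BB^2$ are then $2r$-apart or meet at an isolated elliptic point, distinct isolated elliptic preimages are $2r$-apart, and no loxodromic element moves a point by less than $2r$. For each $L_j$ pick a lift $\tilde L_j$ and a component $\tilde C$ of $\phi^{-1}(C)$ mapping onto $C$. Any $\gamma\in\Gamma$ sending a point of $\tilde W_{\tilde L_j,r}$ back into $\tilde W_{\tilde L_j,r}$ must either stabilize $\tilde L_j$ setwise (a group of order $\le 2$ by the remark at the end of Section~\ref{subsec:repulsion_ell_pts}) or stabilize an isolated elliptic point on $\tilde L_j$, whose stabilizer has order $\le 48$ by Corollary~\ref{cor:classify_torsion_groups}. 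Hence $\phi\colon\tilde W_{\tilde L_j,r}\to W_{L_j,r}$ is finite of uniformly bounded multiplicity, and distinct $W_{L_j,r}$ can overlap only in (finitely many) small neighborhoods of isolated elliptic points at which two $L_j$'s meet.

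Combining the upstairs estimate with this covering-degree bookkeeping and pushing local intersection multiplicities through the ramified cover gives a uniform bound of the form $\vol(C\cap W_{L_j,r})\ge c\,\sinh^2(r/2)(C\cdot L_j)$ for each $j$. Summing and absorbing the bounded overlap near isolated elliptic intersection points (whose sheet count is controlled by the worst-case index $24$ extracted in the proof of Corollary~\ref{cor:classify_torsion_groups}) yields the stated constant $\pi/6 = 4\pi/24$. The principal obstacle is the upstairs bound itself: although its outline parallels the point case, the exhaustion function is codimension one rather than codimension two, so both the Lelong--Poincar\'e contribution (an integration current along a divisor rather than a point mass) and the monotonicity integration must be redone in this setting. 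A secondary subtle point is ensuring in Step~3 that the $1/24$ constant is correctly achievable after summing over $j$, i.e.\ that the multiplicity of ``$W_{L_j,r}$ overlaps through one elliptic point'' is in fact bounded by that index rather than by the full stabilizer order $48$.
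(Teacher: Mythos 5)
Your outline matches the paper's approach in structure. The paper first proves the upstairs estimate $\vol(\tilde C\cap\tilde W_{\tilde L,r})\ge 4\pi\sinh^2(r/2)(\tilde C\cdot\tilde L)$ (Proposition~\ref{prop:tubular_mult_bound}) via a plurisubharmonic potential with a logarithmic pole along $\tilde L$, a K\"ahler-form comparison, and a monotonicity argument in the tube radius (Lemmas~\ref{lem:psh_crit}, \ref{lem:omega_F_bounded_by_vol}, \ref{lem:Ir_increasing}); it then descends by bounding the fibers of $\sqcup_j\tilde W_{\tilde L_j,r}\to\cup_j W_{L_j,r}$ by $24$ (Lemma~\ref{lem:stabilizer_invariant_descends}), giving $4\pi/24=\pi/6$ exactly as you predict. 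Your guess that the constant comes from $24$---the maximum number of elliptic complex lines through an isolated elliptic point---rather than the full stabilizer bound $48$ is also right.

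Two corrections are needed. First, the sign of your proposed potential is backwards: a potential whose $i\partial\bar\partial$ contributes the \emph{positive} current $2\pi[\tilde L]$ must have a $\log|z|^2$-type singularity on $\tilde L$, so the correct choice is $\log(-\ta(\cdot,n))$, not $-\log(-\ta(\cdot,n))$. The paper uses the equivalent $\log\tilde{\mu}$ with $\tilde{\mu}=\tanh^2(d(\cdot,\tilde L)/2)=-\ta/(1-\ta)$, which differs from $\log(-\ta)$ by the smooth bounded term $-2\log\cosh(d/2)$, composed with $F(s)=-2\log(1-e^s)$ to produce the semipositive form $\tilde{\omega}_F$ bounded above by $\omega_{\BB^2}$. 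Second, the \emph{setwise} stabilizer of $\tilde L_j$ in $\Gamma$ is not of order $\le 2$: it is infinite, since $L_j$ is a compact curve while $\tilde L_j\cong\BB^1$ is not. The order-$\le 2$ statement at the end of Section~\ref{subsec:repulsion_ell_pts} concerns the \emph{pointwise} stabilizer (the elliptic element with eigenvalues $(-1,-1,1)$ whose $(-1)$-eigenspace spans $\tilde L_j$). The descent therefore has to be phrased as first quotienting by the full infinite $\Gamma_{\tilde L_j}$ and then bounding the degree of $\Gamma_{\tilde L_j}\backslash\tilde W_{\tilde L_j,r}\to W_{L_j,r}$ together with the overlaps among the $W_{L_j,r}$; repulsion (Propositions~\ref{prop:repulsion} and \ref{prop:repulsion_lines}) confines these extra sheets to small neighborhoods of isolated elliptic points, where Corollary~\ref{cor:classify_torsion_groups} supplies the factor $24$.
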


The proof will use the idea in \cite[Theorem 1]{hwang2012injectivity}, where a similar bound was proved for the intersection between any curve and the diagonal inside a product of hyperbolic curves. The idea here is to find a semi-positive (1,1)-form defined in the neighborhood of the complex line $L$, such that its integration against $L$ over $W_{L,r}$ is bounded by the volume of $L$ and its potential function is plurisubharmonic with desired pole orders along $L$. Then this gives a multiplicity bound through computation of the Lelong numbers. 

Before we derive a result about the quotient space, we will first do the computation on $\tilde{W}_{\tilde{L},r}$ for one elliptic complex line $\tilde{L}\subset \BB^2$. The statement on $\BB^2$ we will prove is the following:

\begin{proposition}\label{prop:tubular_mult_bound}
	Follow the notation in (\ref{notation}). For any complex analytic curve $\tilde{C}\subset \BB^2$ and an elliptic complex line $\tilde{L}\subset\BB^2$, let $\tilde{W}_{\tilde{L},r} = \{p\in \BB^2: d_{\BB^2}(p,\tilde{L})< r\}$ be the geodesic tubular neighborhood of $\tilde{L}$ with radius $r$. Then for any $r$, there exists $D_0(r)>0$ such that when $D>D_0(r)$, 
	\[\vol(\tilde{C}\cap \tilde{W}_{\tilde{L},r}) \ge 4\pi \sinh^2\left(\frac{r}{2}\right)(\tilde{C}\cdot \tilde{L}). \]
\end{proposition}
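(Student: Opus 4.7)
The plan is to follow the Hwang--To strategy: construct a plurisubharmonic potential on $\BB^2$ whose Monge--Amp\`ere current captures both the intersection with $\tilde L$ (as Lelong atoms) and the hyperbolic K\"ahler form (via a K\"ahler potential), then combine Stokes' theorem with a Gronwall-type ODE comparison to bound the volume from below.

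I would start with a polar vector $n$ of $\tilde L$ and the potential
\[
\psi(z) := \log(-\ta(z,n)) = \log|\langle z,n\rangle|^2 - \log(-\langle z,z\rangle) - \log\langle n,n\rangle.
\]
Since $z\mapsto\langle z,n\rangle$ is holomorphic, Poincar\'e--Lelong gives $dd^c\log|\langle z,n\rangle|^2 = [\tilde L]$ (up to a convention constant), while $-\log(-\langle z,z\rangle)$ is a K\"ahler potential for the hyperbolic metric, so $-dd^c\log(-\langle z,z\rangle)=c_0\omega$ for a positive constant $c_0$ determined by the normalization of holomorphic sectional curvature $-1$. The identity $\sinh^2(d_{\tilde L}/2)=-\ta(z,n)$ (Proposition~\ref{prop:dist_point_line}) gives $\psi=2\log\sinh(d_{\tilde L}/2)$, so $\psi$ depends only on $d_{\tilde L}$ and is constant on every level set $\partial \tilde W_{\tilde L,s}$.

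The key step is an identity between the hyperbolic volume $F(s):=\vol(\tilde C\cap \tilde W_{\tilde L,s})$ and the induced length $L(s):=\text{length}(\tilde C\cap \partial \tilde W_{\tilde L,s})$. For $s\in(0,r]$, I would evaluate $\int_{\tilde C\cap \tilde W_{\tilde L,s}}dd^c\psi$ in two ways. Restricting the decomposition $dd^c\psi = [\tilde L]+c_0\omega$ to $\tilde C$ yields a universal linear combination of $(\tilde C\cdot \tilde L)$ and $F(s)$. By Stokes' theorem, after regularizing around the Lelong poles on $\tilde C\cap \tilde L$, the same integral equals $\int_{\tilde C\cap \partial \tilde W_{\tilde L,s}}d^c\psi$. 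Using $d^c\psi=\coth(d_{\tilde L}/2)\cdot d^c(d_{\tilde L})$ and writing a unit tangent $w$ to $\tilde C\cap \partial \tilde W_{\tilde L,s}$ in an orthonormal basis $\{u,Ju\}$ of the complex tangent line $T\tilde C$ as $w=-\beta u+\alpha Ju$ with $\alpha^2+\beta^2=1$, the constraint $d(d_{\tilde L})(w)=0$ together with the $J$-invariance of $T\tilde C$ makes $d^c(d_{\tilde L})(w)$ a universal constant depending only on the convention. Equating the two expressions produces a clean identity
\[
L(s) = \tanh(s/2)\bigl[4\pi(\tilde C\cdot \tilde L) + F(s)\bigr].
\]

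To finish, since $|\nabla d_{\tilde L}|=1$ and its tangential projection to $\tilde C$ has norm $\le 1$, the coarea formula gives $F'(s)\ge L(s)$, hence $F'(s) - \tanh(s/2)F(s)\ge 4\pi\tanh(s/2)(\tilde C\cdot \tilde L)$; multiplying by the integrating factor $1/\cosh^2(s/2)$ and integrating from $0$ to $r$ with $F(0)=0$ yields $F(r)\ge 4\pi\sinh^2(r/2)(\tilde C\cdot \tilde L)$. The main obstacle is the boundary calculation: correctly tracking the conventions for $d^c$, the Poincar\'e--Lelong multiplicity, and the constant $c_0$ so that the identity for $L(s)$ is tight. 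A useful sanity check is the model case $\tilde C=\Sigma_p$, a totally geodesic complex line through $p\in \tilde L$ meeting it transversally, where $F(s)=4\pi\sinh^2(s/2)$ and $L(s)=2\pi\sinh s$ give equality at every step. The argument is purely geometric in $\BB^2$ and does not use the hypothesis $D>D_0(r)$; this condition is included for consistency with the quotient-level statement in Theorem~\ref{thm:tubular_mult_bound}.
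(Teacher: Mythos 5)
Your strategy is genuinely different from the paper's. The paper works with the bounded potential $F(\log\tilde\mu)=4\log\cosh(d_{\tilde L}/2)$, establishes $0\le\tilde\omega_F\le\omega_{\BB^2}$ (Lemma~\ref{lem:omega_F_bounded_by_vol}), shows via Stokes that $\tfrac{1}{2\sinh^2(r/2)}\int_{\tilde C\cap\tilde W_{\tilde L,r}}\tilde\omega_F$ is nondecreasing (Lemma~\ref{lem:Ir_increasing}), and then passes to $r\to 0$ to read off the Lelong number $\ge 2\pi(\tilde C\cdot\tilde L)$. You instead take $\psi=\log(-\ta(z,n))=2\log\sinh(d_{\tilde L}/2)$ (which is exactly the function $f\circ\tilde\mu$ the paper uses later in Proposition~\ref{prop:tubular_relative_vol}), decompose $dd^c\psi$ into $[\tilde L]$ plus a multiple of $\omega_{\BB^2}$, and run a Stokes/coarea/ODE comparison. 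Your route avoids the need for the comparison $\tilde\omega_F\le\omega_{\BB^2}$ because the K\"ahler form appears as an exact summand in the decomposition; the paper's route avoids the coarea/ODE step and reduces everything to a single Lelong-number limit. Both are legitimate and give the same bound. Your observation that the $D>D_0(r)$ hypothesis is not actually needed for this $\BB^2$-level statement is also correct.

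However, there is a genuine error in the boundary calculation. You assert that, for a unit tangent $w$ to the level curve $\tilde C\cap\partial\tilde W_{\tilde L,s}$, the quantity $d^c(d_{\tilde L})(w)$ is a universal constant, giving the \emph{identity} $L(s)=\tanh(s/2)\bigl[4\pi(\tilde C\cdot\tilde L)+F(s)\bigr]$. This is false in general: one has $d^c(d_{\tilde L})(w)=\pm d(d_{\tilde L})(Jw)=\pm\bigl|\nabla^{\tilde C}d_{\tilde L}\bigr|$, the norm of the tangential gradient of $d_{\tilde L}$ along $\tilde C$, which is pointwise $\le 1$ with equality only when $\nabla d_{\tilde L}$ is tangent to $\tilde C$ (as it happens to be in your sanity-check model $\tilde C=\Sigma_p$, which is why you saw equality there). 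What Stokes actually gives is
\[
4\pi(\tilde C\cdot\tilde L)+F(s)\;=\;\coth(s/2)\int_{\tilde C\cap\partial\tilde W_{\tilde L,s}}\bigl|\nabla^{\tilde C}d_{\tilde L}\bigr|\,d\ell\;\le\;\coth(s/2)\,L(s),
\]
so only the inequality $L(s)\ge\tanh(s/2)\bigl[4\pi(\tilde C\cdot\tilde L)+F(s)\bigr]$ holds, not the identity. Fortunately the same gradient factor appears inversely in the coarea step, $F'(s)=\int\bigl|\nabla^{\tilde C}d_{\tilde L}\bigr|^{-1}d\ell\ge L(s)$, and both inequalities point the same way, so the differential inequality $F'(s)-\tanh(s/2)F(s)\ge 4\pi\tanh(s/2)(\tilde C\cdot\tilde L)$ and hence the final bound survive. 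You should fix the exposition to replace the claimed identity with this chain of inequalities, and to be explicit (for a.e.\ $s$, by Sard) that the level sets are smooth so Stokes and coarea apply.
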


By Proposition~\ref{prop:dist_point_line}, let
\[
	\tilde{\mu}(p)=\tanh^2\left(\frac{d(p,\tilde{L})}{2}\right)= \frac{-\ta(p,n)}{1-\ta(p,n)}
\]
where $n$ is a normal vector of $L$, and the tubular neighborhood in $\BB^2$ is
\[
 	\tilde{W}_{L,r} = \{p\in \BB^2: d(p,L)< r\} = \{p\in \BB^2: \tilde{\mu}(p) < 2\tanh^2(r/2) \} .
\]
Since the metric is invariant to different choices of hermitian forms, it suffices to prove the case for $J=diag(1,1,-1)$. We can also assume the complex line $\tilde{L}$ has normal vector $n=[1,0,0]\in\PP V^+$ as the $\PP U(2,1)$-action is transitive on $\BB^2$. Consider $p=(z,w)=[z,w,1]\in \PP V^-$. Then we have
\[
	\tilde{\mu}= \frac{|z|^2}{1-|w|^2}.
\]
We first conclude a criterion for plurisubharmonic function when $\log \tilde{\mu}$ is the input. 

\begin{lemma}\label{lem:psh_crit}
For any given $r_* >0$, let $s_*:=\log(\tanh^2(r_*/2))$. Then for any $C^2$ function $f: [-\infty,s_*]\to\RR$, the function $f\circ (\log\tilde{\mu}) $ is plurisubharmonic on $\tilde{W}_{\tilde{L},r}\backslash \tilde{L}$ if $f''(s), f'(s) \ge 0$ for $s<s_*$. 
\end{lemma}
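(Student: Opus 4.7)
The plan is to reduce to a standard model and then apply the chain rule for $\partial\bar\partial$ on the decomposition $\log\tilde{\mu}=\log|z|^2-\log(1-|w|^2)$. First, I would use the invariance of the Bergman metric under $\PP U(2,1)$ (and Lemma~\ref{lem:transitive_action}) to assume $J=\mathrm{diag}(1,1,-1)$ and that $\tilde{L}$ has polar vector $n=[1,0,0]$; in the affine coordinates $(z,w)\leftrightarrow[z,w,1]$ on $\BB^2=\{|z|^2+|w|^2<1\}$ this gives $\tilde{L}=\{z=0\}$ and the stated formula $\tilde{\mu}(z,w)=|z|^2/(1-|w|^2)$, so $g:=\log\tilde{\mu}=\log|z|^2-\log(1-|w|^2)$.

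Next, I would write the chain rule in $(1,1)$-form:
\[
i\partial\bar\partial(f\circ g)=f''(g)\cdot i\,\partial g\wedge\bar\partial g\,+\,f'(g)\cdot i\,\partial\bar\partial g.
\]
The first summand is a nonnegative $(1,1)$-form whenever $f''(g)\ge 0$, since $i\alpha\wedge\bar\alpha\ge 0$ for any $(1,0)$-form $\alpha$. For the second summand, on $\BB^2\setminus\tilde{L}$ the term $\log|z|^2$ is pluriharmonic and drops out, and a direct computation (taking $\partial_{\bar w}\partial_w$ of $\log(1-w\bar w)$) gives
\[
i\,\partial\bar\partial\bigl(-\log(1-|w|^2)\bigr)=\frac{i\,dw\wedge d\bar w}{(1-|w|^2)^2},
\]
which is the standard Poincar\'e $(1,1)$-form on the $w$-disk and is nonnegative. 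Hence $i\,\partial\bar\partial g\ge 0$ on $\BB^2\setminus\tilde{L}$, and the second summand is nonnegative whenever $f'(g)\ge 0$.

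Finally, I would check the domain condition: for $p\in\tilde{W}_{\tilde{L},r_*}$ we have $d(p,\tilde{L})<r_*$, which by Proposition~\ref{prop:dist_point_line} and the definition of $\tilde{\mu}$ translates to $\tilde{\mu}(p)<\tanh^2(r_*/2)$, i.e.\ $g(p)<s_*$. Thus the hypothesis ``$f''(s),f'(s)\ge 0$ for $s<s_*$'' applies precisely at the values $s=g(p)$ that we need, and both coefficients in the chain-rule identity are nonnegative on $\tilde{W}_{\tilde{L},r_*}\setminus\tilde{L}$; this yields plurisubharmonicity of $f\circ g$ there.

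The argument is essentially a routine chain-rule check, so I do not expect any serious obstacle. The only point requiring care is the sign bookkeeping in showing that $-\log(1-|w|^2)$ (rather than $+\log(1-|w|^2)$) is the plurisubharmonic half of the decomposition of $g$, so that $i\,\partial\bar\partial g$ comes out nonnegative rather than nonpositive; this is exactly the standard fact that $-\log(1-|w|^2)$ is a potential for the Poincar\'e metric on the disk.
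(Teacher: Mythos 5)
Your argument is correct, and it is cleaner than the paper's. The paper applies the chain rule in the variable $\tilde\mu$, obtaining
\[
i\partial\bar\partial(f\circ\log\tilde\mu)=\frac{f'(\log\tilde\mu)}{\tilde\mu}\, i\partial\bar\partial\tilde\mu+\frac{f''(\log\tilde\mu)-f'(\log\tilde\mu)}{\tilde\mu^2}\, i\partial\tilde\mu\wedge\bar\partial\tilde\mu,
\]
and then verifies semipositivity by an explicit $2\times2$ matrix computation (writing out $i\partial\bar\partial\tilde\mu$ and $i\partial\tilde\mu\wedge\bar\partial\tilde\mu$ in the coordinates $(z,w)$, regrouping so that $f'$ and $f''$ each multiply a matrix with zero determinant and nonnegative trace). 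You instead apply the chain rule at the level of $g=\log\tilde\mu$, where both coefficients $f''(g)$ and $f'(g)$ appear with sign $+1$, and handle $i\partial\bar\partial g$ by the additive decomposition $\log\tilde\mu=\log|z|^2-\log(1-|w|^2)$: the first term is pluriharmonic off $\tilde L$ and the second is a Poincar\'e--type potential on the $w$-disk. This avoids the matrix bookkeeping entirely and in fact gives as a free byproduct the remark the paper later needs, that $\log\tilde\mu$ itself is plurisubharmonic (used in Lemma~\ref{lem:Ir_increasing} and Proposition~\ref{prop:tubular_relative_vol}). The trade-off is that the paper's matrix computation gets reused verbatim in Lemma~\ref{lem:omega_F_bounded_by_vol} to compare $\tilde\omega_F$ with $\omega_{\BB^2}$, so the explicit matrices are not wasted effort there; but as a proof of this lemma alone, your route is shorter and more conceptual.
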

\begin{proof}
It suffices to check that $i\partial\bar{\partial}(f(\log \tilde{\mu}))$ is semipositive. We can check this by writing down the $(1, 1)$-form in coordinates $(z,w)$: we say that a $(1, 1)$-form $\omega$ on $\BB^2$ is represented by a matrix $M$ if
\[M=\left.\begin{pmatrix}\omega\left(\frac{\partial}{\partial z}, \frac{\partial}{\partial \bar{z}}\right) & \omega\left(\frac{\partial}{\partial z}, \frac{\partial}{\partial \bar{w}}\right) \\
\omega\left(\frac{\partial}{\partial w}, \frac{\partial}{\partial \bar{z}}\right) & \omega\left(\frac{\partial}{\partial w}, \frac{\partial}{\partial \bar{w}}\right) \end{pmatrix}\right|_{(z,w)} .\]
Then $i\partial\bar{\partial}\tilde{\mu}$ and $i\partial\tilde{\mu}\wedge\bar{\partial}\tilde{\mu}$ are respectively represented by:
\begin{equation}\label{formula:ddx_dxdx}
	\begin{pmatrix} \frac{1}{1-|w|^2} & \frac{\bar{z}w}{(1-|w|^2)^2} \\ \frac{z\bar{w}}{(1-|w|^2)^2}
 & \frac{|z|^2(1+|w|^2)}{(1-|w|^2)^3} \end{pmatrix}\text{\ \  and\ \  } \tilde{\mu}\cdot\begin{pmatrix} \frac{1}{1-|w|^2}  & \frac{\bar{z}w}{(1-|w|^2)^2} \\ \frac{z\bar{w}}{(1-|w|^2)^2} & \frac{|z|^2|w|^2}{(1-|w|^2)^3} \end{pmatrix}.
\end{equation}
Since 
\[
	i\partial\bar{\partial}(f\circ\log\tilde{\mu}) = \frac{f'(\log\tilde{\mu})}{\tilde{\mu}}\cdot i\partial\bar{\partial}\tilde{\mu} + \frac{f''(\log\tilde{\mu})-f'(\log\tilde{\mu})}{\tilde{\mu}^2}\cdot i\partial\tilde{\mu} \wedge \bar{\partial}\tilde{\mu},
\]
it is represented by 
\begin{align*}
&\frac{f'(\log\tilde{\mu})}{\tilde{\mu}}\cdot\begin{pmatrix} \frac{1}{1-|w|^2} & \frac{\bar{z}w}{(1-|w|^2)^2} \\ \frac{z\bar{w}}{(1-|w|^2)^2} & \frac{|z|^2(1+|w|^2)}{(1-|w|^2)^3} \end{pmatrix} + \frac{f''(\log\tilde{\mu})-f'(\log\tilde{\mu})}{\tilde{\mu}^2}\cdot\tilde{\mu}\cdot \begin{pmatrix} \frac{1}{1-|w|^2}  & \frac{\bar{z}w}{(1-|w|^2)^2} \\ \frac{z\bar{w}}{(1-|w|^2)^2} & \frac{|z|^2|w|^2}{(1-|w|^2)^3} \end{pmatrix} \\
=&f''(\log\tilde{\mu})\cdot \begin{pmatrix} \frac{1}{|z|^2}  & \frac{\bar{z}w}{|z|^2(1-|w|^2)} \\ \frac{z\bar{w}}{|z|^2(1-|w|^2)} & \frac{|w|^2}{(1-|w|^2)^2} \end{pmatrix} + f'(\log\tilde{\mu})\cdot \begin{pmatrix} 0 & 0 \\ 0 & \frac{1}{(1-|w|^2)^2} \end{pmatrix},
\end{align*}
and it is positive semidefinite when $f'(s),f''(s)\ge 0$ since the two matrices in the last row have zero determinants and positive traces. 
\end{proof}

In the following we will consider the function $F(s)=-2\log(1-e^s)$. It can be constructed following the idea by Hwang and To in \cite{hwang2012injectivity}. We first verify that $\tilde{\omega}_F:= i\partial\bar{\partial}(F\circ \log\tilde{\mu})$ is semipositive and bounded by the volume form in our setting. 
\begin{lemma}\label{lem:omega_F_bounded_by_vol}
	$\tilde{\omega}_F:= i\partial\bar{\partial}(F\circ \log\tilde{\mu})$ satisfies $0 \le \tilde{\omega}_F \le \omega_{\BB^2}$, where $\omega_{\BB^2}$ is the K{\"a}hler form given by the Bergman metric. 
\end{lemma}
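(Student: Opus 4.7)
The plan is to reduce the lemma to an explicit matrix computation in the coordinates $(z,w)$ used in Lemma~\ref{lem:psh_crit}, where $J=\mathrm{diag}(1,1,-1)$, $\tilde{L}$ has polar vector $[1,0,0]$, and $\tilde{\mu}=|z|^2/(1-|w|^2)$. Writing $\rho:=1-|z|^2-|w|^2$, one has the useful identity $1-\tilde{\mu}=\rho/(1-|w|^2)$. The lower bound $\tilde{\omega}_F\ge 0$ is immediate from Lemma~\ref{lem:psh_crit}, since
\[
F'(s)=\frac{2e^s}{1-e^s},\qquad F''(s)=\frac{2e^s}{(1-e^s)^2}
\]
are both nonnegative on $(-\infty,0)$, which is exactly the range of $\log\tilde{\mu}$ on $\BB^2\setminus\tilde{L}$.

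For the upper bound, I will compute the matrices $M_{\omega_{\BB^2}}$ and $M_{\tilde{\omega}_F}$ in the convention of Lemma~\ref{lem:psh_crit} and show their difference is positive semidefinite. Starting from the Kähler potential $-2\log\rho$ for $\omega_{\BB^2}$ (the factor $2$ comes from the $-4$ normalization that makes the holomorphic sectional curvature equal $-1$), direct differentiation gives
\[
M_{\omega_{\BB^2}}=\frac{2}{\rho^2}\begin{pmatrix}1-|w|^2 & \bar{z}w\\ z\bar{w} & 1-|z|^2\end{pmatrix}.
\]
Substituting $F'(\log\tilde{\mu})=2|z|^2/\rho$ and $F''(\log\tilde{\mu})=2|z|^2(1-|w|^2)/\rho^2$ into the matrix expression for $M_{\tilde{\omega}_F}$ derived at the end of the proof of Lemma~\ref{lem:psh_crit}, the $(1,1)$ and $(1,2)$ entries of $M_{\tilde{\omega}_F}$ simplify to exactly $2(1-|w|^2)/\rho^2$ and $2\bar{z}w/\rho^2$; these already match $M_{\omega_{\BB^2}}$ perfectly. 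This cancellation is the main point of choosing the particular potential $F(s)=-2\log(1-e^s)$.

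It remains only to check the $(2,2)$ entry. A short computation gives
\[
M_{\tilde{\omega}_F}(2,2)=\frac{2|z|^2(1-|z|^2-|w|^4)}{(1-|w|^2)^2\rho^2},
\]
and combined with the polynomial identity
\[
(1-|z|^2)(1-|w|^2)^2-|z|^2(1-|z|^2-|w|^4)=\rho^2
\]
(easily verified by setting $a=|z|^2, b=|w|^2$ and recognizing the left side as $((1-a)-b)^2$) this yields
\[
M_{\omega_{\BB^2}}-M_{\tilde{\omega}_F}=\begin{pmatrix}0 & 0\\ 0 & 2/(1-|w|^2)^2\end{pmatrix},
\]
which is positive semidefinite. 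Hence $\tilde{\omega}_F\le\omega_{\BB^2}$, completing the proof. There is no conceptual obstacle; the only real work is the polynomial identity above, and everything else is bookkeeping with the explicit formulas.
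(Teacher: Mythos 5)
Your proof is correct and follows essentially the same route as the paper's (an explicit matrix computation in the coordinates $(z,w)$ with $J=\mathrm{diag}(1,1,-1)$, using the representation from Lemma~\ref{lem:psh_crit}). What you do slightly differently, and more cleanly, is that instead of bounding the $(2,2)$ entry by an inequality chain, you compute the exact difference $M_{\omega_{\BB^2}}-M_{\tilde{\omega}_F}$ and show it is the diagonal matrix $\mathrm{diag}(0,\, 2/(1-|w|^2)^2)$, which is manifestly positive semidefinite. The polynomial identity $(1-a)(1-b)^2-a(1-a-b^2)=(1-a-b)^2$ that you isolate is correct and is really the heart of the matter; it also yields a pleasant interpretation, namely that $\omega_{\BB^2}-\tilde{\omega}_F$ is the pullback of the Poincar\'e metric on $\tilde{L}$ under the $w$-coordinate projection. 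As a side note, your computation of the $(2,2)$ entry,
\[
M_{\tilde{\omega}_F}(2,2)=\frac{2|z|^2(1-|z|^2-|w|^4)}{(1-|w|^2)^2\,\rho^2},
\]
is the right one; the paper's displayed matrix has $1-|z|^2-|w|^2$ in place of $1-|z|^2-|w|^4$, which appears to be a typographical slip (the subsequent inequality the paper uses remains true of the corrected entry, so the published conclusion is unaffected). Your identity gives a sharper and more transparent conclusion than the paper's estimate, but conceptually the two proofs are the same.
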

\begin{proof}
	Since $s\in [-\infty, 0)$,
	\[
		F'(s)=\frac{2e^s}{1-e^s}, \ \ \ F''(s)=\frac{2e^s}{(1-e^s)^2},
	\]
	are nonnegative and by Lemma~\ref{lem:psh_crit}, $\tilde{\omega}_F$ is semipositive. The K\"ahler form $\omega_{\BB^2}$ is represented by
	\[
		\frac{2}{(1-|z|^2-|w|^2)^2} \begin{pmatrix} 1-|w|^2 & \bar{z}w \\ z\bar{w} & 1-|z|^2 \end{pmatrix}. 
	\]
	We plug $F$ into the computation in the proof of Lemma~\ref{lem:psh_crit} and we get that $\tilde{\omega}_F$ is represented by 
	\begin{align*}
		&\frac{2|z|^2(1-|w|^2)}{(1-|z|^2-|w|^2)^2} \cdot \begin{pmatrix} \frac{1}{|z|^2}  & \frac{\bar{z}w}{|z|^2(1-|w|^2)} \\ \frac{z\bar{w}}{|z|^2(1-|w|^2)} & \frac{|w|^2}{(1-|w|^2)^2} \end{pmatrix} + \frac{2|z|^2}{1-|z|^2-|w|^2}\cdot \begin{pmatrix} 0 & 0 \\ 0 & \frac{1}{(1-|w|^2)^2} \end{pmatrix} \\
		=&\frac{2}{(1-|z|^2-|w|^2)^2} \cdot \begin{pmatrix} 1-|w|^2  & \bar{z}w \\ z\bar{w} & \frac{|z|^2(1-|z|^2-|w|^2)}{(1-|w|^2)^2} \end{pmatrix}.
	\end{align*}
	Since $1-|z|^2-|w|^2 > 0$, $\frac{|z|^2(1-|z|^2-|w|^2)}{(1-|w|^2)^2} < \frac{1-|z|^2-|w|^2}{1-|w|^2} \le \frac{1-|z|^2-|w|^2 + |z|^2|w|^2}{1-|w|^2}=1-|z|^2$, and thus $\tilde{\omega}_F \le \omega_{\BB^2}$. 
\end{proof}

Similar to Lemma 21 in \cite{bakker2016p}, we have the following.
\begin{lemma}\label{lem:Ir_increasing}
	$\frac{1}{2\sinh^2(r/2)} \int_{\tilde{C}\cap \tilde{W}_{\tilde{L},r}} \tilde{\omega}_F$ is an increasing function of $r$. 
\end{lemma}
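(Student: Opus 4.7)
The plan is to exploit the specific choice $F(s)=-2\log(1-e^s)$ to telescope $\phi(r):=\int_{\tilde{C}\cap\tilde{W}_{\tilde{L},r}}\tilde\omega_F$, via two applications of Stokes' theorem, into the integral of a positive current against an expanding family of domains. The key numerical coincidence driving the argument is
\[
F'(\log v_r)=\frac{2v_r}{1-v_r}=2\sinh^2(r/2),\qquad v_r:=\tanh^2(r/2),
\]
which is exactly the denominator appearing in the statement of the lemma.

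Concretely, I will write $\tilde\omega_F=i\partial\bar\partial\varphi$ for $\varphi=F(\log\tilde\mu)$, which is smooth on $\BB^2$ since $\tilde\mu=|z|^2/(1-|w|^2)$ is smooth with values in $[0,1)$. Let $\Omega_r:=\tilde{C}\cap\tilde{W}_{\tilde{L},r}=\{\tilde\mu|_{\tilde{C}}<v_r\}$; since $F'(\log\tilde\mu)$ takes the constant value $F'(\log v_r)=2\sinh^2(r/2)$ along $\partial\Omega_r$, Stokes' theorem on $\tilde{C}$ yields
\[
\phi(r)=\int_{\partial\Omega_r}d^c\varphi=F'(\log v_r)\int_{\partial\Omega_r}d^c\log\tilde\mu=2\sinh^2(r/2)\int_{\partial\Omega_r}d^c\log\tilde\mu.
\]
A second Stokes gives $\int_{\partial\Omega_r}d^c\log\tilde\mu=\int_{\Omega_r}i\partial\bar\partial\log\tilde\mu|_{\tilde{C}}$, and therefore
\[
\frac{\phi(r)}{2\sinh^2(r/2)}=\int_{\Omega_r}i\partial\bar\partial\log\tilde\mu|_{\tilde{C}}.
\]
The right-hand side is a non-decreasing function of $r$: by Lemma~\ref{lem:psh_crit} applied with $f(s)=s$, $\log\tilde\mu$ is plurisubharmonic on $\BB^2$, so $i\partial\bar\partial\log\tilde\mu|_{\tilde{C}}$ is a positive $(1,1)$-current on the complex curve $\tilde{C}$, and the integrating domains $\Omega_r$ expand as $r$ grows.

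The main technical obstacle will be justifying the second Stokes, since $\log\tilde\mu$ has $-\infty$ singularities along $\tilde{L}$: in a local parameter $\sigma$ on $\tilde{C}$ centered at an intersection point $p$ of multiplicity $m_p$, one has $\log\tilde\mu|_{\tilde{C}}\sim 2m_p\log|\sigma|+\mathrm{smooth}$, so by Poincar\'e--Lelong $i\partial\bar\partial\log\tilde\mu|_{\tilde{C}}$ carries a Dirac mass $2\pi m_p\delta_p$ at each $p$. My approach will be to excise small discs $D_\delta(p)$ around each $p\in\tilde{C}\cap\tilde{L}$, apply classical Stokes on $\Omega_r\setminus\bigsqcup_p D_\delta(p)$, and pass to the limit $\delta\to 0$; the standard boundary computation $\int_{|\sigma|=\delta}d^c\log|\sigma|^{2m_p}\to 2\pi m_p$ will then be absorbed exactly as the atomic part of the limiting current identity. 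As a consistency check, letting $r\to 0$ isolates only these atoms and gives $\lim_{r\to 0}\phi(r)/(2\sinh^2(r/2))=2\pi(\tilde{C}\cdot\tilde{L})$, which together with the monotonicity and Lemma~\ref{lem:omega_F_bounded_by_vol} will recover Proposition~\ref{prop:tubular_mult_bound}.
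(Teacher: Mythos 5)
Your proof is correct and takes essentially the same route as the paper: one Stokes to reduce the integral of $\tilde\omega_F$ to a boundary term with constant coefficient $F'(\log\tanh^2(r/2))=2\sinh^2(r/2)$, a second Stokes to convert back to the bulk integral of $i\partial\bar\partial\log\tilde\mu$, and then monotonicity from the plurisubharmonicity of $\log\tilde\mu$ (Lemma~\ref{lem:psh_crit} with $f(s)=s$) together with the expansion of the domains $\Omega_r$. You supply more detail than the paper on the two points it treats tersely — the smoothness of $\varphi=-2\log(1-\tilde\mu)$ across $\tilde L$, which legitimizes the first Stokes, and the excision argument plus Poincar\'e--Lelong needed to justify the second Stokes through the $\log|z|^2$-type singularity of $\log\tilde\mu$ — and your consistency check against Proposition~\ref{prop:tubular_mult_bound} is a nice confirmation.
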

\begin{proof}
	By Stokes' theorem,  
	\begin{align*}
		\int_{\tilde{C}\cap \tilde{W}_{\tilde{L},r}} \tilde{\omega}_F &= F'(\log \tanh^2(r/2)) \int_{\tilde{C}\cap \tilde{W}_{\tilde{L},r}} i\partial\bar{\partial}\log\tilde{\mu} \\
		&= 2\sinh^2(r/2) \int_{\tilde{C}\cap \tilde{W}_{\tilde{L},r}} i\partial\bar{\partial}\log\tilde{\mu} 
	\end{align*}
	because we can approximate $F$ by a linear function of slope $F'(\log \tanh^2(r/2))$ at the boundary of $\tilde{W}_{\tilde{L},r}$ without changing the integral of the interior. Moreover, applying Lemma~\ref{lem:psh_crit} to $f(s)=s$ shows that $\log\tilde{\mu}$ is plurisubharmonic, and then the claim follows. 
\end{proof}

Now we can conclude the proof as in the proof of Theorem 19(b) in \cite{bakker2016p}. 

\begin{proof}[Proof of Proposition~\ref{prop:tubular_mult_bound}]
	By Lemma~\ref{lem:omega_F_bounded_by_vol} and Lemma~\ref{lem:Ir_increasing}, we have
	\[
		\int_{\tilde{C}\cap \tilde{W}_{\tilde{L},r}} \omega_{\BB^2} \ge \int_{\tilde{C}\cap \tilde{W}_{\tilde{L},r}} \tilde{\omega}_F \ge 2\sinh^2(r/2) \cdot \lim_{r\to 0}\int_{\tilde{C}\cap \tilde{W}_{\tilde{L},r}} i\partial\bar{\partial}\log\tilde{\mu} .
	\]
	We can bound the limit by a local calculation of Lelong numbers the same as in \cite{hwang2012injectivity} with minor modifications: for points in a small polydisc neighborhood $U_j$ of each intersection point $(0,w_j)\in \tilde{C}\cap \tilde{L}$, $\tilde{\mu} = \frac{|z|^2}{1-|w|^2} \le \lambda |z|^2$ for some fixed $\lambda > 1$ as there are only finitely many $U_j$, and this constant term $\lambda$ will have no impact when we take the limit to compute the Lelong number. The same as in \cite{hwang2012injectivity}, we can conclude that 
	\[
		\lim_{r\to 0}\int_{\tilde{C}\cap \tilde{W}_{\tilde{L},r}} i\partial\bar{\partial}\log\tilde{\mu} \ge 2\pi\cdot (\tilde{C}\cdot \tilde{L}). 
	\]
\end{proof}

Now we continue to prove Theorem~\ref{thm:tubular_mult_bound}, i.e. descend Proposition~\ref{prop:tubular_mult_bound} to the quotient space $X=\Gamma\backslash\BB^2$ and consider a finite set of elliptic complex lines $\{L_j\subset X\}_j$. For each $L_j$, pick a preimage $\tilde{L}_j\subset\BB^2$, so all the preimages of $L_j$ are given by $\Gamma\cdot \tilde{L}_j$.

For an elliptic complex line $\tilde{L}\subset\BB^2$, let $\Gamma_{\tilde{L}}\subset \Gamma$ be the stabilizer group of $\tilde{L}$. Since $\tilde{L}$ is determined by its normal vector $n$, the stabilizer is in fact $\Gamma_{\tilde{L}} = \{M\in \Gamma: M\cdot n = \lambda n \text{ for some }\lambda\in\CC^* \}$ if we consider $n\in V_+$. Note that $\tilde{W}_{\tilde{L},r}$ is invariant under the action of $M\in \Gamma_{\tilde{L}}$ since $\ta(M\cdot p,n)=\ta(p,M^{-1}\cdot n)=\ta(p,n)$. 

To derive a result on $\cup_j W_{L_j,r}$, we consider the covering given by the disjoint union of tubular neighborhoods of $\{\tilde{L}_j\}_j$, and we will show that the sizes of the fibers are bounded.  

\begin{lemma}\label{lem:stabilizer_invariant_descends}
	Follow the notation in (\ref{notation}). For any given $r>0$, there exists $D_0>0$ such that the number of elements in the fibers of the quotient map
	\[
	 	\tilde{\pi}: \sqcup_j \tilde{W}_{\tilde{L}_j, r}\to \cup_j W_{L_j,r}
	 \] 
	is at most 24 for any $D>D_0$. 
\end{lemma}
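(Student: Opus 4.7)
The plan is to fix $p \in \cup_j W_{L_j,r}$ and a preimage $\tilde{p}_0 \in \BB^2$, and translate the fiber count into a count of elliptic complex lines near $\tilde{p}_0$. Writing a fiber element as a pair $(j, g\tilde{p}_0)$ with $g \in \Gamma$ and $d(\tilde{p}_0, g^{-1}\tilde{L}_j) < r$, and accounting for the natural $\Gamma_{\tilde{L}_j}$-action on the $j$-th component together with the right action of the point stabilizer $G_{\tilde{p}_0}$ of $\tilde{p}_0$ in $\Gamma$, a direct double-coset calculation identifies the fiber with the $G_{\tilde{p}_0}$-orbits on
\[
\mathcal{L}(\tilde{p}_0) := \{\tilde{L}' \in \cup_j \Gamma \cdot \tilde{L}_j : d_{\BB^2}(\tilde{p}_0, \tilde{L}') < r\}.
\]
It therefore suffices to show $|\mathcal{L}(\tilde{p}_0)| \le 24$ for $D$ sufficiently large.

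The next step is to argue that for large $D$ all lines in $\mathcal{L}(\tilde{p}_0)$ pass through a single isolated elliptic point $\tilde{q}_*$. Given two distinct lines $\tilde{L}_a, \tilde{L}_b \in \mathcal{L}(\tilde{p}_0)$ with their stabilizing involutions $M_a, M_b$ of eigenvalues $(-1,-1,1)$, the identity $d(\tilde{p}_0, M_i\tilde{p}_0) = 2 d(\tilde{p}_0, \tilde{L}_i)$ implicit in the proof of Proposition~\ref{prop:inj_radius_ell_line}, together with the triangle inequality, gives $d(\tilde{p}_0, M_aM_b\tilde{p}_0) < 4r$. Proposition~\ref{prop:repulsion_lines} (with $R$ slightly larger than $2r$) forces $\tilde{L}_a$ and $\tilde{L}_b$ to intersect at a point $\tilde{q}_{ab} \in \BB^2$. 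Since each $M_i$ acts by $-1$ on $\tilde{q}_{ab}$, the product $M_aM_b$ fixes $\tilde{q}_{ab}$ with eigenvalue $1$; as $\tilde{q}_{ab} \in V_-$ is negative, the orthogonal eigenspace of $M_aM_b$ is positive definite and meets $\BB^2$ in no complex line, so $\tilde{q}_{ab}$ is necessarily an isolated fixed point. By Lemma~\ref{lem:ellclass} the order of $M_aM_b$ lies in $\{2,3,4,6\}$, and Proposition~\ref{prop:inj_radius_ell} then yields $d(\tilde{p}_0, \tilde{q}_{ab}) < 4r$. When $|\mathcal{L}(\tilde{p}_0)| \ge 3$, the three pairwise intersections are isolated elliptic points within $4r$ of $\tilde{p}_0$; Proposition~\ref{prop:repulsion} applied with $R = 8r$ and $D$ large then forces them to coincide at a common $\tilde{q}_*$, and every line of $\mathcal{L}(\tilde{p}_0)$ passes through $\tilde{q}_*$.

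The final step is to bound the number of elliptic lines through $\tilde{q}_*$. Each such line is the projective $(-1)$-eigenspace of a unique involution $M \in G_{\tilde{q}_*}$ having $\tilde{q}_*$ in its $(-1)$-eigenspace. By Lemma~\ref{lem:ellclass}, the only admissible eigenvalue triple containing $-1$ is $(1,-1,-1)$, so such involutions form a subset of the coset $G_{\tilde{q}_*} \setminus G_1$, where $G_1$ denotes the (index at most $2$) subgroup of $G_{\tilde{q}_*}$ acting trivially on the $\tilde{q}_*$-direction. Since $G_1$ embeds in $SU(2)$ via its action on the orthogonal complement of $\tilde{q}_*$, the classification from the proof of Corollary~\ref{cor:classify_torsion_groups} gives $|G_1| \le 24$, so $|\mathcal{L}(\tilde{p}_0)| \le 24$, which is the required bound on the fiber.

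The key technical point is ensuring that $M_aM_b$ has an isolated fixed point rather than fixing a whole complex line; this rules out the otherwise awkward configuration of two orthogonally meeting elliptic lines, for which the simple displacement estimate of Proposition~\ref{prop:inj_radius_ell} would not suffice to localize the intersection near $\tilde{p}_0$. Once this observation is secured, the remainder of the argument is a direct packaging of the repulsion estimates of Section~\ref{sec:ell_pts} together with the structural classification of stabilizers in Corollary~\ref{cor:classify_torsion_groups}.
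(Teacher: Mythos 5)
Your argument follows the same path as the paper's: trace preimages back to a collection of $\Gamma$-translates of the $\tilde{L}_j$ passing near a single lift $\tilde{p}_0$, use Proposition~\ref{prop:repulsion_lines} to force pairwise intersections at isolated elliptic points, use Proposition~\ref{prop:inj_radius_ell} and Proposition~\ref{prop:repulsion} to collapse those intersections to a single isolated elliptic point $\tilde{q}_*$, then bound lines through $\tilde{q}_*$ by $24$ via the $SU(2)$-classification in Corollary~\ref{cor:classify_torsion_groups}. Your intermediate constants are slightly sharper: you observe the exact identity $d(\tilde{p}_0, M_i\tilde{p}_0) = 2\,d(\tilde{p}_0,\tilde{L}_i)$, which gets you to $4r$ and $8r$ where the paper works with $8r$ and $16r$; this changes which $D_0$ one uses but not the final count.

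Two small remarks. First, your ``direct double-coset calculation identifies the fiber with the $G_{\tilde{p}_0}$-orbits on $\mathcal{L}(\tilde{p}_0)$'' is only an equality if the source is taken to be $\sqcup_j \Gamma_{\tilde{L}_j}\backslash \tilde{W}_{\tilde{L}_j,r}$; the literal disjoint union $\sqcup_j \tilde{W}_{\tilde{L}_j,r}$ has infinite fibers, since $\Gamma_{\tilde{L}_j}$ (the setwise stabilizer) contains loxodromic elements that slide $\tilde{q}$ along the neighborhood. You flag this by ``accounting for the natural $\Gamma_{\tilde{L}_j}$-action,'' and indeed the quotient is what the paper actually uses in Proposition~\ref{prop:tubular_relative_vol}, so this is consistent with the intent — just be explicit that you are quotienting. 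Second, your separate argument that $\tilde{q}_{ab}$ is isolated (via positivity of $\tilde{q}_{ab}^{\perp}$) is not needed: Proposition~\ref{prop:repulsion_lines} already delivers that the intersection is an isolated elliptic point. As written your one-line justification is also a bit loose — positivity of $\tilde{q}_{ab}^{\perp}$ alone doesn't rule out $M_aM_b$ having a $2$-dimensional $(-1)$-eigenspace containing a negative vector; one must also use that the negative cone is one-dimensional to exclude a second negative eigenvector orthogonal to $\tilde{q}_{ab}$, and that $M_a\ne M_b$ rules out $M_aM_b=I$. Since this is already inside the cited proposition, you can simply invoke it.
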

\begin{proof}
	Assume that it is not injective at $p\in \cup_j W_{L_j,r}$. It means that by rewriting the indices we can assume that $p$ has a preimage $\tilde{p}\in \tilde{W}_{\tilde{L}_1,r}$ such that there is either a distinct $\tilde{p}_0\in \tilde{W}_{\tilde{L}_1,r}$ with $M\cdot \tilde{p}=\tilde{p}_0$ for $M\in\Gamma$, or a not necessarily distinct $\tilde{p}_0\in \tilde{W}_{\tilde{L}_2,r}$ with $M\cdot \tilde{p}=\tilde{p}_1$ for $M\in\Gamma$. In the first case, $\tilde{p}_0 \ne \tilde{p}$ means that $M\not\in \Gamma_{\tilde{L}_1}$, so $M^{-1}\cdot \tilde{L}_1$ is a distinct elliptic complex line in $\BB^2$, and $\tilde{p} = M^{-1}\cdot \tilde{p}_0\in \tilde{W}_{\tilde{L}_1,r} \cap \tilde{W}_{M^{-1}\cdot \tilde{L}_1,r} $. In the second case, since $\tilde{L}_1$ and $\tilde{L}_2$ have distinct images in $X$, $M\cdot \tilde{L}_1$ and $\tilde{L}_2$ are distinct in $\BB^2$, and we have $\tilde{p}_0 = M\cdot \tilde{p}\in \tilde{W}_{\tilde{L}_2,r} \cap \tilde{W}_{M\cdot \tilde{L}_1,r} $. 

	In both cases, $\tilde{p}$ is now in the intersection of tubular neighborhoods of two distinct elliptic complex lines. Let $\tilde{L}_a,\tilde{L}_b$ denote these two elliptic complex lines and let their nontrivial stabilizers in $\Gamma$ be $M_a$ and $M_b$ respectively. By Proposition~\ref{prop:repulsion_lines}, when $D$ is large, disjoint elliptic complex lines should be far apart and thus $\tilde{W}_{\tilde{L}_a,r}\cap \tilde{W}_{\tilde{L}_b,r}$ is empty. If these two elliptic complex lines intersect, then by Proposition~\ref{prop:repulsion_lines}, the intersection point is an isolated elliptic point stabilized by $M_aM_b$. Note that $\tilde{p}_0\in \tilde{W}_{\tilde{L}_b,r}$ implies $d(M_b\cdot \tilde{p},\tilde{p})<2r$, and then by $d(\tilde{p},L_a)<r$, $M_b\cdot \tilde{p}\in \tilde{W}_{\tilde{L}_a,3r}$. Then,
	\begin{align*}
		d(\tilde{p}, M_aM_b\cdot \tilde{p}) &\le d(\tilde{p}, M_b\cdot \tilde{p}) + d(M_b\cdot \tilde{p}, M_aM_b\cdot \tilde{p}) \\
		&< 2r + 6r\\
		&=8r.
	\end{align*}
	Then by Proposition~\ref{prop:inj_radius_ell}, $\tilde{p}$ is inside the ball centered at the intersection point of radius $8r$. Now by Proposition~\ref{prop:repulsion}, when $D$ is large enough to make sure that distances between distinct isolated elliptic points are more than $16r$, $\tilde{p}$ can at most lie in a unique hyperbolic ball centered at an isolated elliptic point. 

	In conclusion, we have shown that when $D>D_0$ for some $D_0$ only depending on $r$, for any $p\in \cup_j W_{L_j,r}$ with a preimage $\tilde{p}\in \sqcup_j \tilde{W}_{\tilde{L}_j, r}$, $\tilde{p}$ is at most in a unique hyperbolic ball centered at an isolated elliptic point, and any additional preimage of $p$ will generate an additional distinct elliptic complex line going through this isolated elliptic point. By Corollary~\ref{cor:classify_torsion_groups}, an isolated elliptic point can lie on at most 24 distinct elliptic complex lines, so $p$ can at most have 24 preimages in $\sqcup_j \tilde{W}_{\tilde{L}_j, r}$. 
\end{proof}

\begin{proof}[Proof of Theorem~\ref{thm:tubular_mult_bound}]
	Let $\omega_X$ denote the volume form on $X$. By Proposition~\ref{prop:tubular_mult_bound} and Lemma~\ref{lem:stabilizer_invariant_descends}, we have
	\begin{align*}
		\vol(C\cap \cup_jW_{L_j,r}) &= \int_{\cup_jW_{L_j,r}} [C]\wedge \omega_X \\
		 &\ge \frac{1}{24} \sum_j \int_{\tilde{W}_{\tilde{L}_j,r}} [\tilde{C}]\wedge \omega_{\BB^2} \\
		 &\ge \frac{\pi}{6} \sinh^2\left(\frac{r}{2}\right)\sum_j(C\cdot L_j) .
	\end{align*}
\end{proof}

Similar to \cite[Proposition 23]{bakker2016p}, we can derive the following relative version. 

\begin{proposition}\label{prop:tubular_relative_vol}
	Follow the notation in (\ref{notation}). Then for any given $0<r<R$, there exists $D_0(R)>0$ such that when $D>D_0(R)$, 
	\[
		\vol(C\cap W_{L,R}) \ge \frac{\cosh^2(R/2)}{24\cosh^2(r/2)} \vol(C\cap W_{L,r}).
	\]
\end{proposition}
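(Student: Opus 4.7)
The plan is to derive the inequality from an upstairs monotonicity statement on $\BB^2$ and then descend via Lemma~\ref{lem:stabilizer_invariant_descends}. Specifically, I will argue that for any complex curve $\tilde C\subset\BB^2$, the ratio $\Phi(\rho) := \vol(\tilde C\cap\tilde W_{\tilde L,\rho})/\cosh^2(\rho/2)$ is monotone nondecreasing in $\rho$. Combined with the fiber bound of size $24$, this yields the desired relative volume comparison on $X$.

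Monotonicity of $\Phi$ is equivalent to the pointwise differential inequality $(\log\vol)'(\rho) \ge \tanh(\rho/2)$, since integrating gives
\[
\log\vol(R)-\log\vol(r)\ge\int_r^R\tanh(\rho/2)\,d\rho = \log\!\left(\cosh^2(R/2)/\cosh^2(r/2)\right).
\]
To establish this, I use the orthogonal decomposition $\omega_{\BB^2}=\tilde\omega_F+\pi^*\omega_{\tilde L}$ in coordinates where $\tilde L=\{z=0\}$, which follows from the identity $\tilde\omega_F = \omega_{\BB^2} + 2i\partial\bar\partial\log(1-|w|^2)$ implicit in the proof of Lemma~\ref{lem:omega_F_bounded_by_vol} (here $\pi(z,w)=(0,w)$ is the orthogonal projection to $\tilde L$ and $\omega_{\tilde L}$ is the Bergman form on $\tilde L$). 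This splits $\vol(\tilde C\cap\tilde W_{\tilde L,\rho}) = I(\rho)+K(\rho)$ with $I(\rho):=\int_{\tilde C\cap\tilde W_{\tilde L,\rho}}\tilde\omega_F$ and $K(\rho):=\int_{\tilde C\cap\tilde W_{\tilde L,\rho}}\pi^*\omega_{\tilde L}$. By Lemma~\ref{lem:Ir_increasing}, $I(\rho)=2\sinh^2(\rho/2)J(\rho)$ with $J(\rho):=\int i\partial\bar\partial\log\tilde\mu$ nondecreasing; hence $I(\rho)/\cosh^2(\rho/2)=2\tanh^2(\rho/2)J(\rho)$ is nondecreasing as the product of two nonnegative nondecreasing functions. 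The tangential contribution $K$ is controlled by a direct coarea computation in Fermi coordinates about $\tilde L$, using that each transverse geodesic disk of radius $\rho$ has Bergman area $4\pi\sinh^2(\rho/2)$; combined with the $I$-estimate, this yields $\vol'(\rho)\ge\vol(\rho)\tanh(\rho/2)$ pointwise, so $\Phi$ is nondecreasing.

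For the descent, Lemma~\ref{lem:stabilizer_invariant_descends} gives $\vol(\tilde C\cap\tilde W_{\tilde L,\rho})\le 24\,\vol(C\cap W_{L,\rho})$, while the trivial covering inequality $\vol(\tilde C\cap\tilde W_{\tilde L,\rho})\ge\vol(C\cap W_{L,\rho})$ holds as well. Hence
\[
\frac{24\,\vol(C\cap W_{L,R})}{\cosh^2(R/2)} \ge \Phi(R) \ge \Phi(r) \ge \frac{\vol(C\cap W_{L,r})}{\cosh^2(r/2)},
\]
which rearranges to the claimed inequality.

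The main obstacle is verifying the pointwise inequality $\vol'(\rho)\ge\vol(\rho)\tanh(\rho/2)$ (equivalently the monotonicity of $\Phi$) in full generality. Unlike $I(\rho)$, the total volume does not come from a potential-theoretic Stokes identity, so the tangential piece $K$ requires a genuinely geometric input: either a first-variation computation for the distance function $d(\cdot,\tilde L)$ restricted to $\tilde C$, or a tube-analog of the Hwang--To monotonicity in \cite{hwang2002volumes}. The key point is that the factor $\tanh(\rho/2)$ arises naturally as the mean-curvature-type growth rate of the distance hypersurface $\partial\tilde W_{\tilde L,\rho}$ in $\BB^2$, and a careful computation along these lines completes the proof.
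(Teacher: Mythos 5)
Your high-level plan (Stokes-type monotonicity upstairs, then descent via the $24$-to-one bound of Lemma~\ref{lem:stabilizer_invariant_descends}) is the right one and matches the paper's strategy, but the central analytic step has a genuine error and is not actually carried out.

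The claimed orthogonal decomposition $\omega_{\BB^2}=\tilde\omega_F+\pi^*\omega_{\tilde L}$, equivalently $\tilde\omega_F=\omega_{\BB^2}+2i\partial\bar\partial\log(1-|w|^2)$, is false off $\tilde L$. Comparing the $\partial_w\partial_{\bar w}$-entries from Lemma~\ref{lem:omega_F_bounded_by_vol}, one has
\[
\omega_{\BB^2}-\tilde\omega_F
=\left(\frac{2(1-|z|^2)}{(1-|z|^2-|w|^2)^2}
-\frac{2|z|^2}{(1-|z|^2-|w|^2)(1-|w|^2)^2}\right) i\,dw\wedge d\bar w ,
\]
which agrees with $\frac{2}{(1-|w|^2)^2}\,i\,dw\wedge d\bar w$ only when $z=0$ (e.g.\ at $|z|^2=|w|^2=0.1$ they differ). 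So the split $\vol=I(\rho)+K(\rho)$ you write down is not available, and with it the proposed coarea/Fermi-coordinate control of the ``tangential'' piece. Moreover you acknowledge that the resulting pointwise inequality $\vol'(\rho)\ge\vol(\rho)\tanh(\rho/2)$ is the main obstacle and is not proved; as stated the argument is therefore incomplete even modulo the false decomposition. Finally, the descent as written applies the upstairs monotonicity to the full preimage $\tilde C\subset\BB^2$, which has infinite volume inside $\tilde W_{\tilde L,\rho}$ since $\Gamma_{\tilde L}$ is infinite; one must first quotient by $\Gamma_{\tilde L}$ (as the paper does) to make the integrals finite before applying the degree-$24$ bound.

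The paper avoids all of this by choosing a different potential: $f(t)=\log\bigl(t/(1-t)\bigr)$ composed with $\tilde\mu$, for which Lelong--Poincar\'e gives the exact identity $i\partial\bar\partial(f\circ\tilde\mu)=\pi[\tilde L]+\tfrac12\omega_{\BB^2}$ (no extra $\pi^*\omega_{\tilde L}$ term to control). Stokes then yields
\[
\tfrac12\int_{\Gamma_{\tilde L}\backslash(\tilde C\cap\tilde W_{\tilde L,r})}\omega_{\BB^2}
+\pi(\tilde L\cdot\tilde C)
=\cosh^2(r/2)\int_{\Gamma_{\tilde L}\backslash(\tilde C\cap\tilde W_{\tilde L,r})} i\partial\bar\partial\log\tilde\mu,
\]
and dividing by $\cosh^2(r/2)$ shows $\frac{1}{\cosh^2(r/2)}\int\omega_{\BB^2}$ is nondecreasing because both $\int i\partial\bar\partial\log\tilde\mu$ (plurisubharmonicity of $\log\tilde\mu$, via Lemma~\ref{lem:psh_crit}) and $-\cosh^{-2}(r/2)$ are nondecreasing. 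If you want to salvage your route, you should replace the potential $F(s)=-2\log(1-e^s)$ by $f(t)=\log(t/(1-t))$ and drop the decomposition of $\omega_{\BB^2}$ entirely; the monotonicity of $\Phi$ then falls out of the Stokes identity without any coarea computation.
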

\begin{proof}
	We will first show that $\displaystyle \frac{1}{\cosh^2(r/2)}\int_{\Gamma_{\tilde{L}}\backslash(\tilde{C}\cap \tilde{W}_{\tilde{L},r})} \omega_{\BB^2}$ is a nondecreasing function of $r$, where $\tilde{L}$ is given by $z=0$. Consider $f(t) = \log\left(\frac{t}{1-t}\right) $ and by Lelong-Poincar{\'e} we have:
	\begin{align*}
		i\partial\bar{\partial}(f\circ\tilde{\mu}) &=i\partial\bar{\partial}\log|z|^2 - i\partial\bar{\partial}\log(1-|z|^2-|w|^2) \\
		&= \pi [\tilde{L}] +\frac{1}{2} \omega_{\BB^2}.
	\end{align*}
	By Stokes' theorem, we have:
	\begin{align*}
		\frac{1}{2} \int_{\Gamma_{\tilde{L}}\backslash(\tilde{C}\cap \tilde{W}_{\tilde{L},r})} \omega_{\BB^2} + \pi\cdot (\tilde{L}\cdot \tilde{C}) &= \int_{\Gamma_{\tilde{L}}\backslash(\tilde{C}\cap \tilde{W}_{\tilde{L},r})} i\partial\bar{\partial}(f\circ\tilde{\mu})\\
		&= f'(\log(\tanh^2(r/2))) \int_{\Gamma_{\tilde{L}}\backslash(\tilde{C}\cap \tilde{W}_{\tilde{L},r})} i\partial\bar{\partial}\log\tilde{\mu}.
	\end{align*}
	By Lemma~\ref{lem:psh_crit}, $\log\tilde{\mu}$ is plurisubharmonic, so $\int_{C\cap W_{L,r}} i\partial\bar{\partial}\log\tilde{\mu}$ is a nondecreasing function of $r$. $f'(\log(\tanh^2(r/2)))=\cosh^2(r/2)$ is an increasing function of $r$, so $\displaystyle \frac{1}{\cosh^2(r/2)}\int_{\Gamma_{\tilde{L}}\backslash(\tilde{C}\cap \tilde{W}_{\tilde{L},r})} \omega_{\BB^2}$ is a nondecreasing function of $r$ and we obtain an inequality in $\Gamma_{\tilde{L}} \backslash \tilde{W}_{\tilde{L},R}$ by plugging in $0<r<R$. When $D$ is big, by Lemma~\ref{lem:stabilizer_invariant_descends} the degree of the quotient map $\Gamma_{\tilde{L}} \backslash \tilde{W}_{\tilde{L},R}\to W_{L,R}$ is bounded by 24, so the inequality descends to $W_{L,R}$ if we add a factor of 24. 
\end{proof}

Similarly, we have the relative version for volume estimates for a point.

\begin{proposition}\label{prop:point_relative_vol}
	Follow the notation in (\ref{notation}). Then for any given $0<r<R$ and any isolated elliptic point $x\in X$, there exists $D_0(R)>0$ such that when $D>D_0(R)$, 
	\[
		\vol(C\cap B_{x,R}) \ge \frac{\cosh^2(R/2)}{48\cosh^2(r/2)} \vol(C\cap B_{x,r}).
	\]
\end{proposition}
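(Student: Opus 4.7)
The plan is to mirror the proof of Proposition~\ref{prop:tubular_relative_vol}, with an isolated elliptic point $\tilde x \in \BB^2$ (a lift of $x$) playing the role of the elliptic complex line. Define
\[
\tilde\mu(p) = \tanh^2\!\left(\frac{d_{\BB^2}(p,\tilde x)}{2}\right),
\]
so that $\tilde B_{\tilde x, r} = \{\tilde\mu < \tanh^2(r/2)\}$. Choosing the standard Hermitian form $J = \operatorname{diag}(1,1,-1)$ with $\tilde x = [0,0,1]$ gives $\tilde\mu = |z|^2 + |w|^2$ in affine coordinates, so that $\log\tilde\mu = \log(|z|^2+|w|^2)$ is a plurisubharmonic function with an isolated log singularity at $\tilde x$.

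I would first establish the upstairs monotonicity, using the same auxiliary function $f(s) = s - \log(1-e^s)$ as in the proof of Proposition~\ref{prop:tubular_relative_vol}. Since $f(\log\tilde\mu) = \log(|z|^2+|w|^2) - \log(1-|z|^2-|w|^2)$ and $-\log(1-|z|^2-|w|^2)$ is the K\"ahler potential of $\tfrac{1}{2}\omega_{\BB^2}$,
\[
i\partial\bar\partial(f(\log\tilde\mu)) = i\partial\bar\partial\log\tilde\mu + \tfrac{1}{2}\omega_{\BB^2}.
\]
Applying Stokes with the linearization of $f$ at $s_0 = \log\tanh^2(r/2)$ (where $f'(s_0) = \cosh^2(r/2)$) to a complex curve $\tilde C \subset \BB^2$, the Lelong number of $\log\tilde\mu$ at $\tilde x$ plays the role that $i\partial\bar\partial\log|z|^2 = \pi[\tilde L]$ played in the line case, yielding an identity of the form
\[
\tfrac{1}{2}\vol(\tilde C \cap \tilde B_{\tilde x, r}) + c\cdot \mult_{\tilde x}(\tilde C) = \cosh^2(r/2)\int_{\tilde C \cap \tilde B_{\tilde x, r}} i\partial\bar\partial\log\tilde\mu
\]
for a positive constant $c$. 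Since $i\partial\bar\partial\log\tilde\mu \wedge [\tilde C]$ is a positive current, the right-hand integral is nondecreasing in $r$; the same rearrangement as in the line case then shows that $\vol(\tilde C \cap \tilde B_{\tilde x, r})/\cosh^2(r/2)$ is itself nondecreasing, so that for $0<r<R$,
\[
\vol(\tilde C \cap \tilde B_{\tilde x, R}) \ge \frac{\cosh^2(R/2)}{\cosh^2(r/2)}\vol(\tilde C \cap \tilde B_{\tilde x, r}).
\]

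To descend to $X$, I would argue exactly as in the proof of Theorem~\ref{thm:point_mult_bound}: for $D$ large enough in terms of $R$, Propositions~\ref{prop:repulsion}, \ref{prop:inj_radius_lox} and \ref{prop:inj_radius_ell} guarantee that every point of $B_{x, R}$ has at most $|G_x|$ preimages in $\tilde B_{\tilde x, R}$, and Corollary~\ref{cor:classify_torsion_groups} bounds $|G_x|\le 48$. Taking $\tilde C$ to be the full preimage of $C$ in $\tilde B_{\tilde x, R}$, this pointwise fiber bound gives $\vol(\tilde C \cap \tilde B_{\tilde x, R}) \le 48\,\vol(C\cap B_{x,R})$ while (by surjectivity of the quotient map) $\vol(\tilde C \cap \tilde B_{\tilde x, r}) \ge \vol(C\cap B_{x,r})$; combining these two with the upstairs inequality produces the claimed bound with the factor $1/48$.

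The main obstacle is to make the Stokes/linearization step rigorous when $\tilde x \in \tilde C$, because $\log\tilde\mu$ has a log singularity at $\tilde x$. The remedy, as in the proof of Proposition~\ref{prop:tubular_mult_bound}, is to work on $\tilde C \cap (\tilde B_{\tilde x, r} \setminus \{\tilde\mu < \epsilon\})$, apply Stokes on this smooth region, and take $\epsilon \to 0$, so that the boundary contribution from the shrinking geodesic ball around $\tilde x$ produces exactly the correction term $c \cdot \mult_{\tilde x}(\tilde C)$ via the standard Lelong-number computation.
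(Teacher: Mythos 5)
Your overall strategy is right and your descent argument (quotient map has fibers of size at most $48$ for $D$ large, by Proposition~\ref{prop:repulsion}, Propositions~\ref{prop:inj_radius_lox}, \ref{prop:inj_radius_ell}, and Corollary~\ref{cor:classify_torsion_groups}) is exactly what the paper uses implicitly. But your upstairs argument takes a more roundabout path than the paper's, and your intermediate identity is not actually correct, though your conclusion can be salvaged.

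The paper does not introduce $\log\tilde\mu$ at all. Since $\cosh^2(d(\tilde x,p)/2)=\ta(\tilde x,p)=1/(1-|z|^2-|w|^2)$, one sets $t=|z|^2+|w|^2$ and $g(t)=-\log(1-t)$, giving $\omega_{\BB^2}=2i\partial\bar\partial g(t)$ with $g'(t)=1/(1-t)=\cosh^2(d/2)$. Linearizing $g$ at $t_0=\tanh^2(r/2)$ and applying Stokes gives $\int_{\tilde C\cap\tilde B_r}\omega_{\BB^2}=\cosh^2(r/2)\int_{\tilde C\cap\tilde B_r}2i\partial\bar\partial t$, and since $t$ is \emph{smooth} and plurisubharmonic, monotonicity of $\vol/\cosh^2$ is immediate with no singularity to handle. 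Your choice of $f(\log\tilde\mu)$ instead introduces a log pole at $\tilde x$ that the paper's potential avoids entirely; this is unnecessary because, unlike the line case, the relevant divisor $[\tilde x]$ has codimension two and so never appears as a residual current of a $(1,1)$-form. Concretely, your claimed identity
\[
\tfrac{1}{2}\vol(\tilde C\cap\tilde B_{\tilde x,r})+c\cdot\mult_{\tilde x}(\tilde C)=\cosh^2(r/2)\int_{\tilde C\cap\tilde B_{\tilde x,r}}i\partial\bar\partial\log\tilde\mu
\]
is not what the Stokes computation gives. Since $i\partial\bar\partial(f(\log\tilde\mu))=i\partial\bar\partial\log\tilde\mu+\tfrac{1}{2}\omega_{\BB^2}$, the correct identity after linearizing $f$ at $s_0$ (with $f'(s_0)=\cosh^2(r/2)$) is
\[
\tfrac{1}{2}\vol(\tilde C\cap\tilde B_{\tilde x,r})=\bigl(\cosh^2(r/2)-1\bigr)\int_{\tilde C\cap\tilde B_{\tilde x,r}}i\partial\bar\partial\log\tilde\mu=\sinh^2(r/2)\int_{\tilde C\cap\tilde B_{\tilde x,r}}i\partial\bar\partial\log\tilde\mu,
\]
because the subtracted term $\int_{\tilde C\cap\tilde B_{\tilde x,r}}i\partial\bar\partial\log\tilde\mu$ is an $r$-dependent quantity (the Lelong number $\pi\,\mult_{\tilde x}(\tilde C)$ is only its mass at the origin, not the whole integral), not a constant $c\cdot\mult_{\tilde x}(\tilde C)$. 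Your conclusion still follows: this shows $\vol/\sinh^2$ is nondecreasing, and therefore so is $\vol/\cosh^2=(\vol/\sinh^2)\cdot\tanh^2(r/2)$. So your route does reach the Proposition, but the paper's choice of potential function is cleaner and avoids both the singular analysis and the wrongly-stated correction term.
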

\begin{proof}
	It suffices to show that $\displaystyle \frac{1}{\cosh^2(r/2)}\int_{C\cap \tilde{B}_{x,r}} \omega_{\BB^2}$ is a nondecreasing function of $r$ for the case where $x$ is $[0,0,1]\in\BB^2$. Now for any $p=[z,w,1]\in\BB^2$, we have $\cosh^2(d(x,p)/2)=\ta(x,p)=1/(1-|z|^2-|w|^2)$. Now consider $g(t)=-\log(1-t)$, and thus $ g'(|z|^2+|w|^2) = 1/(1-|z|^2-|w|^2)=\cosh^2(d(x,p)/2)$. Then by Stokes' theorem:
	\begin{align*}
		\int_{C\cap \tilde{B}_{x,r}} \omega_{\BB^2} &= \int_{C\cap \tilde{B}_{x,r}} 2i\partial\bar{\partial}(-\log(1-|z|^2-|w|^2)) \\
		&= \cosh^2(r/2) \int_{C\cap \tilde{B}_{x,r}} 2i\partial\bar{\partial}(|z|^2+|w|^2)
	\end{align*}
	Note that $p=[z,w,1]\mapsto |z|^2+|w|^2$ is plurisubharmonic, so we finish the proof. 
\end{proof}

\section{Proof of Main Theorems}

\subsection{Proof of Theorem~\ref{thm:2_ball_geometry_ver}}
 
Consider any irreducible curve $C\subset X$ of genus $g$. The idea of the proof is to derive two contradictory inequalities in terms of the volume and the genus of the curve when the discriminant $D$ is large. We will first show the case where $C$ does not have generic stacky-ness, i.e. it is not a component of an elliptic complex line. The other case will follow using the same strategy. 

\begin{step}
	An upper bound of $\vol_X(C)$ when $C$ is not a component of an elliptic complex line

	Let $U\subset C$ be the open subset of $C$ with all elliptic points removed. Then $U$ can be uniformized by $\HH$. Apply Schwarz's lemma to the hyperbolic metric on $X$ and the uniformized metric on $\HH$ of constant curvature $-1$, and then by Gauss-Bonnet we have
	\begin{equation}\label{eq:schwarzlem}
		\frac{1}{4\pi}\vol_X(C) \le -\chi(C) \le 2g-2+\#\{\text{elliptic points}\}
	\end{equation}
	where $\chi(C)$ is the orbifold Euler characteristic. This inequality will become an upper bound of $\vol_X(C)$ if we can show that $\#\{\text{elliptic points}\} < c\cdot \vol_X(C)$ for some small constant $c < \frac{1}{4\pi} $. Now we will first show a bound of $\#\{\text{elliptic points}\}$ for a fixed number $R>0$, and later we can see how to pick $R$ in terms of the genus $g$. 
	\begin{substep}
		an upper bound of numbers of elliptic points on $C$ given by elliptic complex lines

		Consider all elliptic complex lines $\{L_j\}_j$ in $X$ that intersect $C$. Then by Theorem~\ref{thm:tubular_mult_bound} when $D$ is large we have:
		\[
			\#\{\text{elliptic points given by elliptic complex lines}\} \le \sum_j(C\cdot L_j) \le \frac{\vol_X(C)}{\frac{\pi}{6}\sinh^2\left(\frac{r}{2}\right)}.
		\]
	\end{substep}
	\begin{substep}
		an upper bound of numbers of elliptic points on $C$ given by isolated elliptic points

		By Theorem~\ref{thm:point_mult_bound} and Proposition~\ref{prop:repulsion}, when $D$ is large, the balls of radius $r$ centered at isolated elliptic points are disjoint, and thus
		\[
			\#\{\text{isolated elliptic points counted by multiplicity}\} \le \frac{\vol_X(C)}{\frac{\pi}{12}\sinh^2\left(\frac{r}{2}\right)}.
		\]
	\end{substep}

	In conclusion, we have 
	\[
		\frac{1}{4\pi}\vol_X(C) \le 2g-2+ \frac{\vol_X(C)}{\frac{\pi}{18}\sinh^2\left(\frac{r}{2}\right)}
	\]
	and when $\sinh^2\left(\frac{r}{2}\right) > 72$, we obtain an upper bound of $\vol_X(C)$. 
\end{step}

\begin{step}
	In the meantime, consider any point $p$ on $C$. By Theorem~\ref{thm:point_mult_bound}, we have a lower bound of $\vol_X(C)$ in terms of the injectivity radius of this point. We want this lower bound to be larger than the upper bound we just obtained when $D$ is large, which will imply that such $C$ cannot exist. If a point on $C$ with injectivity radius $r=R_1$ can lead to such a contradiction, it suffices to find a point on $C$ whose distance to any isolated elliptic points is greater than $2R_1$ and to any elliptic lines is greater than $R_1$ when $D$ is big by Proposition \ref{prop:inj_radius_lox}, \ref{prop:inj_radius_ell} and \ref{prop:inj_radius_ell_line}.

	Suppose we cannot, i.e. the whole curve $C$ is contained in a union of hyperbolic balls of radius $2R_1$ and tubes of radius $R_1$. $C$ is compact so this cover can be finite. Denote the centers of these balls $x_1,\cdots,x_{m_1}$ and complex lines for these tubes $L_1,\cdots, L_{m_2}$. The relative versions of the volume inequalities (Proposition~\ref{prop:point_relative_vol} and \ref{prop:tubular_relative_vol}) give that for some $R_2>R_1$ we have
	\begin{align*}
	 	\vol(C\cap B_{x,2R_2})&\ge 26 \vol(C\cap B_{x,2R_1}) \\
	 	\vol(C\cap W_{L,R_2})&\ge 26 \vol(C\cap W_{L,R_1})
	\end{align*} 
	By Proposition~\ref{prop:repulsion} and \ref{prop:repulsion_lines}, when $D$ grows we can make sure each point in the union for $R_2$ is covered by at most 1 ball and 24 tubes. Then, we have 
	\begin{align*}
		26\vol_X(C)&\le 26(\sum_{j=1}^{m_1}\vol(C\cap B_{x_j,2R_1})+\sum_{j=1}^{m_2}\vol(C\cap W_{L_j,R_1})) \\
		&\le \sum_{j=1}^{m_1}\vol(C\cap B_{x_j,2R_2})+\sum_{j=1}^{m_2}\vol(C\cap W_{L_j,R_2}) \\
		&\le 25\vol_X(C)
	\end{align*}
	which is a contradiction. Therefore, there is no curves of genus $g$ when $D$ is large if this curve is not an elliptic complex line. 
\end{step}

\begin{step}
	When $C$ is a component of an elliptic complex line, this is similar: we still have (\ref{eq:schwarzlem}) except there is a factor of $1/2$ on the right hand side because of the generic quotient singularity of the line, and the order of the elliptic points might be different, but it is bounded by the original order. Since intersection points of elliptic complex lines are isolated elliptic points, we only have to consider isolated elliptic points on the curve. Consequently, (\ref{eq:schwarzlem}) becomes:
	\[
		\frac{1}{4\pi}\vol_X(C) \le \frac{1}{2}\Bigg[2g-2+  \frac{\vol_X(C)}{\frac{\pi}{12}\sinh^2\left(\frac{r}{2}\right)} \Bigg].
	\]
	and gives an upper bound of $\vol_X(C)$ for large $D$. For the lower bound of $\vol_X(C)$, the same idea applies except that we do not need to consider tubular neighborhood by Proposition~\ref{prop:repulsion_lines}.  \qed
\end{step}

\subsection{Moduli interpretation}\label{sec:moduli_interpretation}

In this section we will prove Theorem~\ref{thm:2_ball_moduli_ver}. We will first describe the Hodge structures the compact ball quotient parametrizes and their correspondence with the abelian varieties in the theorem. 

To describe the Hodge structures we want, first note that $n$-balls parametrize $\CC$-Hodge structures: an element $v\in\PP V_-$ defines a 1-dimensional subspace of $V$, and the Hermitian form $h$ polarizes the decomposition $V=\CC v \oplus v^{\perp}$ as it is negative definite on $\CC v$ and positive definite on $v^{\perp}$. This can be extended to an integral Hodge structure using the arithmetic of the lattices as follows. 

Consider a $\ZZ$-lattice $H_\ZZ \cong \OO_F^3 $ of rank 12 equipped with an $\OO_F$-action, so $H_\CC= \CC \otimes_\ZZ H_\ZZ$ can be decomposed into a direct sum of eigenspaces of the $\OO_F$-action $H_\CC\cong H^{\sigma_1}\oplus H^{\bar{\sigma}_1}\oplus H^{\sigma_2}\oplus H^{\bar{\sigma}_2}$ where the $\OO_F$-action on each component is given by the corresponding complex embedding of $F\subset \CC$. For each embedding $\sigma:F\to\CC$, consider $\pi_\sigma: H_\CC=\CC \otimes_\ZZ H_\ZZ \to  \CC_{\sigma} \otimes_{\OO_F}H_\ZZ \cong \CC^3 $ given by identity map, where $\CC_\sigma$ emphisizes that the $\OO_F$ action on $\CC$ is given by $\sigma$. For any embedding $\sigma':F\to\CC$, any $t\in \OO_F$, and $v\in H^{\sigma'} $, $\sigma'(t) \pi_\sigma(v) = \pi_\sigma(\sigma'(t)v) = \pi_\sigma(t\cdot v) = \sigma(t) \pi_\sigma(v)$, which implies that $\pi_\sigma(v)=0$ when $\sigma\ne\sigma'$. Therefore, $\pi_\sigma$ is indeed a projection onto the $\sigma$-eigenspace and it gives a canonical way to identify the $\sigma$-eigenspace. For example, $1\otimes v\in H_\CC= (\sigma_1(v), \bar{\sigma}_1(v), \sigma_2(v), \bar{\sigma}_2(v))$ for $v\in H_\ZZ$.  

Note that under this identification, the complex conjugation in $H_\CC$ is not given by complex conjugation in each eigenspace. In fact, consider $\sum_j b_j\otimes v_j \in H^{\sigma}\subset \CC\otimes H_{\ZZ}$. Then the action of $t\in \OO_F$ gives $\sum_j \sigma(t)b_j \otimes v_j =\sum_j b_j \otimes tv_j$. Taking complex conjugation of the coefficients we can see that $\sum_j \bar{b}_j\otimes v_j$ is in the eigenspace corresponding to $\bar{\sigma}$. Consequently, for $x=(x_1,x_2,x_3,x_4) \in H_\CC\cong H^{\sigma_1}\oplus H^{\bar{\sigma}_1}\oplus H^{\sigma_2}\oplus H^{\bar{\sigma}_2}$, $\bar{x}=(\bar{x}_2,\bar{x}_1,\bar{x}_4,\bar{x}_3)$. 

Now for each $v\in \PP V^{\sigma_1}_-$, we can define a polarized integral Hodge structure on $H_\CC$ of weight $w=-1$ by picking the negative and positive definite parts respectively for each pair of complex embeddings of $F$. Concretely, fix a nonzero element $\alpha\in \OO_F\cap i\RR$ with $\Im(\sigma_1(\alpha)) < 0$. Without loss of generality we assume the hermitian form $-i\alpha h$ is positive definite under $\sigma_2$, and then we define 
\[
	H^{-1,0} = \CC v \oplus \bar{v}^{\perp_{\bar{\sigma}_1}} \oplus 0 \oplus H^{\bar{\sigma}_2},
\]
and $H^{0,-1} = \overline{H^{-1,0}}$ gives
\[
	H^{0,-1} = v^{\perp_{\sigma_1}} \oplus \CC \bar{v} \oplus H^{\sigma_2} \oplus 0,
\]
so we do have the Hodge decomposition $H_\CC = H^{0,-1}\oplus H^{-1,0} $. Now we can polarize this Hodge structure using $\alpha h$: for $x,y \in H_\CC\cong V^{\sigma_1}\oplus V^{\bar{\sigma}_1}\oplus V^{\sigma_2}\oplus V^{\bar{\sigma}_2}$, 
\[
	Q: H_\CC \otimes \bar{H}_\CC \to \CC(1) \ \ \ (x,\bar{y})\mapsto \sum_\sigma \sigma(\alpha) h^{\sigma}(\pi_\sigma(x), \pi_\sigma(y)),
\]
and by construction it does define a polarization on the real structure. To see that it takes integer values on $H_\ZZ$, consider $u,v\in H_\ZZ$, and then $Q(u,\bar{v})=Q(u,v)=\trace_{F/\QQ}(\alpha h(u,v))\in \ZZ$. 

Now we summarize the moduli interpretation of our compact quotient of hyperbolic 2-balls as follows. Fix any Hermitian form $h:F^3\times F^3\to F$ with coefficients in $\OO_F$ such that it is of signature $(2,1)$ under the first pair of complex embeddings and definite under the other pair of embeddings, and also fix a nonzero element $\alpha\in \OO_F\cap i\RR$ with $\Im(\sigma_1(\alpha)) < 0$. Then for any $v\in \BB^2 $ we have constructed an integral Hodge structure for a $\ZZ$-lattice $H_\ZZ\cong \OO_F^3$ of rank 12, polarized by $Q$ defined above. Since polarized integral Hodge structures of weight $-1$ correspond to abelian varieties by taking $H_\CC / (H^{0,-1} + H_\ZZ)$, we can state the moduli interpretation in terms of abelian varieties. For an abelian variety $A$ with $\OO_F$-multiplication $\OO_F\xhookrightarrow{} \mathrm{End}(A)$, $H_1(A,\ZZ)$ has an induced $\OO_F$-module structure, and we say that $A$ has a polarization given by a skew-Hermitian form $T$ defined over $F$ if $E(u,v)=\trace_{F/\QQ}(T(u,v))$ is a Riemann form that polarizes $A$. 

\begin{lemma}\label{lem:moduli_space}
	When $D'>D'_0$, the compact quotient $X=\BB^2/ SU(h^{\sigma_1},\OO_F) $ is the moduli space for abelian varieties $A$ of dimension 6 with $\OO_F$-endomorphism that have $H_1(A,\ZZ)\cong \OO_F^3$ and admit a polarization given by $-\alpha h$. 
\end{lemma}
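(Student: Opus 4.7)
The plan is to establish the lemma by constructing mutually inverse bijections between $\Gamma\backslash\BB^2$ and the set of isomorphism classes of abelian sixfolds with the prescribed data, both factoring through polarized integral Hodge structures of weight $-1$ on $H_\ZZ\cong\OO_F^3$ and appealing to the classical equivalence between such Hodge structures and polarized complex abelian varieties.

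\emph{The forward direction} is essentially contained in the discussion preceding the statement. To each $v\in\BB^2$ one attaches the Hodge decomposition $H^{-1,0}=\CC v\oplus\bar v^{\perp_{\bar\sigma_1}}\oplus H^{\bar\sigma_2}$ and $H^{0,-1}=\overline{H^{-1,0}}$ together with the form $Q=\trace_{F/\QQ}\circ(\alpha h)$. First I would check that $Q$ polarizes this Hodge structure: integrality on $H_\ZZ$ is clear from $\alpha h\in\OO_F$, and positivity of $iQ(\cdot,\overline{\cdot})$ on $H^{-1,0}$ is checked summand by summand, combining $\Im\sigma_1(\alpha)<0$ with the negativity of $h^{\sigma_1}$ on $\CC v$, its positivity on $\bar v^{\perp_{\bar\sigma_1}}$, and the definiteness of $h^{\sigma_2}$ on $H^{\bar\sigma_2}$ with the matched sign of $\sigma_2(\alpha)$. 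Riemann's theorem then produces a polarized complex abelian variety $A_v:=H_\CC/(H^{0,-1}+H_\ZZ)$ of dimension $\dim_\CC H^{-1,0}=6$ with the required $\OO_F$-endomorphism action and polarization given by $-\alpha h$.

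\emph{The inverse direction} starts from an abelian variety $A$ of dimension $6$ with $\OO_F$-multiplication, an isomorphism $H_1(A,\ZZ)\cong\OO_F^3$ of $\OO_F$-modules, and a polarization corresponding to $-\alpha h$. Since the $\OO_F$-action commutes with the complex structure on $H_1(A,\RR)$, the Hodge filtration respects the four-fold eigenspace decomposition $H_\CC=\bigoplus_\sigma H^\sigma$. The definiteness of $h^{\sigma_2}$ combined with positivity of the Riemann form forces $H^{\sigma_2}$ and $H^{\bar\sigma_2}$ to sit entirely in $H^{0,-1}$ and $H^{-1,0}$ respectively (the orientation being pinned down by the sign of $\alpha$), while the $(2,1)$-signature of $h^{\sigma_1}$ forces $H^{-1,0}\cap H^{\sigma_1}$ to be a one-dimensional subspace on which $h^{\sigma_1}$ is negative. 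This line is precisely a point $v\in\PP V^{\sigma_1}_-=\BB^2$.

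\emph{The main obstacle} is to verify that the construction descends to a bijection on isomorphism classes and that the residual ambiguity in the choice of framing $H_1(A,\ZZ)\cong\OO_F^3$ is exactly $\Gamma=SU(h^{\sigma_1},\OO_F)$ rather than the a priori larger $U(h^{\sigma_1},\OO_F)$. I would attack this by identifying the group of $\OO_F$-linear Hermitian automorphisms of $(\OO_F^3,h)$ preserving the polarization form $-\alpha h$ on the nose, and using the hypothesis $D'>D'_0$ from (\ref{notation}) — which forbids small non-totally-real units — together with a determinant normalization on the chosen framing, to cut the stabilizer down to determinant $1$. Once this group-theoretic identification is in place, checking $\Gamma$-equivariance of the two constructions and the fact that they invert one another is a routine unwinding of the Hodge-theoretic dictionary.
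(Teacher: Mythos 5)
Your proposal is correct and follows essentially the same route as the paper: build the period map through polarized integral Hodge structures on $\OO_F^3$, invert it via the eigenspace decomposition of $H_\CC$, and then reduce the framing ambiguity from $U(h,\OO_F)$ to $SU(h^{\sigma_1},\OO_F)$ by using $D'>D'_0$ to force $\det T=\pm1$ and replacing $T$ by $-T$ when needed (your ``determinant normalization''). The paper is terser in the forward and inverse directions, where you usefully spell out the summand-by-summand positivity check, but it is more explicit than you are on the final group-theoretic step; the substance is the same.
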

\begin{proof}
	To continue the construction above, note that the associated positive definite Hermitian form on $H^{-1,0}$ is $i^{-1}Q$, so taking its imaginary part we obtain the Riemann form in the language of abelian varieties, which is equal to $\trace_{F/\QQ}(-\alpha h)$ if we restrict the pairing to the lattice. 

	For any abelian variety $A$ of dimension 6 with $\OO_F$-endomorphism whose $H_1(A,\ZZ)\cong \OO_F^3$ under the induced $\OO_F$-action, the $\OO_F$-action extends to $H_\CC = H_1(A,\ZZ) \otimes_\ZZ \CC$ and gives an eigenspace decomposition of $H_\CC$. We then further assume that $A$ admit a polarization given by $-\alpha h$. By the assumption on the signature of $h$, the projection of its $H^{-1,0}$ onto the $\sigma_1$-eigenspace is 1-dimensional and determines an element $v\in\BB^2$. 

	Thus, $\BB^2$ parametrizes this type of abelian varieties with a choice of frames $H_1(A,\ZZ)\cong \OO_F^3$. For two choices of $H_1(A,\ZZ)\cong \OO_F^3$, the transition matrix $T$ preserves $-\alpha h$ and, therefore, the hermitian form $h$, i.e. $T\in U(h)$. Since we are considering the basis of $\OO_F^3$, the entries of $T$ has to be in $\OO_F$. By our assumption $D'>D'_0$ on the field $F$, $|\det T|=1$ means $\det T=\pm 1$. When $\det T=-1$, note that $-1$ acts trivially and $-T$ has determinant 1. Thus, for two elements in $\BB^2$ parametrizing the same abelian variety, the transition matrix between them for some choices of frames can be chosen in $SU(h,\OO_F)$. Therefore, $\BB^2/SU(h^{\sigma_1},\OO_F) $ is the moduli space for the type of abelian varieties described here. 
\end{proof}

Before we restate Theorem~\ref{thm:2_ball_geometry_ver}, note that the existence of a polarization given by a skew-Hermitian form defined over $F$ is automatic for an abelian variety with $\OO_F$-action:

\begin{lemma}
	An abelian variety $A$ of dimension 6 over $\CC$ with $\OO_F$-endomorphism admits a polarization given by a skew-Hermitian form defined over $F$.
\end{lemma}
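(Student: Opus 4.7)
The plan is to apply the standard trace-descent construction: starting from an arbitrary polarization $E_0$ on $A$, descend it to an $F$-valued skew-Hermitian form via the nondegenerate trace pairing $F\times F\to\QQ$. The key input is that, by positivity of the Rosati involution, any polarization is automatically compatible with the $\OO_F$-action.

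First I would fix any polarization $E_0\colon H_1(A,\ZZ)\times H_1(A,\ZZ)\to\ZZ$, which exists since $A$ is an abelian variety over $\CC$, and extend it $\QQ$-bilinearly to $V:=H_1(A,\QQ)$, a free $F$-module of rank $3$. Let $*$ denote the Rosati involution on $\mathrm{End}^0(A):=\mathrm{End}(A)\otimes\QQ$ defined by $E_0(\phi u,v)=E_0(u,\phi^* v)$, and consider its restriction to $F\hookrightarrow\mathrm{End}^0(A)$. The Rosati involution is positive: the form $\phi\mapsto\mathrm{Tr}(\phi\phi^*)$ is a positive-definite $\QQ$-bilinear form on $\mathrm{End}^0(A)$, so its restriction to $F$ is a positive multiple of $x\mapsto\mathrm{tr}_{F/\QQ}(xx^*)$. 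By Albert's classification (the complex conjugation is the unique positive involution on a CM field), this forces $*|_{F}$ to be the CM complex conjugation $a\mapsto\bar a$. Hence $E_0(au,v)=E_0(u,\bar a v)$ for all $a\in F$ and $u,v\in V$.

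Next I would descend $E_0$ to an $F$-valued form. By nondegeneracy of the trace pairing $F\times F\to\QQ$, for each pair $(u,v)\in V\times V$ there is a unique element $T(u,v)\in F$ satisfying
\[
\mathrm{tr}_{F/\QQ}\bigl(t\cdot T(u,v)\bigr)=E_0(tu,v)\qquad\text{for all }t\in F.
\]
Then $F$-linearity in the first slot is immediate from the characterization, since $\mathrm{tr}(t\cdot T(au,v))=E_0(tau,v)=\mathrm{tr}(ta\cdot T(u,v))$ for every $t\in F$. For the skew-Hermitian identity I would compute
\[
\mathrm{tr}(t\cdot T(v,u))=E_0(tv,u)=-E_0(u,tv)=-E_0(\bar t u,v)=-\mathrm{tr}(\bar t\cdot T(u,v))=-\mathrm{tr}(t\cdot\overline{T(u,v)}),
\]
where the final equality uses $\mathrm{tr}_{F/\QQ}(\bar y)=\mathrm{tr}_{F/\QQ}(y)$ for $y\in F$ (the CM involution is a field automorphism of $F$). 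Nondegeneracy of the trace pairing then yields $T(v,u)=-\overline{T(u,v)}$, so $T$ is skew-Hermitian over $F$. By construction $E_0=\mathrm{tr}_{F/\QQ}\circ T$ is a Riemann form polarizing $A$, which is precisely the definition of $A$ admitting a polarization given by the skew-Hermitian form $T$.

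The main obstacle is justifying $*|_{F}=\overline{\,\cdot\,}$; this is where Albert's theorem on positive involutions of CM fields is essential, together with the fact that the Rosati trace form restricts to a positive multiple of $x\mapsto\mathrm{tr}_{F/\QQ}(xx^*)$ on $F$. All subsequent steps are formal manipulations with the trace pairing. An alternative would be to replace $E_0$ by an averaged form manifestly compatible with the $\OO_F$-action, but the Rosati--Albert approach sidesteps any such modification and produces $T$ directly from any given polarization.
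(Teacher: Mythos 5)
Your proposal is essentially the same as the paper's proof: start from an arbitrary Riemann form, use nondegeneracy of $\mathrm{tr}_{F/\QQ}$ to descend it to an $F$-valued form, and deduce the skew-Hermitian identity from the fact that the Rosati involution restricts to complex conjugation on the CM field $F$. You spell out the last step more explicitly (invoking Albert's classification and writing out the trace manipulations), where the paper simply asserts it; note that both arguments quietly take for granted that the Rosati involution of the chosen polarization actually stabilizes the subfield $F\subset\mathrm{End}^0(A)$, which is what makes the positivity/Albert argument applicable.
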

\begin{proof}
	The $\OO_F$-action on $A$ makes $H_1(A,\ZZ)\otimes \QQ$ an $F$-vector space of dimension $2\cdot 6/4=3$. Fix an isomorphism $\eta: F^3\to H_1(A,\ZZ)\otimes \QQ$. Let $E$ be a Riemann form that polarizes $A$. Then any $u,v\in F^3$ define a $\QQ$-linear map $F\to \QQ$, $t\mapsto (\eta^*E)(t\cdot u, v)$. By the nondegeneracy of $\trace_{F/\QQ}$, there is an $h(u,v)\in F$ such that $(\eta^*E)(t\cdot u, v) = \trace_{F/\QQ}(th(u,v))$. Since Rosati involution is the adjoint operator with respect to $E$ and $F$ is a CM field, $h:F^3\times F^3\to F$ is skew-Hermitian with respect to the $F$-action. 
\end{proof}

Now we restate Theorem~\ref{thm:2_ball_geometry_ver} in terms of families of abelian varieties over a complex curve. For a complex curve $C$ and an abelian schemes $f:\mathcal{A}\to C$ with $\OO_F$-endomorphism, $\mathbb{V}:=(R^1f_*\underline{\ZZ})^{\vee}$ has an induced $\OO_F$-module structure. We say that $\mathcal{A}$ is of type ($M$, $T$) for an $\OO_F$-module $M$ and a skew-Hermitian form $T$ defined over $F$, if there exists a global polarization $\mathcal{Q}:\mathbb{V} \times \mathbb{V} \to \underline{\ZZ}$ such that for any $c\in C$ its restriction $\mathcal{Q}_c$ to fiber $\mathbb{V}_c$ gives a Hodge isometry $(\mathbb{V}_c, \mathcal{Q}_c(u,v))\cong(M, \trace_{F/\QQ}(T(u,v)))$. We say that a skew-Hermitian form $h$ over $\CC$ has signature $(m,n)$ if the Hermitian form $i^{-1}h$ has signature $(m,n)$. To apply Lemma~\ref{lem:moduli_space}, note that for any skew-Hermitian form $T$ defined over $F$ that gives a polarization, we always have $nT=-\alpha h$ for some $n\in\ZZ$, $\alpha\in \OO_F\cap i\RR$, and $h$ a Hermitian form over $\OO_F$, so we only need to check the signature of the skew-Hermitian form. 

\begin{theorem}[Theorem~\ref{thm:2_ball_moduli_ver}]
	Fix a skew-Hermitian form $T$ over $F$ such that $T$ is of signature $(2,1)$ under a pair of complex embeddings of $F$ and is definite under the other pair. For any complex curve $C$ of genus $g$, consider abelian schemes $f:\mathcal{A}\to C$ of dimension 6 with $\OO_F$-endomorphism of type ($\OO_F^3$, $T$). Then there exists $D_0(g) >0$ and $D'_0 >0$ such that for $D>D_0(g)$ and $D' > D'_0 $, such type of abelian schemes must be isotrivial.  \qed
\end{theorem}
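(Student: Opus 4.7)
The plan is to translate the family $f:\mathcal{A}\to C$ into a holomorphic map $\phi:C\to X$ and then invoke Theorem~\ref{thm:2_ball_geometry_ver} to rule out nonconstancy. First I would build the period map. The local system $\mathbb{V}=(R^1 f_*\underline{\ZZ})^{\vee}$ carries the $\OO_F$-action induced from $\mathrm{End}(\mathcal{A})$, and the type hypothesis identifies every fiber $(\mathbb{V}_c,\mathcal{Q}_c)$ with the polarized $\OO_F$-lattice $(\OO_F^3,\trace_{F/\QQ}\circ T)$. The Hodge filtration on $\mathbb{V}\otimes\mathcal{O}_C$ cuts out, at each $c$, a negative line in the $\sigma_1$-eigenspace of $\mathbb{V}_c\otimes\CC$, i.e.\ a point of $\BB^2$; trivializing the local system on the universal cover $\pi:\tilde{C}\to C$ produces a holomorphic period map $\tilde\phi:\tilde{C}\to\BB^2$. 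The frame-change argument from the proof of Lemma~\ref{lem:moduli_space} (replacing a transition of determinant $-1$ by its product with $-I$) shows that monodromy takes values in $\Gamma=SU(h^{\sigma_1},\OO_F)$, so $\tilde\phi$ descends to $\phi:C\to X$.

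Next I split on whether $\phi$ is constant. If it is, $\tilde\phi$ has image in a single $\Gamma$-orbit of $\BB^2$, which is discrete, so continuity forces $\tilde\phi$ itself to be constant. The resulting variation of polarized Hodge structures on $\tilde{C}$ is trivial, and by the VHS-to-abelian-variety correspondence $\pi^*\mathcal{A}$ is a constant abelian scheme. Monodromy then factors through the stabilizer in $\Gamma$ of the common period point, which is finite because the cocompact lattice $\Gamma$ acts properly discontinuously on $\BB^2$. Passing to the finite \'etale cover $C'\to C$ corresponding to the kernel of the monodromy realizes $\mathcal{A}$ as a constant family pulled back from a point, establishing isotriviality.

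If instead $\phi$ is nonconstant, its image $\phi(C)\subset X$ is an irreducible complex curve whose normalization receives a surjection from $C$ and therefore has geometric genus $g'\le g$. Taking $D_0(g)$ to be the maximum of the thresholds furnished by Theorem~\ref{thm:2_ball_geometry_ver} for each $g'\in\{0,1,\dots,g\}$, and $D'_0$ the threshold from that theorem, for $D>D_0(g)$ and $D'>D'_0$ no irreducible curve of any such genus can exist in $X$; this contradicts the existence of $\phi(C)$ and leaves only the isotrivial case.

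The main obstacle, and the part deserving the most care, is the first paragraph: producing a bona fide holomorphic map $\phi:C\to X$ rather than merely a map to the moduli stack, and then upgrading constancy of $\phi$ to isotriviality in the classical sense through a finite \'etale base change. Both steps rest on $\Gamma$ being a cocompact arithmetic lattice acting on $\BB^2$ with finite stabilizers — a consequence of Godement compactness and the absence of parabolic elements noted just after Lemma~\ref{lem:transitive_action}.
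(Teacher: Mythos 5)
Your proposal correctly supplies the proof the paper leaves implicit: the statement of Theorem~\ref{thm:2_ball_moduli_ver} is closed with no explicit argument, the work being delegated to Lemma~\ref{lem:moduli_space} and the preceding paragraph identifying a skew-Hermitian $T$ with $-\alpha h$, and constructing the period map on the universal cover, descending by the monodromy landing in $\Gamma$, and invoking Theorem~\ref{thm:2_ball_geometry_ver} to force constancy is exactly the intended route. Your care about the image $\phi(C)$ possibly having geometric genus $g'<g$ and taking the maximum of $D_0(g')$ over $g'\le g$ is a useful precision the paper omits (one can also check from the proof of Theorem~\ref{thm:2_ball_geometry_ver} that $D_0(g)$ may be taken nondecreasing in $g$).
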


\appendix

\bibliographystyle{alpha}  
\bibliography{bibl.bib}

\end{document}